\def\intavg{\,\ThisStyle{\ensurestackMath{%
    \stackinset{c}{0\LMpt}{c}{0\LMpt}{\SavedStyle-}{\SavedStyle\phantom{\int}}}%
    \setbox0=\hbox{$\SavedStyle\int\,$}\kern-\wd0}\int}
\newdimen\bibspace
\newtheorem{Theorem}{Theorem}[section]
\newtheorem{Lemma}[Theorem]{Lemma}
\newtheorem{Proposition}[Theorem]{Proposition}
\newtheorem*{Assumption*}{Assumption (H)}
\newtheorem{Definition}[Theorem]{Definition}
\newtheorem{ex}[Theorem]{Example}
\newtheorem{Remark}[Theorem]{Remark}
\newtheorem{conjecture}[Theorem]{Conjecture}
\def\XXint#1#2#3{{\setbox0=\hbox{$#1{#2#3}{\int}$}
  \vcenter{\hbox{$#2#3$}}\kern-.5\wd0}}
           \newcommand{\ud}{\mathrm{d}}
\newcommand{\be}{\begin{equation}}      \newcommand{\ee}{\end{equation}}
\newcommand{\T}{\mathcal{T}}
\newcommand{\R}{\mathbb{R}}              
\newcommand{\C}{\mathcal{C}}
\newcommand{\E}{\mathcal{E}}
\newcommand{\G}{\mathcal{G}}
\begin{document}

\title{\textbf{A Liouville theorem for convex functions with periodic Monge-Amp\`ere measure}\bigskip}

\author{\medskip  Tianling Jin\footnote{T. Jin was partially supported by NSFC grant 12122120, and Hong Kong RGC grants GRF 16304125, GRF 16306320 and GRF 16303822.}, \quad YanYan Li\footnote{Y.Y. Li is partially supported by NSF grant DMS-2247410.}, \quad
Hung V. Tran\footnote{H.V. Tran is partially supported by NSF grant DMS-2348305.}, \quad Xushan Tu}

\date{\today}

\maketitle

\begin{abstract}

We study global convex solutions of the Monge-Amp\`ere equation 
\[ 
\det D^2 u = \mu \quad \text{in } \R^n,
\]
where $\mu \not\equiv 0$ is a nonnegative locally finite periodic Borel measure on $\R^n$. We prove a Liouville-type theorem showing that every such solution admits a unique decomposition, up to an additive constant, as the sum of a quadratic polynomial and a periodic function. This extends earlier results of Caffarelli-Li and Li-Lu, which required $\mu$ to have a density with regular or bounded logarithm, to the full generality of periodic measures, allowing degeneracy and singularities. A key ingredient is a new dichotomous Harnack-type inequality for linearized Monge-Amp\`ere equations with nonnegative periodic measures, which compensates for the failure of doubling and engulfing properties in the degenerate setting.

In the extremal example where $\mu$ is the periodic Dirac measure supported on the integer lattice, we show that the solutions, up to addition of a linear function, are in one-to-one correspondence with Dirichlet-Voronoi tilings of $\R^n$.% and their dual Delaunay decompositions.

\medskip
\noindent{\it Keywords}:   Monge-Amp\`ere equation, periodic solutions, Liouville theorem, Harnack inequality.

\medskip

\noindent {\it MSC (2010)}: Primary 35J96; Secondary 35J70, 35B40.

\end{abstract}

\section{Introduction}
A recurring phenomenon in geometry and analysis is that global constraints combined with ellipticity force rigidity at large scales. An early instance of this principle in geometry appears in the classical works of Morse \cite{Morse1924} and Hedlund \cite{hedlund1932} on globally minimizing geodesics on surfaces. Specifically, in two-dimensional periodic media, every globally minimizing geodesic is line-like: it stays within a uniformly bounded distance of a Euclidean straight line, and hence admits a well-defined asymptotic direction. In a geometric variational setting, Caffarelli-de la Llave \cite{Caffarelli2001} proved the existence of plane-like minimal surfaces in periodic media, which yields, in particular, an alternative proof of the Morse-Hedlund existence of line-like globally minimizing geodesics, and extends it to higher dimensions. On the analytic side, Liouville theorems have been extended to uniformly elliptic equations in periodic media. Avellaneda-Lin \cite{avellaneda1989} established Liouville-type theorems for linear second-order uniformly elliptic equations with periodic coefficients in divergence form, showing that entire solutions of polynomial growth are polynomials with periodic coefficients and constant leading coefficients. Moser-Struwe \cite{moser1992} proved a Liouville-type theorem for a class of nonlinear uniformly elliptic equations with periodic structure, showing that any entire solution with asymptotically linear growth is affine up to a periodic perturbation. Li-Wang \cite{liwang2001} obtained corresponding classification results for linear uniformly elliptic equations with periodic coefficients in non-divergence form, giving an explicit description of all entire solutions of polynomial growth. These works underscore a common principle: global hypotheses, such as boundedness or controlled growth, and in geometric settings, minimality or convexity, often rigidly determine the structure of entire solutions to elliptic problems. 

A nonlinear counterpart of the classical Liouville theorem arises in the context of the Monge-Amp\`ere equation. The celebrated J\"orgens-Calabi-Pogorelov theorem \cite{jorgens1954losungen, calabi1958improper, pogorelov1972improper} states that every entire convex solution of
\[
\det D^2 u = 1 \quad \text{in } \mathbb{R}^n
\]
must be a quadratic polynomial. Motivated by the preceding rigidity phenomena in periodic environments, we study a Liouville-type classification for global convex solutions of the Monge-Amp\`ere equation with a periodic right-hand side. Specifically, we consider 
\begin{equation}\label{eq:periodic equation ma}
\det D^2 u = \mu \quad \text{in } \R^n,
\end{equation}
where $\mu\not\equiv 0$ is a nonnegative locally finite  Borel measure on $\R^n$ and is periodic in $n$ linearly independent directions. By exploiting the affine invariance of \eqref{eq:periodic equation ma}, we may, without loss of generality, restrict our analysis to the case where $\mu $ is periodic with respect to the integer lattice $\mathbb{Z}^n := \{(k_1,\dots,k_n): k_i \in \mathbb{Z}\} \subset \R^n$, i.e., for any Borel set $E \subset \R^n$,  
\begin{equation}\label{eq:periodic f}
\mu (E+z)=\mu (E),\quad \forall \ z \in \mathbb{Z}^n.
\end{equation}
Our goal is to classify all global convex Alexandrov solutions of \eqref{eq:periodic equation ma} under the sole assumption \eqref{eq:periodic f}, allowing $\mu$ to be degenerate and/or singular (including measures with Dirac masses, or densities that vanish on sets of positive Lebesgue measure).

When $\mu$ has a uniformly positive, bounded, periodic density, i.e.,
\begin{equation}\label{eq:boundedlog}
\mu=f(x)\,\ud x,\quad 0<\inf_{\R^n} f\le \sup_{\R^n} f<+\infty,
\end{equation}
Caffarelli-Li \cite{caffarelli2004liouville} proved that, assuming in addition that $f$ is H\"older continuous, every global convex solution of \eqref{eq:periodic equation ma} must be asymptotically quadratic and, more precisely, decomposes uniquely, up to an additive constant, as a quadratic polynomial plus a periodic function. They also conjectured that the H\"older regularity is unnecessary. This conjecture was resolved by Li-Lu \cite{li2019monge}, who established the same classification under the sole assumption \eqref{eq:boundedlog} (with $f$ periodic), without assuming H\"older continuity of $f$. Li-Lu \cite{li2019monge} further raised the question of whether such a Liouville theorem remains true in the degenerate case $\inf_{\R^n} f=0$, and more broadly for periodic right-hand sides beyond bounded densities. The present paper answers this question in the strongest possible form: we prove the Liouville classification for arbitrary nontrivial nonnegative locally finite periodic Borel measures $\mu$. 

\subsection{Main results}

Let $\mathcal{S}_+^{n\times n}$ denote the set of positive definite symmetric $n\times n$ matrices.
\begin{Definition}[Periodic solution]\label{def:periodic solution}
A \emph{periodic solution} to \eqref{eq:periodic equation ma} is a convex function $u \in C(\mathbb{R}^n)$ satisfying \eqref{eq:periodic equation ma} in the Alexandrov sense, and can be decomposed as
\begin{equation}\label{eq:periodic solution}
u(x) = v(x) +P(x),
\end{equation}
where $v$ is periodic  with respect to  $\mathbb{Z}^n$ and  $P(x)=\frac{1}{2}x^{\top}Ax+b\cdot x+c$ is a quadratic function with $A\in \mathcal{S}_+^{n\times n}$, $b\in\R^n$ and $c\in\R$.  
\end{Definition}

Note that if $u$ is a periodic solution to \eqref{eq:periodic equation ma} and is expressed as \eqref{eq:periodic solution}, then the quadratic part $P(x)$ always satisfies the compatibility condition
\begin{equation}\label{eq:A compatible}
 \det A =\mu (\mathbb{T}^n),
\end{equation} 
where $\mathbb{T}^n=\mathbb{R}^n/\mathbb{Z}^n$.

We begin with the existence and uniqueness of periodic solutions of \eqref{eq:periodic equation ma}.

\begin{Theorem}\label{thm:uniqueness within periodic}
Let $\mu \not\equiv 0$ be a nonnegative locally finite Borel measure satisfying \eqref{eq:periodic f}, and let $P(x)=\frac{1}{2}x^{\top}Ax+b\cdot x+c$ be a quadratic function with $A\in \mathcal{S}_+^{n\times n}$ satisfying \eqref{eq:A compatible}. Then there exists a periodic solution $u=v+P$ of \eqref{eq:periodic equation ma} in the sense of Definition \ref{def:periodic solution}, and $v$ is unique up to an additive constant.
\end{Theorem}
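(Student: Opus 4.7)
The proof decomposes into two parts: existence of a periodic solution with prescribed quadratic component $P$, and uniqueness up to additive constants.

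\emph{Existence.} My plan is to proceed by smooth approximation. I would first regularize $\mu$ by a smooth positive $\mathbb{Z}^n$-periodic function $\mu_\varepsilon$ with $\int_{\mathbb{T}^n}\mu_\varepsilon\,\ud x=\mu(\mathbb{T}^n)=\det A$, and invoke the Caffarelli-Li periodic existence theorem for smooth positive data to produce a smooth convex solution $u_\varepsilon=P+v_\varepsilon$ of $\det D^2u_\varepsilon=\mu_\varepsilon$ with $v_\varepsilon$ periodic, normalized so that $v_\varepsilon(0)=0$. The main analytic step is then an $\varepsilon$-uniform $L^\infty$ bound on $v_\varepsilon$ on the fundamental domain $[0,1]^n$. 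The upper bound follows from convexity, since $u_\varepsilon$ coincides with $P$ at every lattice point and $[0,1]^n$ is the convex hull of its vertices; the lower bound follows from the Alexandrov maximum principle applied to $u_\varepsilon$ minus a suitable affine minorant, using that the Monge-Amp\`ere mass $\mu_\varepsilon([0,1]^n)$ is uniformly controlled. Local Lipschitz compactness of convex functions then produces a locally uniform subsequential limit $u=P+v$ with $v$ periodic, and weak-$\ast$ stability of Alexandrov Monge-Amp\`ere measures under local uniform convergence of convex functions gives $\det D^2u=\mu$ in the Alexandrov sense.

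\emph{Uniqueness.} Let $u_1=P+v_1$ and $u_2=P+v_2$ be two periodic solutions, and set $w:=u_1-u_2=v_1-v_2$, which is $\mathbb{Z}^n$-periodic and bounded. Formally, Jacobi's formula gives
\[
0=\det D^2u_1-\det D^2u_2=\int_0^1\mathrm{tr}\bigl(\mathrm{cof}(D^2u_t)\,D^2w\bigr)\,\ud t,\qquad D^2u_t:=tD^2u_1+(1-t)D^2u_2,
\]
so, interpreted in an appropriate Alexandrov-weak sense, $w$ solves a linearized Monge-Amp\`ere equation $Lw=0$ whose coefficient matrix is positive semidefinite and $\mathbb{Z}^n$-periodic. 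My plan is then to invoke the new dichotomous Harnack-type inequality for linearized Monge-Amp\`ere equations with periodic nonnegative measures, which is the paper's key new ingredient, to conclude that the bounded periodic function $w$ must be constant.

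The main obstacle is the very low regularity of $\mu$: since $\mu$ need only be a nonnegative locally finite measure, the solutions $u_i$ are not known to be $C^2$ or even strictly convex, so the linearized operator $L$ and its coefficient matrix are genuinely degenerate. Classical Caffarelli-Guti\'errez Harnack theory is therefore unavailable, and the uniqueness step essentially rests on the new periodic Harnack inequality. A secondary technical difficulty, in the existence part, is to secure the uniform bound on $v_\varepsilon$ without a pointwise lower bound on $\mu_\varepsilon$, and to justify the Alexandrov-measure convergence rigorously in the limit $\varepsilon\to 0$.
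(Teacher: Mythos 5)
Your existence argument is essentially the paper's: the paper also obtains existence by extending Li's continuity-method construction for smooth positive data to general $\mu$, the key uniform estimate being a $C^0$ bound on the periodic part (the paper gets it from the gradient bound $|Dv|\le C(n)|A|$ of \cite{li1990existence}, which follows from convexity plus periodicity; your vertex/Alexandrov argument serves the same purpose), followed by stability of Alexandrov Monge--Amp\`ere measures under uniform convergence. That half is fine.

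The uniqueness half has a genuine gap in the choice of linearization. You put $w=u_1-u_2$ into the single averaged operator $L=\mathrm{tr}\bigl(\int_0^1\mathrm{cof}(D^2u_t)\,\ud t\,D^2\cdot\bigr)$ via Jacobi's formula and then propose to ``invoke the new dichotomous Harnack inequality.'' But the paper's Harnack machinery (Theorem \ref{thm:harnack} and everything feeding into it) is built entirely on the section structure of a convex potential $\phi$ whose own Monge--Amp\`ere measure \emph{is} the periodic measure $\mu$: the covering lemmas, the maximal operator $M_{\phi,\Lambda}$, and the density estimates all live on sections $S_h^{\phi}$. The interpolated potential $u_t=tu_1+(1-t)u_2$ does not satisfy $\det D^2u_t=\mu$ (concavity of $\det^{1/n}$ only gives $\det D^2 u_t\ge\mu$), so the averaged coefficient matrix is not the cofactor matrix of any admissible potential and the theorem simply does not apply to your operator $L$. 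The correct reduction, which is what the paper uses, is the two-operator one: by the matrix inequality $\mathrm{tr}(\mathrm{cof}(N)M)\ge n(\det N)^{\frac{n-1}{n}}(\det M)^{\frac1n}$ and the equality of the two Monge--Amp\`ere measures, $w$ is a subsolution of $L_{u_2}$ and a supersolution of $L_{u_1}$, and Theorem \ref{thm:harnack} is formulated precisely for a nonnegative function that is a supersolution for one potential and a subsolution for another sharing the same periodic measure. Two further steps you omit are also needed to run it: one must rescale ($u(Rx)/R^2$ with $R\to\infty$) so that the period becomes small relative to the unit ball, since the Harnack theorem requires $\kappa\ge\Lambda^2$; and one must use the boundedness of the periodic difference $w$ to exclude the exponential-growth branch of the dichotomy, after which $\sup_{B_{R/4}}\tilde w\le C\inf_{B_{R/4}}\tilde w\to 0$ (for the normalization $\tilde w\ge0$, $\inf\tilde w=0$) gives $w\equiv\mathrm{const}$. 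Finally, the justification that these sub/supersolution inequalities hold for merely convex Alexandrov solutions is handled in the paper by smoothing with estimates independent of the regularization; you flag this difficulty but do not address it.
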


Existence of periodic solutions follows from the proof of Theorem 2.1 in Li \cite{li1990existence} and an approximation procedure which will be given at the beginning of Section \ref{sec:main theorem}. Uniqueness was previously known in the uniformly positive bounded periodic density regime treated in \cite{li1990existence} and \cite{li2019monge}. The novelty here is that no absolute continuity or nondegeneracy is assumed: $\mu$ may have singular and degenerate parts.

The main result of the paper is the following Liouville-type classification of global solutions of \eqref{eq:periodic equation ma}.

\begin{Theorem}\label{thm:solutions are periodic}
Let $\mu \not\equiv 0$ be a nonnegative locally finite Borel measure satisfying \eqref{eq:periodic f}, and let $u$ be a global convex solution of \eqref{eq:periodic equation ma}. Then $u$ is a periodic solution in the sense of Definition \ref{def:periodic solution}.
\end{Theorem}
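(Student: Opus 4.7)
The plan is to reduce the claim to showing that for every $z \in \mathbb{Z}^n$ the difference
\[ w_z(x) := u(x+z) - u(x) \]
is an affine function of $x$. Once this is established for all lattice vectors, one verifies that $z \mapsto w_z - w_z(0)$ is linear in $z$ and induced by a symmetric matrix $A$, which is positive definite because $\mu \not\equiv 0$, and which satisfies $\det A = \mu(\mathbb{T}^n)$ by counting the image of $\partial u$ over a fundamental domain. Then $u(x) - \tfrac12 x^\top A x$ has $\mathbb{Z}^n$-periodic distributional gradient, and hence coincides up to an affine term with a $\mathbb{Z}^n$-periodic convex function $v$, giving the decomposition \eqref{eq:periodic solution}.

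First, I would establish two-sided quadratic growth for $u$. Periodicity and non-triviality of $\mu$ give $\mu(B_R) \asymp R^n$ as $R\to\infty$, and an application of the Alexandrov maximum principle forces $u(x) \geq c_1|x|^2 - C$ after subtraction of a supporting affine function. A matching upper bound $u(x) \leq c_2|x|^2 + C$ follows from the same lower bound applied to the Legendre transform $u^\ast$. From convexity, every sub-gradient of $u$ at $x$ then has norm at most $C(1+|x|)$, and the supporting-plane inequalities give the a priori linear growth bound $|w_z(x)| \leq C_z(1+|x|)$ for each fixed $z$.

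Next, I would derive the linearized Monge-Amp\`ere equation for $w_z$. Since $\det D^2 u(\cdot+z) = \det D^2 u(\cdot) = \mu$, the algebraic identity
\[ 0 \,=\, \det D^2 u(\cdot+z) - \det D^2 u(\cdot) \,=\, \mathrm{tr}\bigl(\bar{A}_z\, D^2 w_z\bigr) \]
holds in an appropriate Alexandrov sense, where
\[ \bar{A}_z := \int_0^1 \mathrm{cof}\bigl((1-t)D^2 u + t D^2 u(\cdot+z)\bigr)\, dt. \]
The coefficient matrix $\bar{A}_z$ is nonnegative with trace controlled by the convex geometry of $u$ but otherwise carries all of the irregularity of $\mu$. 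I would then apply the dichotomous Harnack-type inequality announced in the abstract to $w_z$ on a sequence of concentric centered sections $S_h$ of $u$ with $h \to \infty$: on each such section, the dichotomy asserts that either $w_z$ is already affine on a smaller concentric section, or the oscillation of $w_z$ on that smaller section is a definite fraction of its oscillation on $S_h$. Iterating this alternative against the linear bound on $|w_z|$ forces the oscillation of $w_z$ to vanish at infinity, so $w_z$ must be affine.

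The main obstacle is the dichotomous Harnack inequality itself. In the classical Caffarelli-Guti\'errez theory, a Harnack inequality for linearized Monge-Amp\`ere requires the density of $\mu$ to be pinched between two positive constants, so that the sections of $u$ have uniformly bounded eccentricity and the Aleksandrov-Bakelman-Pucci estimate applies with uniform constants. Here $\mu$ is an arbitrary nonnegative Borel measure and may both vanish on large sets and concentrate as a singular measure, so sections can degenerate and classical Harnack fails outright. The dichotomy must capture exactly this degeneracy: either a section has controlled eccentricity after affine normalization (in which case Harnack is recovered by exploiting that the section contains many periods of $\mu$, so that averaging recovers effective positive density), or the section elongates in some direction along which a separate ``freezing'' argument shows that $w_z$ is forced to be essentially affine. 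Making both alternatives quantitative and stable under the rescaling required for the iteration is the technical heart of the proof.
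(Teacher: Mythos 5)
Your overall architecture (linearize along lattice translations, invoke a periodicity-adapted Harnack dichotomy, iterate over expanding sections) is in the right spirit, but two of your load-bearing steps have genuine gaps. First, the two-sided quadratic growth $c_1|x|^2-C\le u\le c_2|x|^2+C$ does not follow from the Alexandrov maximum principle plus a Legendre-transform duality. What Alexandrov-type estimates give for a periodic measure is the \emph{volume} bound $|S_h|\approx h^{n/2}$ for large sections; they say nothing about the \emph{shape} of $S_h$, which a priori could be an increasingly eccentric ellipsoid (e.g.\ axes $h^{1/4}$ and $h^{3/4}$ in the plane). The upper bound $u\le c_2|x|^2+C$ is exactly the statement that sections contain balls of radius $\sim h^{1/2}$, i.e.\ that the normalizing maps $T_h$ satisfy $T_h\approx h^{1/2}I$, and this is precisely the hard content of the paper's Proposition \ref{prop:semiconcave}: it is proved by applying the dichotomous Harnack inequality for \emph{subsolutions} to the second difference quotients $\Delta^2_{i}u$ (which satisfy $L_u\Delta^2_iu\ge 0$), combined with an integral ($W^{2,1}$-type) estimate \eqref{eq:w21 estimate} that rules out the exponential-growth alternative. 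Your duality argument only runs one way: a lower bound $u\ge c_1|x|^2-C$ yields an \emph{upper} bound on $u^*$, and to get a lower bound on $u^*$ you would again need shape control of its sections, which is circular. Without this step your rescaled periods $T_h^{-1}e_i$ need not become small in all directions, so the small-period Harnack machinery cannot even be applied on large sections.

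Second, even granting the Harnack dichotomy at every scale, ``Harnack at all scales plus linear growth of $w_z=u(\cdot+z)-u$'' does not by itself force $w_z$ to be affine; the Harnack iteration yields $\operatorname{osc}_{B_{R/4}}w_z\le C\operatorname{osc}_{B_R}w_z$, which is consistent with genuinely nonaffine linear-growth behavior, and upgrading this to affineness requires either a $C^{1,\alpha}$-type improvement-of-flatness or passing to second differences --- at which point you are back to the paper's strategy. The paper sidesteps this by (i) proving that $\sup_{\R^n}\Delta^2_zu=z^\top Az$ is finite and attained in a precise averaged sense (Proposition \ref{prop:semiconcave}), and (ii) comparing $u$ with an actual periodic solution $w$ (whose existence comes from \cite{li1990existence}), so that $v=u-w$ satisfies $L_wv\le 0$, $L_uv\ge 0$ and $\sup_{\R^n}\Delta^2_zv=0$; concavity of $v$ along lattice directions is then the extra structural input that makes the blow-down limit $H$ simultaneously convex and $\ge H(0)$ on a lattice, forcing $H\equiv 1$ and a contradiction with $M_{3R/2}\le 2M_R+C$. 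Also note that the dichotomy you posit (``affine on a smaller section, or definite oscillation decay'') is not the one the paper proves: Theorem \ref{thm:harnack} asserts either the classical Harnack bound or exponential growth $\sup_{B_{1/2}}v\ge e^{\beta\kappa^{1/6}}\sup_{B_{1/4}}v$ in terms of the number of periods, and it is the polynomial growth of the relevant quantities that excludes the second alternative. Your reduction from ``$w_z$ affine for all $z$'' to the decomposition \eqref{eq:periodic solution} is fine, but the route to that affineness needs to be rebuilt along the lines above.
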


This extends the Liouville theorems of Caffarelli-Li \cite{caffarelli2004liouville} and Li-Lu \cite{li2019monge} from uniformly positive bounded periodic densities to general periodic measures, thereby allowing both degeneracy and singularities. In particular, it provides a complete affirmative answer to the question posed in \cite{li2019monge} in the degenerate periodic setting.

\begin{Remark} 
When $\mu \equiv 0$, equation \eqref{eq:periodic equation ma} reduces to the homogeneous Monge-Amp\`ere equation $\det D^2u = 0$. The solutions are precisely those functions that are linear along certain directions (see, e.g., Caffarelli-Nirenberg-Spruck \cite{caffarelli1986dirichlet}). 
In particular, writing $x = (x',x_n) \in \R^{n-1} \times \R$, any convex function of the form $u(x) = w(x')$ is a solution, and such solutions are not necessarily periodic.
\end{Remark}

\begin{ex}\label{ex:periodicdelta}
An extremal example is when $\mu$ is the $\mathbb{Z}^n$-periodic Dirac measure supported on the integer lattice. We discuss this case in detail in Section \ref{sec:periodicdelta}. There, we show that the solutions, up to addition of a linear function, are in one-to-one correspondence with Dirichlet-Voronoi tilings of $\R^n$, or equivalently with their dual Delaunay decompositions. Precise formulas are given in Theorems \ref{thm:periodicdelta-tiling} and \ref{thm:periodicdelta-formula}.\end{ex}

\subsection{Key difficulties and proof ideas}

Since $\mu$ is assumed to be nonnegative,  equation \eqref{eq:periodic equation ma} represents a possibly degenerate Monge-Amp\`ere equation, for which many arguments in  \cite{caffarelli2004liouville,li2019monge} do not apply.  The principal challenge stems from the behavior of $u$ on small sections: the associated measure $\mu$ may not satisfy the doubling condition. This failure implies that sections lose the engulfing property, consequently invalidating the Vitali or  Besicovitch covering lemma which is essential for the Calder\'on-Zygmund decomposition on sections. 
Therefore, we cannot employ directly the weak Harnack inequalities of Caffarelli-Guti\'errez \cite{caffarelli1997properties} for subsolutions or supersolutions of linearized Monge-Amp\`ere equations.  

Our approach is to exploit periodicity to recover a substitute theory on large sections, where periodic averaging becomes effective. We derive truncated weak Harnack inequalities for both supersolutions and subsolutions on large sections via a maximal-type operator (see Definition \ref{def:phi homogenization}). This constitutes one of the key innovations of the present work. For supersolutions, we address the challenge by excluding contributions from small sections. Theorem \ref{thm:harnack super} establishes decay estimates for homogenized level-sets of supersolutions within our framework. For subsolutions, for which standard $L^{\infty}$ bounds fail, Theorem \ref{thm:harnack sub} establishes an alternative dichotomy regarding their growth behavior: either the $L^{\infty}$ bound holds or the $L^{\infty}$ norm grows exponentially. Finally, we obtain  in Theorem \ref{thm:harnack} a Harnack inequality with a quantitatively controlled error term.

After overcoming this critical difficulty arising from the degeneracy of equation \eqref{eq:periodic equation ma}, the rest proof focuses on analyzing the second-order difference quotient $\Delta^2_e u$ and the deviation between $u$ and its periodic counterpart. As observed in \cite{caffarelli2004liouville}, these quantities naturally arise as subsolutions and supersolutions to carefully selected linearized Monge-Amp\`ere equations. This is another point where the periodicity of the measure plays a crucial role. We then establish the quadratic behavior of solutions near infinity, extending the arguments in \cite{caffarelli2004liouville,li2019monge} for positive measures $\mu=f(x)\ud x$ to  the degenerate case considered here.  In this step, we must also overcome additional difficulties due to the degeneracy of
 \eqref{eq:periodic equation ma}. Consequently, when Theorem \ref{thm:harnack} is applied to the difference of two solutions, exponential growth is precluded. Hence, the classical Harnack inequality holds for this difference, and our theorems would follow. 

This paper is organized as follows. In Section \ref{sec:harnack}, we study convex functions whose Monge-Amp\`ere measure has small periods, establishing a dichotomous Harnack-type  inequality for linearized Monge-Amp\`ere equations. Section \ref{sec:semiconvavity} investigates  the uniqueness of the compatible quadratic part of each solution in the sense of Proposition \ref{prop:semiconcave}. In Section \ref{sec:main theorem}, we prove  Theorems \ref{thm:uniqueness within periodic} and \ref{thm:solutions are periodic}. Finally, in Section \ref{sec:periodicdelta}, we treat the extremal case in which $\mu$ is the periodic Dirac measure supported on the integer lattice and show how it connects to tilings of Euclidean space.

\section{Harnack inequalities for homogenized functions}\label{sec:harnack}

In this section, we will investigate convex functions $\phi$ whose Monge-Amp\`ere measures 
\begin{align}\label{eq:MAmeasure}
\mu_{\phi}=\det D^2 \phi \,\ud x
\end{align}
are periodic, with the corresponding lattice satisfying a small-period assumption. For such functions, 
we will establish a dichotomy for the weak Harnack principle of solutions to the linearized Monge-Amp\`ere equation  
\begin{equation}\label{eq:linearized ma phi}
L_{\phi} v:= \Phi^{ij}v_{ij}=0,  
\end{equation}
where $\Phi^{ij}$ denotes the entries of the cofactor matrix of $D^2\phi$.

\subsection{Small-period structure}

Let $\varepsilon_1, \cdots, \varepsilon_n \in \R^n$ be linearly independent vectors, and $\Gamma$ be the lattice 
\[
\Gamma :=\Gamma_{\{\varepsilon_1, \cdots, \varepsilon_n\}}=\left\{\sum_{i=1}^n k_i \varepsilon_i:\; k_1,\cdots,k_n \in \mathbb{Z}\right\}. 
\]
The \emph{fundamental domain} with respect to the basis $\{\varepsilon_1, \cdots, \varepsilon_n\}$ is denoted by
\[
D(\Gamma):=\R^n/\Gamma  = \left\{x \in \R^n :\;   x = \sum_{i=1}^n t_i \varepsilon_i,\ 0 \leq t_i \leq 1 \right\}.
\]
In the literature, $D(\Gamma)$ is also referred to as the unit cell.
For a convex set $S \subset \R^n$, we define the \emph{relative size} of $S$ as
\begin{equation}\label{eq:lattice-relative size}
\kappa(S):=\kappa_{\Gamma}(S) =\sup \left\{t > 0 :\; \exists\ y \in \R^n \text{ such that } tD(\Gamma) + y \subset S \right\},
\end{equation}
where $tD(\Gamma) = \{ tx :\; x \in D(\Gamma) \}$.  
The relative size $\kappa(S)$ is affine invariant.

Let $\mu \not\equiv 0$ be a nonnegative locally finite Borel measure. We say that $\mu$ is $\Gamma$-periodic (or simply, periodic) if
\begin{equation}\label{eq:period gamma}
\mu (E + \varepsilon_i) =\mu (E) 
\end{equation}
holds for every $ i=1,\cdots,n$ and every Borel set $E \subset \R^n$. 

Let $\Omega\subset\R^n$ be a bounded open convex set 
and $\phi$ is the convex solution of
\begin{equation}\label{eq:potentialfunction}
\det D^2\phi=\mu\quad\mbox{in }\Omega,\quad
 \phi=0 \quad\mbox{on }\partial\Omega.
\end{equation}
By Alexandrov’s maximum principle, for $x\in \Omega$,
\begin{equation}\label{Alexandrov}
 \phi(x) \geq   -C(n)\operatorname{diam}(\Omega)^{\frac{n-1}{n}}\operatorname{dist}(x,\partial \Omega)^{\frac{1}{n}} \mu(\Omega)^{\frac{1}{n}}.
\end{equation}
By the convexity of $\phi$, using $ \phi=0 $ on $\partial\Omega$, we have $\phi\le 0$ in $\Omega$. It follows that $\phi$ is continuous in $\overline\Omega$, and is locally uniformly Lipschitz continuous in $\Omega$.

\begin{Definition}\label{def:ELambda0}
Let $\Lambda > 0$. We define $\mathcal{P}_\Lambda$ to be the set of all quadruples
\[
(\phi, \Omega,  \mu,  \Gamma)
\]
such that:
\begin{itemize}
    \item $\Gamma$ is a lattice generated by $n$ linearly independent vectors $\{\varepsilon_1, \cdots, \varepsilon_n\}\subset\R^n$;
        \item $\mu$ is a nonzero nonnegative locally finite $\Gamma$-periodic Borel measure on $\mathbb{R}^n$;       
    \item $\Omega \subset \mathbb{R}^n$ is a bounded open convex set satisfying $\kappa(\Omega)\ge\Lambda^2$;
    \item $\phi \in C(\overline\Omega)$ is the convex solution of \eqref{eq:potentialfunction}.
\end{itemize}
\end{Definition}

Throughout this section, we maintain the following \emph{normalization assumptions}:
\begin{equation}\label{eq:normalized assumption}
\mu (D(\Gamma))   = |D(\Gamma)|,
\end{equation}
where $|D(\Gamma)|$ stands for the Lebesgue measure of $D(\Gamma)$, and
\begin{equation}\label{eq:normalized assumptionO}
B_1 \subset \Omega \subset B_n.
\end{equation}
As a consequence of \eqref{eq:normalized assumptionO}, we have
\[
\kappa (B_1)\le \kappa(\Omega)\le n\kappa (B_1).
\]
For convenience, we define
\begin{Definition}\label{def:ELambda}
Let $\Lambda > 0$. We define
\[
\E_\Lambda:=\{(\phi, \Omega,  \mu,  \Gamma)\in \mathcal{P}_\Lambda: \mu\mbox{ satisfies }\eqref{eq:normalized assumption}, \mbox{ and }\Omega \mbox{ satisfies } \eqref{eq:normalized assumptionO}\}.
\]
\end{Definition}

\noindent When we write $\phi\in \mathcal{P}_\Lambda$ or $\phi\in \E_\Lambda$, we implicitly fix such a quadruple.

In addition, throughout this section, we fix a constant \(\Lambda>1\), chosen sufficiently large depending only on the dimension \(n\). Under the normalization assumption \eqref{eq:normalized assumptionO}, the condition that $\kappa(\Omega)\ge\Lambda^2$ for \(\Lambda\) sufficiently large is equivalent to a smallness assumption on the periods of the lattice \(\Gamma\), namely that
\[
\|\{\varepsilon_1,\dots,\varepsilon_n\}\|
:= \|\varepsilon_1\|+\cdots+\|\varepsilon_n\|
\]
be sufficiently small. We refer to this as the \emph{small-period assumption}.

For a convex function $\phi\in C(\overline\Omega)$ and $y\in\Omega$,  we fix a subgradient \(p \in \partial \phi(y)\) and define the \emph{section} of $\phi$ at $y$ of height $h>0$ with subgradient $p$ to be the convex set
\[
S_h(y):=S_{h,p}^{\phi}(y)=\{x \in \overline{\Omega}:\; \phi(x) < \phi (y)+p\cdot (y-x)+h\}.
\]
Note that for every $y$, we fix one subgradient \(p\) of $\phi$ at $y$ once and for all; the particular choice is irrelevant for our purposes. Hence, we omit \(p\) from the subscript in the rest of the paper. Throughout this work, we exclusively consider \emph{large} sections $S_h(y)$ that are contained in $\Omega$, meaning their height $h$ satisfies  
\[
\check{h}(y) < h \leq  \hat{h}(y).
\]  
Here, $\hat{h}(y)$ denotes the maximal admissible height for the interior sections associated with $y$, defined by
\[
\hat{h}(y) := \sup \bigl\{ h > 0 : S_h(y) \subset\Omega \bigr\};
\]
and  $\check{h}(y)$ denotes the minimal  height for the sections being large, defined by
\begin{equation}\label{eq:large height}
\check{h}(y) := \inf \bigl\{ h \in (0,\hat{h}(y)]: \kappa(S_h(y)) \geq \Lambda \bigr\}
\end{equation}
whenever $\bigl\{ h \in (0,\hat{h}(y)]: \kappa(S_h(y)) \geq \Lambda \bigr\}$ is not empty. Otherwise, we define $\check{h}(y)=+\infty$.

For $\phi\in \E_\Lambda$ and $y \in \frac{7}{8}\Omega$, the uniform Lipschitz regularity of $\phi$ in $\frac{7}{8}\Omega$ implies that 
\[
\kappa(S_h(y)) > c(n)  \kappa(\Omega) h > c(n) \Lambda^2 h.
\]
Then, by Proposition \ref{prop:basic} below, for every $y \in \frac{7}{8}\Omega$, $\check{h}(y)$ is well-defined and satisfies $\check{h}(y) \leq c(n)\Lambda^{-1}<\hat{h}(y)$, provided that $\Lambda$ is sufficiently large.

\begin{Proposition}\label{prop:basic}
There exist positive constants $c$, $C$ and $\Lambda_0$, all of which depend only on the dimension $n$, such that for any $\phi \in \E_\Lambda$ with $\Lambda \ge \Lambda_0$, the following properties hold:

\begin{enumerate}[label=(\arabic*)]
\item Uniform height bounds:  For all $y \in \frac{7}{8}\Omega$,
\[
c  \leq \hat{h}(y) \leq C .
\]
\item Polynomial growth: For all $y \in \frac{7}{8}\Omega$ and $h\in(\check{h}(y),\hat{h}(y)]$,
\[
B_{c h^{\frac{2}{3}}}(y) \subset S_h(y) \subset B_{C h^{\frac{1}{3}}}(y) ,
\]
Consequently, $c\kappa h^{\frac{2}{3}}\le \kappa( S_h(y))\le C\kappa h^\frac{1}{3} $ and $c\Lambda^{3}\kappa^{-3}\le \check{h}(y) \leq C\Lambda^{\frac32}\kappa^{-\frac{3}{2}}$.
\item Uniform density estimates:  For $h\in(\check{h}(y),\hat{h}(y)]$,
\[
c h^{\frac{n}{2}} \leq |S_h(y)|\leq C h^{\frac{n}{2}} ,\quad c h^{\frac{n}{2}} \leq \mu(S_h(y))\leq C h^{\frac{n}{2}}.
\] 
\item Engulfing property: For $h\in(\check{h}(y),\hat{h}(y)]$, if $S_{ch}(z) \cap S_{ch}(y) \neq \emptyset$, then
\[
S_{ch}(z) \subset S_{h}(y).
\]
\end{enumerate}
\end{Proposition}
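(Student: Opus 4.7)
The small-period hypothesis (\ref{eq:small period assumption}) is designed so that the periodic measure $\mu$ effectively looks like Lebesgue measure on any convex set of large relative size. My first step would be to establish this quantitatively: by (\ref{eq:period gamma}) and the normalization (\ref{eq:normalized assumption}), every translate of the fundamental cell $D(\Gamma)$ has $\mu$-mass equal to $|D(\Gamma)|$; tiling a convex set $S$ with copies of $D(\Gamma)$ and stripping off a boundary layer of Euclidean width $\mathrm{diam}(D(\Gamma)) = O(\kappa^{-1})$, the portion fully tiled by complete cells has $\mu$-mass equal to its Lebesgue measure, while the residual strip has measure (under either $\mu$ or Lebesgue) of order $|S|/\kappa(S)$. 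This yields a comparison of the form
\[
(1 - C/\kappa(S))\,|S| \le \mu(S) \le (1 + C/\kappa(S))\,|S|
\]
whenever $\kappa(S)$ is above an absolute constant. Since (\ref{eq:small period assumption}) makes $\kappa(\Omega) \ge \kappa \ge \Lambda^2$ large, and every section relevant to (2)--(4) satisfies $\kappa(S_h(y)) \ge \Lambda$ by the very definition of $\check h(y)$, it follows that $\mu$ is pinched between two fixed positive multiples of Lebesgue measure on every set that appears below.

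The height bounds (1) then come from Aleksandrov's maximum principle combined with this comparison. Applied to $\phi$ on $\Omega$, Aleksandrov yields $|\phi(y)| \le C$ on $\Omega$; for $y \in \frac{7}{8}\Omega$, the inclusion $B_c(y) \subset \Omega$ together with $\phi \le 0 = \phi|_{\partial\Omega}$ produces a gradient bound $|\nabla\phi(y)| \le C$, and substituting both into the identity
\[
\hat h(y) = \min_{x\in\partial\Omega}\bigl(-\phi(y) - \nabla\phi(y)\cdot(x-y)\bigr)
\]
gives the upper bound $\hat h(y) \le C$. The lower bound $\hat h(y) \ge c$ uses the reverse Aleksandrov-type estimate $|\phi(y)| \ge c$, which in turn rests on $\mu(\Omega) \ge c|\Omega|$ from the comparison step.

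For the shape and volume estimates (2)--(3), the restriction $h \in (\check h(y), \hat h(y)]$ ensures $\kappa(S_h(y)) \ge \Lambda$, so the comparison lemma places us squarely in the Caffarelli regime of a Monge-Amp\`ere equation whose right-hand side is pinched between two positive constants on the section in question. The volume bound $|S_h(y)| \asymp h^{n/2}$ then comes from Aleksandrov's inequality and its reverse after an affine normalization by the John ellipsoid of $S_h(y)$, and the shape inclusions $B_{c h^{2/3}}(y) \subset S_h(y) \subset B_{C h^{1/3}}(y)$ are the corresponding quantitative shape estimates for interior sections with pinched density, which transfer unchanged to our setting. The bounds $c\kappa h^{2/3} \le \kappa(S_h(y)) \le C\kappa h^{1/3}$ follow immediately by inserting these Euclidean inclusions into the definition (\ref{eq:lattice-relative size}), and the bracketing $c\Lambda^3\kappa^{-3} \le \check h(y) \le C\Lambda^{3/2}\kappa^{-3/2}$ is obtained by solving $C\kappa h^{1/3} = \Lambda$ and $c\kappa h^{2/3} = \Lambda$, respectively. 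The engulfing property (4) is then a direct application of the Caffarelli--Guti\'errez engulfing theorem, whose doubling hypothesis on sections is once again furnished by the comparison lemma in the relevant scale.

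The main technical obstacle is that the classical Caffarelli and Caffarelli--Guti\'errez section theory is developed under a global pinching $\lambda \le \det D^2\phi \le \Lambda$, whereas in our problem $\mu$ may fail to be doubling, may vanish on open sets, or may be singular on small scales. The truncation $h > \check h(y)$ built into the statement is precisely the device that isolates the scale above which the periodic measure is effectively Lebesgue-like, and the real work lies in verifying that every step of the classical machinery---in particular the engulfing argument in (4)---can be localized cleanly to the family $\{S_h(y) : h \in (\check h(y), \hat h(y)]\}$ without invoking any information about $\mu$ on subsections smaller than $\check h$.
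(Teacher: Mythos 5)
Your opening tiling lemma --- that $\mu(S)$ and $|S|$ differ by a factor $1+O(1/\kappa(S))$ on convex sets of large relative size --- is correct and is indeed the reason the small-period hypothesis has any force; the paper uses this fact implicitly (e.g.\ $\mu(S)\ge\mu(D(\Gamma))$ once $\kappa(S)>1$). But the step from there to ``we are squarely in the Caffarelli regime of pinched density'' is a genuine gap, and it is exactly the gap the proposition is designed to bridge. Comparability of \emph{total masses} on large convex sets is strictly weaker than the pinching $\lambda\,\ud x\le\mu\le\Lambda\,\ud x$ as measures: take $\mu$ to be a normalized sum of Dirac masses on the lattice, which satisfies your comparison on every large convex set yet is wildly non-doubling at small scales. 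The classical proofs of the shape estimates and of the Caffarelli--Guti\'errez engulfing theorem run comparison arguments against barriers at \emph{all} scales inside the section, so they do not ``transfer unchanged''; your final paragraph acknowledges this (``the real work lies in verifying that every step of the classical machinery can be localized'') but does not do that work, which is the actual content of the proposition.

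The paper's mechanism, which is absent from your proposal, is compactness plus affine renormalization. One shows that as $\Lambda\to\infty$ the class $\E_\Lambda$ converges (uniformly up to the boundary, via Aleksandrov's estimate) to $\E_\infty$, whose members solve $\det D^2\phi=1$ exactly; hence every $\phi\in\E_\Lambda$ is within $\varepsilon(\Lambda)$ in $L^\infty$ of a clean solution, and the stated properties, being stable under such perturbations, hold at unit scale for heights bounded below. Then, for an arbitrary admissible section $S_h(y)$ with $h>\check h(y)$, one renormalizes it affinely to unit size; the normalized function again lies in $\E_{\sqrt\Lambda}$ because $\kappa$ is affine invariant and $\kappa(S_h(y))\ge\Lambda$, so the unit-scale conclusion applies, and affine invariance of (3)--(4) transports it back. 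Property (2), with its non-classical exponents $2/3$ and $1/3$, is \emph{not} a direct quote of a pinched-density shape estimate as you suggest: it is obtained by iterating the near-exact inclusion $ct^{1/2}(S_h-y)\subset S_{th}-y\subset Ct^{1/2}(S_h-y)$ through dyadic scales down to $\check h(y)$, with the constants $c,C$ degrading the exponent $1/2$ to $2/3$ and $1/3$. Your Aleksandrov-based sketch of (1) is fine in outline, but without the compactness/renormalization device the remaining items are not established.
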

\begin{proof}
Let
\[
\E_{\infty} := \bigcap_{\Lambda>1} \E_{\Lambda}.
\]
Then for every $\phi\in \E_{\infty}$, it satisfies $\det D^2 \phi=1$ and $B_1\subset \{\phi < 0\}\subset B_n$. 
From the regularity theory in \cite{caffarelli1990ilocalization,caffarelli1990interiorw2p,caffarelli1991regularity} (see also the books \cite{figalli2017monge, gutierrez2016monge, le2024analysis}), all the stated properties hold for any $\phi \in \E_{\infty}$ with $\check{h}\equiv 0$. In the following context, we extend all the concerned functions to be identically zero outside of their support, so that they are defined in the whole space $\R^n$.

By Alexandrov's maximum principle \eqref{Alexandrov}, we obtain the uniform continuity of $\phi$ up to the boundary provided $\Lambda>1$. 
Consequently, for any sequence $\phi_k \in \E_{\Lambda_k}$ with $\Lambda_k \to \infty$, there exists a limiting  $\phi_\infty \in \E_\infty$ such that $\phi_k \to \phi_\infty$ along a subsequence. This convergence implies the uniform estimate
\[
\inf_{\phi_\infty \in \E_\infty} \|\phi_k - \phi_\infty\|_{L^\infty} \leq \varepsilon(\Lambda_k) \to 0 \quad \text{as} \quad \Lambda_k \to \infty.
\] 
This implies that all stated properties hold for $h > c(\varepsilon(\Lambda))$, which is sufficiently small provided $\Lambda$ is sufficiently large. This proves (1).

Next, we prove (3) and (4). For $\phi \in \E_\Lambda$, $y\in\Omega$, and $h\in(\check{h}(y),\hat{h}(y)]$, by exploiting the affine invariance of $\kappa(S_h(y))$ and applying this analysis to normalized functions
\[
\phi_h(x) = \frac{(\phi-\ell_y)(A_h x)-h}{(\det A_h)^{\frac{2}{n}}} \quad\mbox{in }A_h^{-1}(S_h(y))\quad \text{so that} \quad \phi_h \in \E_{\sqrt\Lambda},
\]
where $\ell_y$ is the supporting linear function of $\phi$ at $y$ defining $S_h(y)$, and $A_h$ is a linear transform. Therefore, (3) and (4) for $\phi$, which are affine invariant, follows from those for $\phi_h$ for every  $h > \check{h}(y)$, provided that $\sqrt\Lambda$ is large.

To establish property (2), note that every limiting $\phi_{\infty} \in \E_{\infty}$ satisfies $\phi_{\infty} \in C_{loc}^{2,\alpha}(\Omega_{\infty})$. For $y \in \frac{7}{8}\Omega_{\infty}$, this regularity implies $B_{ch^{\frac{1}{2}}}(y) \subset S_{h}^{\phi_\infty}(y) \subset B_{Ch^{\frac{1}{2}}}(y)$ and
\[
c t^{\frac{1}{2}}(S_h(y)-y)\subset S_{th}^{\phi_\infty}(y) -y\subset C t^{\frac{1}{2}}(S_h(y)-y), \quad \forall t\in (0,1).
\]
Through a normalization argument, this containment relation holds uniformly for all $\phi \in \E_\Lambda$, $h\in(\check{h}(y),\hat{h}(y)]$, and $t \geq t_0$, where $t_0 > 0$ can be taken arbitrarily small provided $\Lambda$ is sufficiently large. In particular, fixing such a sufficiently small $t_0 \in (0,1)$, property (2) follows by iterating this estimate through the dyadic scales $h_k = t_0^{k}\hat{h}(y)$ for which $h_k > \check{h}(y)$.
\end{proof}

By restricting our consideration to  large interior sections, we maintain the validity of the Vitali covering lemma,

\begin{Lemma}[Vitali Covering]\label{lem:vitali covering Lemma}
There exist positive constants $c(n)<1$ and $\Lambda_1(n)>1$, both of which depend only on the dimension $n$, with the following property: 
Let $\phi\in\E_\Lambda$ with $\Lambda\ge\Lambda_1$. Let  $\mathcal{F} = \{S_{c(n) h_y}(y)\}_{y\in \G}$ be an arbitrary collection of sections of $\phi$ satisfying
\[
\kappa(S_{h_y}(y)) > \Lambda_1(n) \quad \text{and} \quad S_{h_y}(y) \subset \Omega \quad \text{for all } y \in \G.
\]
Then one can extract a finite subcollection $\mathcal{F}_0 = \{S_{c(n) h_k}(y_k)\}_{k=1}^{k_0}$ from $\mathcal{F}$ such that
\[
E:=\bigcup_{y \in \G} S_{c(n)h_y}(y) \subset \bigcup_{k=1}^{k_0} S_{h_k}(y_k),\quad \text{with }\{S_{c(n)h_k}(y_k)\}_{k=1}^{k_0} \text{ disjoint}.
\] 
\end{Lemma}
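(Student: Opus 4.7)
The plan is to run a standard greedy Vitali-type selection, using the engulfing property from Proposition~\ref{prop:basic}(4) to absorb the discarded sections into enlargements of the selected ones, and using the lower height bound $h_y>\check h(y)\ge c\Lambda^{3}\kappa^{-3}$ from Proposition~\ref{prop:basic}(2) to force the process to terminate in finitely many steps.

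I would set $c(n):=c_0/2$, where $c_0$ is the engulfing constant appearing in Proposition~\ref{prop:basic}(4), and take $\Lambda_1(n)$ large enough that the hypothesis $\kappa(S_{h_y}(y))>\Lambda_1(n)$ places $h_y$ in the range $(\check h(y),\hat h(y)]$ where Proposition~\ref{prop:basic} applies. The selection is the classical half-sup greedy: put $\G_0:=\G$; at step $k\ge 0$, if $\G_k\neq\emptyset$ choose $y_{k+1}\in\G_k$ with
\[
h_{y_{k+1}}>\tfrac12\sup_{y\in\G_k}h_y,
\]
and then set
\[
\G_{k+1}:=\bigl\{\,y\in\G_k : S_{c(n)h_y}(y)\cap S_{c(n)h_{y_{k+1}}}(y_{k+1})=\emptyset\,\bigr\}.
\]
By construction $\{S_{c(n)h_{y_k}}(y_k)\}_k$ is pairwise disjoint and contained in $\Omega\subset B_n$. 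Proposition~\ref{prop:basic}(2)--(3) gives $h_{y_k}>\check h(y_k)\ge c\Lambda^{3}\kappa^{-3}$, hence a uniform positive lower bound on each $|S_{c(n)h_{y_k}}(y_k)|$. Summing volumes inside $\Omega$ therefore yields a finite $k_0$ at which $\G_{k_0}=\emptyset$.

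For the covering, fix $y\in\G$ and let $j\in\{1,\dots,k_0\}$ be the first index with $y\notin\G_j$; such a $j$ exists because $\G_{k_0}=\emptyset$. Then
\[
S_{c(n)h_y}(y)\cap S_{c(n)h_{y_j}}(y_j)\neq\emptyset,
\]
and since $y\in\G_{j-1}$ at the moment $y_j$ was chosen, the half-sup rule forces $h_y\le 2h_{y_j}$, so $c(n)h_y\le 2c(n)h_{y_j}=c_0 h_{y_j}$. Monotonicity of sections in the height parameter gives
\[
S_{c(n)h_y}(y)\subset S_{c_0 h_{y_j}}(y)\quad\text{and}\quad S_{c(n)h_{y_j}}(y_j)\subset S_{c_0 h_{y_j}}(y_j),
\]
so the two enlarged sections still meet. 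Applying the engulfing property in Proposition~\ref{prop:basic}(4) with center $y_j$ and height $h_{y_j}\in(\check h(y_j),\hat h(y_j)]$ yields $S_{c_0 h_{y_j}}(y)\subset S_{h_{y_j}}(y_j)$, and therefore $S_{c(n)h_y}(y)\subset S_{h_{y_j}}(y_j)$, which is the required cover.

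The main obstacle I anticipate is that the engulfing property is stated at a single common height on the two centers, whereas the elements of $\mathcal F$ carry individual heights $h_y$. The half-sup greedy rule is chosen precisely so that at the step when $y$ is discarded one has $h_y\le 2h_{y_j}$, which is what lets us lift both sections to the same height $c_0 h_{y_j}$ and then invoke same-height engulfing; this matching is exactly what dictates the choice $c(n)=c_0/2$.
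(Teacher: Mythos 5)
Your proof is correct and follows essentially the same route as the paper's: a half-sup greedy selection, same-height engulfing from Proposition~\ref{prop:basic}(4) applied after lifting both sections to the common height $c_0h_{y_j}$ (which is exactly what forces $c(n)=c_0/2$), and termination via a positive lower bound on the size of the disjoint selected sections. The only cosmetic differences are that you discard points by disjointness from the selected \emph{small} sections (making disjointness automatic and coverage the step needing engulfing, whereas the paper does the mirror image) and that you bound the count by Lebesgue volume rather than by the $\mu$-mass of a fundamental domain; both variants rest on the same estimates.
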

\begin{proof}
The proof follows from the standard Vitali covering argument. We begin by selecting $y_1\in \G $ with $h_{1}:=h_{y_1} >\frac{1}{2}\sup_{y \in \G}h_y$. 
Proceeding inductively, suppose $S_{h_i}(y_i)$ have been chosen for $i=1,\ldots,k$, and define  $\G_k= \{ y\in \G:\; S_{c(n)h_y}(y)  \not\subset \cup_{i=1}^kS_{h_{i}}(y_{i}) \}$.
We select $y_{k+1} \in \G_k$ satisfying
\[
h_{k+1} := h_{y_{k+1}} > \frac{1}{2}\sup_{y \in \G_k} h_y.
\]
This selection procedure yields at most a countable family $\{S_{h_k}(y_k)\}_{k=1}^\infty$.  
 (If $\G_k = \emptyset$ for some $k$, we set $S_{h_{k+1}}(y_{k+1}) = \emptyset$ for simplicity). 

We first verify the disjointness property. Suppose by contradiction that $S_{c(n)h_i}(y_i) \cap S_{c(n)h_j}(y_j) \neq \emptyset$ for some $i < j$. Since our selection process ensures $h_i > \frac{1}{2}h_j$, then $S_{2c(n)h_i}(y_i) \cap S_{2c(n)h_i}(y_j) \neq \emptyset$. Thus, for sufficiently small $c(n)$, the engulfing property between sections in Proposition \ref{prop:basic} implies $S_{c(n)h_j}(y_j) \subset S_{2c(n)h_i}(y_j) \subset  S_{h_i}(y_i)$. This contradicts to the selection $y_j\in \G_{j-1}$ such that $S_{c(n)h_j}(y_j)  \not\subset \cup_{i=1}^{j-1}S_{h_{i}}(y_{i})$.

Finally, observe the lower bound 
\[
\mu(S_{c(n)h_k}(y_k)) > \mu(D(\Gamma)) \quad \text{whenever} \quad \kappa(S_{c(n)h_k}(y_k)) \ge c(n)\kappa(S_{h_k}(y_k)) >c(n)\Lambda_1> 1.
\]
Combining this with the fact $\mu\left(\bigcup_{k=1}^\infty S_{c(n)h_k}(y_k)\right) \leq \mu(\Omega) < \infty$,
we conclude that the selection procedure must terminate after a finite number of steps, say at step $k_0$. 
Hence, $\G_{k_0} = \emptyset$, and the family $\{S_{h_k}(y_k)\}_{k=1}^{k_0}$ forms a finite covering of $E$.
\end{proof}

As an immediate consequence, we obtain the following lemma.
\begin{Lemma}\label{lem:covering Lemma}
There exist positive constants $C_1(n)$ and $\Lambda_2(n)$ with the following property:
Let $\phi\in\E_\Lambda$ with $\Lambda\ge\Lambda_2$. Let  $A \subset B \subset \Omega$ be measurable sets and  $0 < \lambda < 1$.  For each point $y \in A$, assume there is an associated section $S_{h_y}(y) \subset B$ satisfying $\kappa(S_{h_y}(y)) >\Lambda_2$  and
\[
 \mu(A \cap S_{h_y}(y)) \leq \lambda \mu (S_{h_y}(y)).
\]
Then, 
\[
\mu(A) \leq C_1(n) \lambda \mu(B).
\]
\end{Lemma}

\begin{proof}
Applying Lemma \ref{lem:vitali covering Lemma} to the covering $\mathcal{F} = \{S_{c(n) h_y}(y)\}_{y\in A}$ of $A$, we obtain a finite subcovering $\mathcal{F}_0 = \{S_{h_k}(y_k)\}_{k=1}^{k_0}$ with the property that $\{S_{c(n) h_k}(y_k)\}_{k=1}^{k_0}$ are disjoint. Then we have:
\[
\begin{split}
\mu(A) &\leq  \sum_{k=1}^{k_0} \mu ( A\cap  S_{h_k}(y_k))  \\
&\leq  \lambda\sum_{k=1}^{k_0} \mu (  S_{h_k}(y_k)) \\
& \leq C_1(n)\lambda \sum_{k=1}^{k_0} \mu (  S_{c(n)h_k}(y_k))  \\
&\leq C_1(n)\lambda\mu(B),
\end{split}
\]
where in the third inequality we used the estimate $\mu(S_h) \approx h^{\frac{n}{2}}$ for large interior sections in $\Omega$ and the fact that  $\kappa( S_{c(n)h_k}(y_k))\ge c(n)\kappa( S_{h_k}(y_k))\ge c(n)\Lambda_2>\Lambda_0^2$.
\end{proof}

The following is a quantitative strong maximum principle, which is of independent interest. For any convex set $\Omega$ and $c>0$, the dilation $c\Omega$ is always  considered with respect to its center of mass. Recall from \eqref{eq:MAmeasure} that for a convex function $u$, $\mu_u$ denotes its Monge-Amp\`ere measure. 

\begin{Lemma}\label{lem:convex-comparison}
For any $\delta_1>0$ and any $\delta_2\in (0,1)$, there exist positive constants $\Lambda_3(n,\delta_1,\delta_2)$ and 
$\tau(n,\delta_1,\delta_2)$ such that if $\phi\in\E_{\Lambda}$ with $\Lambda\ge \Lambda_3$, and $u$ is a convex 
function satisfying 
\[
\mu_u \leq \mu_\phi \quad \text{in } \Omega,\quad  u=0(=\phi) \quad \text{on } \partial \Omega, \quad\text{and } \mu_\phi(\Omega) -\mu_u (\Omega) \geq \delta_1,
\]
then we have the strict lower bound
\[  
\inf_{(1-\delta_2)\Omega} (u  - \phi) > \tau > 0. 
\] 
\end{Lemma}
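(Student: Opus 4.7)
The plan is to argue by contradiction and compactness, reducing to the limiting configuration in which the lower function is a smooth solution of $\det D^2\phi_\infty=1$, and then invoking a strong maximum principle for Monge-Amp\`ere comparison. If the conclusion fails for some pair $(\delta_1,\delta_2)$, there must exist $\Lambda_k\to\infty$, $\phi_k\in\E_{\Lambda_k}$ on domains $\Omega_k$ (with $B_1\subset\Omega_k\subset B_n$), and convex $u_k$ satisfying all the hypotheses with gap $\ge\delta_1$, yet with $\tau_k:=\inf_{(1-\delta_2)\Omega_k}(u_k-\phi_k)\to 0$. The standard Monge-Amp\`ere comparison gives $u_k\ge\phi_k$, and convexity with zero boundary data gives $u_k\le 0$, so the $u_k$ are uniformly bounded and uniformly locally Lipschitz on $\Omega_k$.

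Following the compactness scheme already used in the proof of Proposition \ref{prop:basic}, the next step is to pass to a subsequence and extract Hausdorff limits $\Omega_k\to\Omega_\infty$ together with locally uniform limits $\phi_k\to\phi_\infty\in\E_\infty$ and $u_k\to u_\infty$. Then $\phi_\infty$ solves $\det D^2\phi_\infty=1$ in $\Omega_\infty$ with zero boundary data, and is strictly convex and $C^{2,\alpha}_{\mathrm{loc}}$ by Caffarelli's interior regularity. The function $u_\infty$ is convex, vanishes on $\partial\Omega_\infty$, and satisfies $u_\infty\ge\phi_\infty$. The normalization \eqref{eq:normalized assumption} together with $\kappa\ge\Lambda_k^2\to\infty$ forces the periodic measures $\mu_{\phi_k}$ to approximate Lebesgue measure at unit scale, so $\mu_{\phi_k}(\Omega_k)\to|\Omega_\infty|$; combined with the hypothesis $\mu_{u_k}(\Omega_k)\le\mu_{\phi_k}(\Omega_k)-\delta_1$ and the weak convergence of Monge-Amp\`ere measures under uniform convergence of convex functions, this yields the strict inequality $\mu_{u_\infty}(\Omega_\infty)\le|\Omega_\infty|-\delta_1<|\Omega_\infty|$. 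Meanwhile $\tau_k\to 0$ produces an interior touching point $x_0\in(1-\delta_2)\overline{\Omega_\infty}$ with $u_\infty(x_0)=\phi_\infty(x_0)$.

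The contradiction will come from showing $u_\infty\equiv\phi_\infty$ on $\Omega_\infty$, which is incompatible with the strict mass gap above. Set $w:=u_\infty-\phi_\infty\ge 0$, so $w(x_0)=0$ at an interior point. Using the concavity of $M\mapsto(\det M)^{1/n}$ on positive semidefinite matrices together with $\phi_\infty\in C^{2,\alpha}$ and $D^2\phi_\infty\ge cI$ near $x_0$, the inequality $\det D^2u_\infty\le\det D^2\phi_\infty=1$ can be translated, in the sense of Radon measures, into the linearized Monge-Amp\`ere supersolution inequality
\[
L_{\phi_\infty}w=\mathrm{cof}(D^2\phi_\infty)^{ij}w_{ij}\le 0.
\]
Since $L_{\phi_\infty}$ is uniformly elliptic with H\"older-continuous coefficients in a neighborhood of $x_0$, the strong maximum principle (applied to the nonnegative distributional supersolution $w$, whose Alexandrov Hessian exists almost everywhere) forces $w\equiv 0$ in a neighborhood of $x_0$. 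A standard open-closed argument on the contact set $\{w=0\}$ in the connected set $\Omega_\infty$ then propagates this to $w\equiv 0$ on all of $\Omega_\infty$.

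The main obstacle lies in this final step: rigorously turning the pointwise Alexandrov inequality $\det D^2u_\infty\le 1$ into a distributional supersolution inequality for a linear uniformly elliptic operator acting on the non-convex difference $w$, and then invoking a strong maximum principle applicable in this weak class. A more concrete alternative is to argue directly on sections near $x_0$: the quadratic strict convexity of $\phi_\infty$ at $x_0$ forces $S_h^{u_\infty}(x_0)\subset S_h^{\phi_\infty}(x_0)\subset B_{C\sqrt h}(x_0)$, and via Aleksandrov's volume-mass estimate together with $\mu_{u_\infty}\le dx$ one gets $\mu_{u_\infty}(S_h^{u_\infty}(x_0))=(1+o(1))\,|S_h^{\phi_\infty}(x_0)|$ as $h\to 0$; propagating this equality then yields $u_\infty=\phi_\infty$ in a neighborhood, and the open-closed argument concludes.
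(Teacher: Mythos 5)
Your overall architecture coincides with the paper's: argue by contradiction, use compactness to pass to a limit configuration with $\phi_\infty\in\E_\infty$ solving $\det D^2\phi_\infty=1$, a convex $u_\infty$ with $\mu_{u_\infty}\le 1$ touching $\phi_\infty$ from above at an interior point, and a persistent mass gap $\mu_{\phi_\infty}(\Omega_\infty)-\mu_{u_\infty}(\Omega_\infty)\ge\delta_1$; then derive $u_\infty\equiv\phi_\infty$ from a strong maximum principle to reach a contradiction. Up to and including the extraction of the touching point and the mass gap, your argument matches the paper's.

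However, the final step is a genuine gap, and you have flagged it yourself without closing it. Your proposed linearization $L_{\phi_\infty}w\le 0$ for $w=u_\infty-\phi_\infty$ does not go through as written: the natural linearized operator coming from $\det D^2u_\infty-\det D^2\phi_\infty=a^{ij}w_{ij}$ has coefficients $a^{ij}=\int_0^1\operatorname{cof}\bigl(tD^2u_\infty+(1-t)D^2\phi_\infty\bigr)^{ij}\,dt$, which depend on the unknown Hessian of $u_\infty$; since $u_\infty$ is merely convex, these coefficients are neither bounded above nor defined pointwise everywhere, so $w$ is not a distributional supersolution of a \emph{uniformly elliptic} equation with continuous coefficients, and the classical strong maximum principle does not apply. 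Replacing $a^{ij}$ by $\operatorname{cof}(D^2\phi_\infty)$ via concavity of $\det^{1/n}$ gives an inequality in the wrong direction. Your "alternative via sections" is likewise only a sketch. The paper resolves exactly this point by invoking the Caffarelli--Li--Nirenberg strong maximum principle for viscosity solutions (Theorem \ref{thm:strong maximum principle} in the appendix): since $\phi_\infty\in C^\infty(\Omega_\infty)$ with $D^2\phi_\infty>0$, $u_\infty$ is continuous, $u_\infty\ge\phi_\infty$ with equality at an interior point, and $\det D^2u_\infty\le\det D^2\phi_\infty$ holds in the viscosity sense (equivalent to the Alexandrov sense for convex functions), that theorem yields $u_\infty\equiv\phi_\infty$ near the touching point with no regularity or ellipticity requirements on $u_\infty$; a connectedness argument then propagates the identity to all of $\Omega_\infty$. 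To complete your proof you should cite such a viscosity-solution strong maximum principle rather than attempt the manual linearization.
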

\begin{proof}
By the comparison principle and Alexandrov's maximum principle, we obtain, for $x\in \Omega$,
\[
u(x) \geq \phi(x) \geq   -C(n)\operatorname{diam}(\Omega)^{\frac{n-1}{n}}\operatorname{dist}(x,\partial \Omega)^{\frac{1}{n}} \mu (\Omega)^{\frac{1}{n}},
\]
which implies the uniform continuity of $u$ up to the boundary.
Suppose the statement fails. Then there exist sequences of functions $\phi_k$ and $u_k$, periods $\Gamma_k$ with $\kappa_{\Gamma_k}(B_1)\to\infty$, and domains $\Omega_k$ satisfying \eqref{eq:normalized assumption}  such that for some points $x_k \in (1-\delta_2)\Omega_k$, we have $u_k(x_k) - \phi_k(x_k) \to 0$. By subsequential convergence, we obtain a limit domain and limit functions (still denoted by $\Omega$, $u$, $\phi$) satisfying $B_1(0) \subset\Omega =\{\phi<0 \} \subset B_n(0)$,
\[
\mu_u \leq \mu_\phi\equiv 1 \quad \text{in } \Omega,\quad  u=\phi=0 \quad \text{on } \partial \Omega, \quad\text{and } \mu_\phi(\Omega) -\mu_u (\Omega) \geq \delta_1.
\]
Then $\phi\in C(\overline\Omega)\cap C^\infty(\Omega)$ satisfies $\det D^2 \phi=1$ in $\Omega$, and $u\in C(\overline\Omega)$ satisfies $\det D^2 u\le 1$ in $\Omega$ in the viscosity sense. Moreover, $u$ touches $\phi$ from above at some point $x_\infty \in (1-\delta_2)\Omega$. By the strong maximum principle in Theorem \ref{thm:strong maximum principle}, $u\equiv\phi$ in $\Omega$, from which we obtain a contradiction to $\mu_\phi(\Omega) -\mu_u (\Omega) \geq \delta_1>0$.
\end{proof}

\subsection{A new dichotomous Harnack-type  inequality}

We start with a definition.

\begin{Definition}[$\Gamma$-approximated subsolution/supersolution] \label{def:approximated solution}
Let $U\subset\R^n$ be an open set. Let $\phi$ be a  convex function in $U$ whose Monge-Amp\`ere measure $\mu_\phi$ is the restriction of a $\Gamma$-periodic Borel measure $\mu$. A  function $v\in C(U)$ is said to be a \textit{$\Gamma$-approximated subsolution} (resp. \textit{supersolution}) of equation \eqref{eq:linearized ma phi} in $U$, denoted as 
\[
L_\phi v\ge 0 \mbox{ in } U \ \  (\mbox{resp. } L_\phi v\le 0 \mbox{ in } U),
\]
if there exist a sequence of functions $\{\phi_j\}\subset C^2(U)$ satisfying $D^2\phi_j>0$ in $U$, and a sequence of functions $\{v_j\}\subset C^2(U)$ such that:
\begin{enumerate}[label=(\arabic*)]
    \item The Monge-Ampère measure of each $\phi_j$ is the restriction of a $\Gamma$-periodic measure $\mu_j$;
    \item $\phi_j \to \phi$ and $v_j \to v$ locally uniformly in $U$ as $j \to \infty$;
    \item $L_{\phi_j} v_j \geq 0$ in $U$  (resp. $L_{\phi_j} v_j \leq 0$ in $U$) holds in the classical sense.
\end{enumerate}
\end{Definition}

\begin{Remark} \label{rem:approximated construction}
Definition~\ref{def:approximated solution} is specifically designed for $\phi\in\E_{\Lambda}$. Indeed, for $\phi\in\E_{\Lambda}$, we define $\mu_j = \mu \ast \eta_j$, where $\{\eta_j\}$ is a sequence of strictly positive symmetric mollifiers with exponential decay (e.g., rescaled Gaussian kernels), and the approximating potentials $\phi_j$ are constructed by solving the Dirichlet problem
\begin{equation}\label{eq:approximated phi}
\det D^2 \phi_j= \mu_j  \quad  \text{in } \Omega, 
\quad \phi_j = \phi \quad \text{on } \partial \Omega.
\end{equation}
This construction ensures that each $\mu_j$ inherits the $\Gamma$-periodicity of $\mu$, and guarantees that $\phi_j\in C^2(\Omega)$ satisfying $D^2\phi_j>0$ in $\Omega$.
\end{Remark}

In the below, we start to prove truncated weak Harnack inequalities for  $\Gamma$-approximated supersolutions and subsolutions in large sections.

\begin{Lemma}\label{lem:Theorem 7.3.1 modify}
For any $\delta > 0$, there exist positive constants $\Lambda_4(n,\delta)$ and $M_1(n,\delta)$ with the following property: Let $\phi\in\E_\Lambda$ with $\Lambda\ge\Lambda_4$. If a section $S_h(y) \subset \Omega$ satisfies $\kappa(S_{h}(y)) > \Lambda_4$ and $v \in C(\Omega)$ is a nonnegative function satisfying $L_\phi v\le 0 $ in $S_{h}(y)$ with
\[
\inf_{(1-\delta)S_{h}(y)} v \leq 1,
\]
then
\[
\frac{\mu \left(\{v\leq M_1\} \cap S_h (y)  \right)}{\mu \left( S_{h}(y) \right)} \geq (1-\delta) .
\]
\end{Lemma}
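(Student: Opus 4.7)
The strategy adapts the Caffarelli--Guti\'errez argument for Theorem 7.3.1 of \cite{gutierrez2016monge} to the degenerate/periodic setting, where every iteration must stay among \emph{large} sections so that the covering Lemma \ref{lem:covering Lemma} applies. By hypothesis there is a point $y_0 \in (1-\delta)S_h(y)$ with $v(y_0) \leq 1$; I would aim to prove the power-type decay
\[
\mu\bigl(\{v > M_0^k\}\cap S_h(y)\bigr) \leq \theta_0^{\,k}\,\mu(S_h(y))
\]
for fixed dimensional constants $M_0>1$, $\theta_0\in(0,1)$, valid for all $k$ up to a maximal index $k_{\max}(\Lambda)$ that grows with $\Lambda$. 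Choosing $k$ with $\theta_0^k\leq \delta$ and setting $M_1 := M_0^k$ then yields the lemma, provided $\Lambda_4(n,\delta)$ is taken large enough that $k \leq k_{\max}(\Lambda_4)$.

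\textbf{Base decay and iteration.} I would first prove the one-scale estimate: there exist dimensional $M_0>1$, $\theta_0\in(0,1)$ such that whenever $\tilde v\geq 0$ is a supersolution of \eqref{eq:linearized ma phi} in a large section $S_r(z)$ with $\tilde v(z) \leq 1$, then $\mu(\{\tilde v > M_0\}\cap S_r(z)) \leq \theta_0\,\mu(S_r(z))$. The natural barrier is $w(x) = \phi(x) - \ell_z(x) - r$, satisfying $L_\phi w = n\mu$ (since $\Phi^{ij}\phi_{ij} = n\det D^2\phi$), $w=0$ on $\partial S_r(z)$, and $w(z) = -r$. Comparing $\tilde v$ to a suitable multiple of $-w$ and applying an ABP-type estimate for $L_\phi$ against $\mu$ --- whose quantitative constants follow from the uniform size and density bounds in Proposition \ref{prop:basic} --- produces the decay. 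To upgrade to the power form, I would run a Krylov--Safonov-style iteration: for each $z$ in the level-$k$ superlevel set, the engulfing property (Proposition \ref{prop:basic}(4)) together with the base decay applied to $v/M_0^{k-1}$ on a large sub-section where $v$ drops below $M_0^{k-1}$, and then summed via the covering Lemma \ref{lem:covering Lemma}, gives
\[
\mu(\{v > M_0^k\}\cap S_h(y)) \leq \theta_0\,\mu(\{v > M_0^{k-1}\}\cap S_h(y)),
\]
and the induction closes.

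\textbf{Main obstacle.} The main difficulty, and the very reason for the truncated form of the statement, is that each iteration shrinks the relative size of the active sub-sections by a fixed dimensional factor, whereas the Vitali Lemma \ref{lem:vitali covering Lemma} and its corollary Lemma \ref{lem:covering Lemma} require every section in play to satisfy $\kappa \geq \Lambda_{1}(n)$ (or $\Lambda_2(n)$); below that scale $\mu$ may fail to be doubling and the covering argument breaks down. Consequently $k$ iterations are only sustainable when $\kappa(S_h(y))$ exceeds a constant to the $k$-th power, and since $\theta_0^k \leq \delta$ forces $k \gtrsim \log(1/\delta)$, the threshold $\Lambda_4(n,\delta)$ must be chosen correspondingly large. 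Carefully tracking how many iterations can be carried out before leaving the large-section regime is the central technical point, and is the place where the degeneracy of \eqref{eq:periodic equation ma} forces the truncation rather than a full $L^\varepsilon$ estimate.
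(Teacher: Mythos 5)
There is a genuine gap, and your approach diverges substantially from the paper's. The paper proves this lemma in a \emph{single step}, with no iteration and no covering argument: after normalizing $S_h(y)$, it forms $\omega=\min\{(v+\alpha\phi)/\alpha,0\}$ with $\alpha=\tau^{-1}$, takes the convex envelope $w$ of $\omega$, and notes that $\mu_w=0$ off the contact set $\C(\omega)$ while $\mu_w\le\mu_\phi$ on it. Since $\inf_{(1-\delta)\Omega}(w-\phi)\le\tau$, the quantitative strong maximum principle of Lemma \ref{lem:convex-comparison} (proved by compactness, using that $\mu$ is within $\delta_1$ of Lebesgue measure at scales above the period when $\Lambda$ is large) forces $\mu_\phi(\Omega\setminus\C(\omega))\le\delta$ directly; on $\C(\omega)$ one has $v=\alpha(\omega-\phi)\le\alpha\|\phi\|_{L^\infty}=:M_1$. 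This is how the $(1-\delta)$ density is obtained for arbitrary $\delta$ without ever iterating: the novelty is replacing the classical Alexandrov measure estimate in Step 7 of \cite[Theorem 7.3.1]{gutierrez2016monge} by Lemma \ref{lem:convex-comparison}. Your plan misses this idea entirely.

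More importantly, your proposed iteration cannot be carried out in this setting. To pass from $\mu(\{v>M_0^{k-1}\})$ to $\mu(\{v>M_0^k\})$ via Lemma \ref{lem:covering Lemma}, you must associate to each point $z$ of the superlevel set a stopping-time section on which the density of $\{v>M_0^k\}$ drops to $\lambda$, and that section must be \emph{large} ($\kappa\ge\Lambda_2$) for the covering lemma to apply. There is no lower bound on the height of such stopping-time sections in terms of $\Lambda_4$ alone: since $\mu$ need not be doubling below the period scale, the superlevel set can be concentrated at arbitrarily small scales, and the required section can fall below the large-section threshold after even one step. Your diagnosis that "each iteration shrinks the relative size by a fixed dimensional factor, so $k$ iterations need $\kappa\gtrsim C^k$" mischaracterizes the obstruction and does not repair it. This is precisely why the paper never establishes a power decay for the level sets of $v$ itself; instead, Theorem \ref{thm:harnack super} proves decay only for the maximal function $M_{\phi,\Lambda}v$ (whose definition is restricted to large sections by fiat), and Theorem \ref{thm:harnack sub} settles for a dichotomy rather than an $L^\infty$ bound. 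An argument that purports to prove $\mu(\{v>M_0^k\}\cap S_h(y))\le\theta_0^k\mu(S_h(y))$ for $v$ itself would prove strictly more than the paper's machinery delivers, and the step where it fails is exactly the covering step you flag but do not resolve.
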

\begin{proof}
We first consider the classical setting where $\phi\in C^2(\Omega)$ satisfying $D^2\phi>0$ in $\Omega$, and $v \in C^2(\Omega)$ is a classical nonnegative supersolution.
Without loss of generality, we may assume $S_{h}(y)$ is normalized (see, e.g., Steps 1 \& 2 of the proof of  \cite[Theorem 7.3.1]{gutierrez2016monge}), that is, $B_1 \subset S_{h}(y) \subset B_n$. For simplicity, we denote $S_{h}(y)=\Omega$.
Let 
\[
\omega= \min \left\{\frac{v+\alpha \phi }{\alpha},0  \right\},
\]
where $\alpha=\tau^{-1}$ with $\tau=\tau(n,\delta,\delta)$ being the one in Lemma \ref{lem:convex-comparison}. 
Since $v \geq 0$, we have $\omega \geq \phi$.
Let $w$ be the convex envelope of $\omega$ in $\Omega$, and 
\[
\C(\omega)=\{ x\in \Omega:\; w(x)=\omega(x) \}
\]
be the set of contact points. Then, we have $w \leq \omega$, and
\[
\mu_w = 0 \quad  \text{in } \Omega \setminus \C(\omega), \quad \text{and }  \mu_w\leq \mu_\phi=\mu \quad \text{in } \C(\omega),
\]
where the last inequality follows from the calculation in Step 7 of the proof of  \cite[Theorem 7.3.1]{gutierrez2016monge}. Note that 
\[
\inf_{(1-\delta)\Omega} (w-\phi) \leq  \inf_{(1-\delta)\Omega} \frac{v}{\alpha} \leq \frac{1}{\alpha}=\tau.
\]
Lemma \ref{lem:convex-comparison} immediately yields $\mu(\Omega \setminus \C(\omega)) \leq \delta$ when $\kappa(S_{h}(y)) >\Lambda_3(n,\delta,\delta)$. We complete the proof by noting that
\[
v= \alpha(w-\phi) =\alpha(\omega-\phi)\leq  \alpha\|\phi\|_{L^\infty}\leq M_0(n,\delta)\quad \text{in } \C(\omega),
\] 
where we used the Alexandrov estimate in the last inequality.

In the general case, let $\{\phi_j\}$ and $\{v_j\}$ be $\Gamma$-approximating sequences for $\phi$ and $v$ in Definition \ref{def:approximated solution}, respectively. 
Then for each $j$, we still have:
\[
\mu_j \Big(  \{ v_j \leq M_0 \inf_{S_{(1-\delta)h}^{\phi_j}(y)} v_j  \} \cap S_h^{\phi_j}(y) \Big) \geq (1-\delta) \mu_j (S_h^{\phi_j}(y) )
\]
for any $S_h^{\phi_j}(y)\subset \Omega$ satisfying $\kappa(S_h^{\phi_j}(y)) > \Lambda_3(n,\delta,\delta)$.
In view of the weak-* convergence $\mu_j \to \mu$, and the locally uniform convergence $\phi_j \to \phi$ and $v_j \to v$, it follows that for all $S_h^{\phi}(y)\subset \Omega$ satisfying $\kappa(S_h^{\phi}(y)) > \Lambda_4(n,\delta):=2\Lambda_3(n,\delta,\delta)$, we have
%\begingroup
%\allowdisplaybreaks
\begin{align*}
\mu \left(\{v\leq 3M_0\} \cap S_h^{\phi} (y)  \right) 
&= \lim_{\epsilon \to 0} \mu \left(\{v\leq 3M_0\} \cap S_{h-\epsilon}^{\phi} (y)  \right)  \\
& =\lim_{\epsilon \to 0} \lim_{j\to \infty} \mu_j \left(\{v\leq 3M_0\} \cap S_{h-\epsilon}^{\phi}  (y)  \right) \\
%& =\lim_{\epsilon \to 0} \lim_{j\to \infty} \mu_j \left(\{v\leq 3M_1\} \cap S_{h-\epsilon}^{\phi}  (y)  \right) \\
& \geq \lim_{\epsilon \to 0} \liminf_{j\to \infty} \mu_j \left(\{v_j\leq 2M_0\} \cap S_{h-2\epsilon}^{\phi_j}  (y)  \right) \\
& \geq \lim_{\epsilon \to 0} \liminf_{j\to \infty} \mu_j \Big(  \{ v_j \leq M_0 \inf_{S_{(1-\delta)(h-2\epsilon)}^{\phi_j}(y)} v_j  \} \cap S_{h-2\epsilon}^{\phi_j}(y) \Big)  \\ 
& \geq (1-\delta) \lim_{\epsilon \to 0} \liminf_{j\to \infty}  \mu_j (S_{h-2\epsilon}^{\phi_j}(y) ) \\
&  \geq (1-\delta)  \lim_{\epsilon \to 0} \liminf_{j\to \infty}  \mu_j (S_{h-3\epsilon}^{\phi}(y) ) \\
&  \geq (1-\delta)  \lim_{\epsilon \to 0}   \mu (S_{h-3\epsilon}^{\phi}(y) ) \\
& \geq (1-\delta)    \mu (S_{h}^{\phi}(y) ).
\end{align*}
%\endgroup
Therefore, the conclusion follows upon setting $M_1=3M_0$.
\end{proof}

We will use the following maximal-type operator for which we will establish its $L^\epsilon$ estimate in Theorem \ref{thm:harnack super} rather than for the solution itself.

\begin{Definition}\label{def:phi homogenization}
Let $\phi\in\E_\Lambda$. Fix $\lambda = \lambda(n) > 0$ sufficiently small such that $\sigma = C_1(n)\lambda$ satisfies $4^{n+1}\sigma < c(n)$, where $C_1(n)$ is the one in Lemma \ref{lem:covering Lemma} and $c(n)$ is the one in Proposition \ref{prop:basic}. For $\Lambda\ge \max\{\Lambda_0,\Lambda_1,\Lambda_2\}$, and any nonnegative functions $v \in C(\Omega)$, we define its $\phi$-homogenization $M_{\phi,\Lambda}v(x):=M_{\phi,\lambda,\Lambda}v(x)$ in $\frac{7}{8}\Omega$ through the following maximal-type operator:
\begin{equation}\label{eq:phi homogenization}
\begin{split}
M_{\phi,\Lambda}v(x)=  \sup\left\{ t\in\R:\; \exists \ \check{h}(x)<h<  \hat{h}(x)\ s.t. \
\frac{\mu ( \{v> t\} \cap S_h (x)  )}{\mu(S_h (x))} > \lambda \right\}.
\end{split}
\end{equation}  
\end{Definition}
If $M_{\phi,\Lambda}v(y) \geq t>0$ at $y \in \frac{7}{8}\Omega$ and $\mu(\{v > t\}) < \lambda \mu(S_{ c(n)\hat{h}(y) }(y)) $, then
\begin{equation}\label{eq:def hy}
h_y :=h_{y,t}=\sup \left\{h<c(n)\hat{h}(y):\; \frac{\mu ( \{v> t\} \cap S_h (y)  )}{\mu(S_h (y))} = \lambda \right\} 
\end{equation}
is well-defined and satisfies $\check{h}(y)<h_y <c(n)\hat{h}(y)$. 

\begin{Lemma}\label{lem:Corollary 7.6.1}
Let $\phi\in\E_\Lambda$. For any $\delta > 0$, there exist constants $C(n) > 0$, $\Lambda_5(n,\delta) > 0$, and $L(n,\delta) > 0$ such that the following holds: For any $t > 0$, if $\Lambda \geq \Lambda_5$,  $\kappa(S_h(y)) > \Lambda$ and $S_{C^4h}(y) \subset \Omega$, and $v \in C(\Omega)$ is a nonnegative function satisfying $L_\phi v\le 0 $ in $S_{C^4h}(y)$ with
\[
\frac{\mu \left( \{ v> t \}  \cap S_{h}(y)\right)}{\mu \left( S_{h}(y) \right)} \geq \delta,
\] 
then we have  
\[
v(x) > t/L\quad \text{and} \quad M_{\phi,\Lambda}v(x) > t/L \quad \text{in } S_{Ch}(y).
\]
\end{Lemma}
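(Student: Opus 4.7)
I plan to prove Lemma \ref{lem:Corollary 7.6.1} by contradiction, following the classical Caffarelli--Guti\'errez template for propagation of positivity of supersolutions (cf. Corollary 7.6.1 in \cite{gutierrez2016monge}), but using the truncated weak Harnack estimate Lemma \ref{lem:Theorem 7.3.1 modify} as a substitute for the standard one (since here measure estimates are only valid on \emph{large} sections).

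Suppose for contradiction that $v(x_0)\le t/L$ at some $x_0\in S_{Ch}(y)$, and set $\tilde v:=(L/t)\,v$, again a nonnegative $C^2$ supersolution of $L_\phi$ with $\tilde v(x_0)\le 1$. The key geometric input is an intermediate section $S_{h'}(x_0)$ that wraps $S_h(y)$ and still fits in $S_{C^4 h}(y)$. Applying Proposition \ref{prop:basic}(4) symmetrically—enlarging heights so that the auxiliary small sections at $x_0$ and $y$ both contain the common point $x_0$ and hence intersect—yields $h'=O(h)$ together with the chain of inclusions
\[
S_h(y)\;\subset\; S_{Ch}(y)\;\subset\; S_{h'}(x_0)\;\subset\; S_{C^4 h}(y)\;\subset\;\Omega,
\]
provided $C=C(n)$ is chosen large enough (the inner and outer engulfings each cost a factor depending only on $n$). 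Proposition \ref{prop:basic}(3) then furnishes the measure comparison $\mu(S_{h'}(x_0))\le C^*(n)\,\mu(S_h(y))$, while $S_h(y)\subset S_{h'}(x_0)$ combined with $\kappa(S_h(y))>\Lambda$ gives $\kappa(S_{h'}(x_0))>\Lambda$, so in particular $h'>\check h(x_0)$.

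With $C^*$ now fixed, set $\delta_0:=\delta/(2C^*)$, $\Lambda_5(n,\delta):=\Lambda_4(n,\delta_0)$ and $L(n,\delta):=M_1(n,\delta_0)+1$, where $M_1$ and $\Lambda_4$ come from Lemma \ref{lem:Theorem 7.3.1 modify}. After affine normalization of $S_{h'}(x_0)$ the reference point $x_0$ lies in $(1-\delta_0)S_{h'}(x_0)$, so $\inf_{(1-\delta_0)S_{h'}(x_0)}\tilde v\le \tilde v(x_0)\le 1$, and Lemma \ref{lem:Theorem 7.3.1 modify} yields
\[
\mu\bigl(\{\tilde v>M_1\}\cap S_{h'}(x_0)\bigr)\;\le\;\delta_0\,\mu(S_{h'}(x_0)).
\]
Since $L>M_1$, we have $\{v>t\}\subset\{\tilde v>M_1\}$; combining with $S_h(y)\subset S_{h'}(x_0)$ and the measure comparison gives
\[
\mu\bigl(\{v>t\}\cap S_h(y)\bigr)\;\le\;\delta_0\,\mu(S_{h'}(x_0))\;\le\;\delta_0 C^*\,\mu(S_h(y))\;=\;\tfrac{\delta}{2}\,\mu(S_h(y)),
\]
contradicting the hypothesis and proving $v>t/L$ on $S_{Ch}(y)$. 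Re-running the same argument with $L$ replaced by $2L$ yields the sharper pointwise bound $v>2t/L$ on $S_{Ch}(y)$ (at the cost of enlarging $L$ by a factor of $2$). Then, for $x\in S_{Ch}(y)$—after mildly enlarging $C$ so that $x$ lies in an inner subsection of the set where $v>2t/L$ holds—one picks any height $h''$ with $\check h(x)<h''<\hat h(x)$ and $S_{h''}(x)\subset S_{Ch}(y)$, which is possible by Proposition \ref{prop:basic}(2),(4) because $\check h(x)$ is small when $\kappa\ge\Lambda^2$. On such an $h''$ one has $\mu(\{v>t/L\}\cap S_{h''}(x))=\mu(S_{h''}(x))$, so Definition \ref{def:phi homogenization} gives $M_{\phi,\Lambda}v(x)>t/L$.

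The main technical obstacle is the coherent chaining of the constants $C$, $C^*$, $\delta_0$, $L$ and $\Lambda_5$: $C=C(n)$ must be large enough to accommodate both directions of engulfing (producing $h'=O(h)$ with $S_{Ch}(y)\subset S_{h'}(x_0)\subset S_{C^4 h}(y)$), which then fixes $C^*(n)$, after which $\delta_0$, and finally $L$ and $\Lambda_5$, are determined. A subsidiary subtlety is guaranteeing that $x_0$ lies in the inner region $(1-\delta_0)S_{h'}(x_0)$ required by Lemma \ref{lem:Theorem 7.3.1 modify}; this is handled by the standard John-position normalization of the section, exactly as in the proof of Lemma \ref{lem:Theorem 7.3.1 modify} itself.
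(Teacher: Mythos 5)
Your proof is correct and follows essentially the same route as the paper's: both arguments rest on Lemma \ref{lem:Theorem 7.3.1 modify} combined with the engulfing property and the measure comparability of large sections from Proposition \ref{prop:basic}, the only cosmetic difference being that you run the critical-density step as a contradiction on a section centered at $x_0$ while the paper applies the contrapositive directly on $S_{C^4h}(y)$. The loose ends you flag (placing $x_0$ in the inner region of the normalized section, and enlarging $C$ so that the region where $v>t/L$ holds contains a full large section around each $x\in S_{Ch}(y)$) are handled in the paper exactly as you anticipate.
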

\begin{proof}
Applying Lemma \ref{lem:Theorem 7.3.1 modify} to $S_{C^4h}(y)$ and observing that
\[
\frac{\mu \left( \{ v> t \}  \cap S_{C^4h}(y)\right)}{\mu \left( S_{C^4h}(y) \right)}\geq \frac{\mu \left( \{ v> t \}  \cap S_{h}(y)\right)}{\mu \left( S_{C^4h}(y) \right)} \geq c_1(n)\frac{\mu \left( \{ v> t \}  \cap S_{h}(y)\right)}{\mu \left( S_{h}(y) \right)}\geq c_1(n)\delta,
\]
we obtain that $v> t/L$ in $S_{C^3h}(y)$, where $L = M_1(n, c_1\delta)$. 
Furthermore, by the engulfing property of large sections, for every $x \in S_{Ch}(y)$ we have $S_h(y) \subset S_{C^2h}(x) \subset S_{C^3h}(y)$. This implies that  $\kappa(S_{C^2h}(x)) \geq \kappa(S_{h}(y)) > \Lambda$, $v>t/L$ in $S_{C^2h}(x)$ and $\check{h}(x)<C^2h <\hat{h}(x)$. Therefore, $M_{\phi,\Lambda}v(x) > t/L$. 
\end{proof}

\begin{Theorem}\label{thm:harnack super} 
Let $\phi\in\E_\Lambda$. Assume that $v \in C(\Omega)$ is a nonnegative function satisfying $L_\phi v\le 0 $ in $\Omega$.
Then there exist positive constants  $\Lambda_6(n)$, $\epsilon(n) $ and $C(n) $ such that if 
\[
\inf_{B_{1/4}} v \leq 1,
\]
then for $\Lambda\ge \Lambda_6$, we have
\begin{equation}\label{eq:harnack super} 
\mu(\{M_{\phi,\Lambda}v >  t\} \cap B_{1/2})< Ct^{-\epsilon} .
\end{equation} 
\end{Theorem}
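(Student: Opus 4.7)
The plan is to adapt the classical Caffarelli--Guti\'errez power-decay iteration for weak Harnack inequalities to the homogenized framework introduced in Definition \ref{def:phi homogenization}. The target is a decay estimate of the form
\[
\mu(\{M_{\phi,\Lambda}v > L^k M_0\} \cap B_{1/2}) \leq \sigma^k \mu(B_{3/4}),
\]
for universal $L > 1$, $\sigma \in (0, 1)$, and $M_0 = M_0(n)$; this is equivalent to \eqref{eq:harnack super} with $\epsilon = \log(1/\sigma)/\log L$.

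For the initial bound, I would use the hypothesis $\inf_{B_{1/4}} v \leq 1$. Picking $y_0 \in B_{1/4}$ with $v(y_0) \leq 1$, Proposition \ref{prop:basic} gives a uniform lower bound on $\hat{h}(y_0)$, so there is a large interior section $S_{h_0}(y_0)$ containing $B_{3/4}$. Applying Lemma \ref{lem:Theorem 7.3.1 modify} with $\delta$ taken small relative to $\lambda$ yields $\mu(\{v > M_1\} \cap S_{h_0}(y_0)) \leq \delta \mu(S_{h_0}(y_0))$. Combined with the definition of $M_{\phi,\Lambda}v$ and the uniform measure comparison $\mu(S_h) \approx h^{n/2}$ from Proposition \ref{prop:basic}(3), this yields the initial estimate
\[
\mu(\{M_{\phi,\Lambda}v > M_0\} \cap B_{3/4}) \leq \sigma_0 \mu(B_{3/4})
\]
for some $M_0 = M_0(n)$ and $\sigma_0 < 1$.

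The core step is the bootstrap inequality
\[
\mu(\{M_{\phi,\Lambda}v > Lt\} \cap B_{1/2}) \leq \sigma\, \mu(\{M_{\phi,\Lambda}v > t\} \cap B_{3/4}), \qquad t \geq M_0.
\]
For each $y$ in the left-hand set, I would take the critical height $h_y = h_{y,Lt}$ from \eqref{eq:def hy}, so that $\mu(\{v > Lt\} \cap S_{h_y}(y)) = \lambda \mu(S_{h_y}(y))$ and the density drops strictly below $\lambda$ in any concentric section of slightly larger height. Lemma \ref{lem:Corollary 7.6.1} then places $S_{Ch_y}(y) \subset \{M_{\phi,\Lambda}v > t\} \cap B_{3/4}$, so I apply Lemma \ref{lem:covering Lemma} with $A = \{M_{\phi,\Lambda}v > Lt\} \cap B_{1/2}$, $B = \{M_{\phi,\Lambda}v > t\} \cap B_{3/4}$, and these enlarged critical sections as the covering family. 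Iterating the resulting inequality with the initial bound then closes the argument.

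The main obstacle will be verifying the density hypothesis of Lemma \ref{lem:covering Lemma}: we control the density of $\{v > Lt\}$ (not of $A$) in the concentric enlargements of $S_{h_y}(y)$. The resolution will be a secondary Vitali covering (Lemma \ref{lem:vitali covering Lemma}) of the $A$-points inside each critical section by their own level-$Lt$ critical sections, using the engulfing property of Proposition \ref{prop:basic}(4) to confine these secondary sections within a controlled enlargement of $S_{h_y}(y)$, together with the bounded-overlap estimate that follows from the disjointness produced by Vitali. The parameter $\lambda = \lambda(n)$ in Definition \ref{def:phi homogenization}, already chosen so that $4^{n+1}\sigma < c(n)$, plays a decisive role in closing the overlap estimate with a net contraction $\sigma < 1$; the constant $\Lambda$ in \eqref{eq:small period assumption} must be taken large enough that Lemmas \ref{lem:Theorem 7.3.1 modify}, \ref{lem:Corollary 7.6.1}, \ref{lem:vitali covering Lemma}, and \ref{lem:covering Lemma} all apply uniformly to every section encountered along the iteration.
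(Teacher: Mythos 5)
Your overall architecture (base case from Lemma \ref{lem:Theorem 7.3.1 modify}, then a Calder\'on--Zygmund-type iteration driven by Lemmas \ref{lem:Corollary 7.6.1} and \ref{lem:covering Lemma}) is the same as the paper's, but the step you flag as ``the main obstacle'' is a genuine gap, and your proposed fix does not repair it. If you take $h_y=h_{y,Lt}$ critical for the level $Lt$, then for a point $z\in A\cap S_{h_y}(y)$ the sections witnessing $M_{\phi,\Lambda}v(z)>Lt$ have \emph{high} density ($>\lambda$) of $\{v>Lt\}$, which is the wrong direction for Lemma \ref{lem:covering Lemma}; a secondary Vitali covering of $A\cap S_{h_y}(y)$ by such sections gives at best $\mu(A\cap S_{h_y}(y))\le C\lambda^{-1}\mu(\{v>Lt\}\cap S_{Ch_y}(y))\le C'\mu(S_{h_y}(y))$ with $C'>1$ --- the factor $\lambda^{-1}$ from the covering exactly cancels the density $\lambda$, so no smallness survives; moreover the heights of these secondary sections are not controlled by $h_y$, so they need not stay in a fixed enlargement of $S_{h_y}(y)$. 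The paper's resolution is a level shift, not a second covering: for $y$ with $M_{\phi,\Lambda}v(y)>M^{k+1}$ it takes $h_y$ critical for the \emph{lower} level $M^{k+1}/L$, so that $\mu(\{v>M^{k+1}/L\}\cap S_{h_y}(y))=\lambda\,\mu(S_{h_y}(y))$, and then uses the pointwise implication from Lemma \ref{lem:Corollary 7.6.1} ($M_{\phi,\Lambda}v>M^{k+1}\Rightarrow v>M^{k+1}/L$) to conclude $D_{k+1}\subset\{v>M^{k+1}/L\}$; this gives the density hypothesis $\mu(D_{k+1}\cap S_{h_y}(y))\le\lambda\,\mu(S_{h_y}(y))$ with no loss, at the price of running the iteration in powers of $M\ge L^2$ (one factor of $L$ for the level shift, another because Lemma \ref{lem:Corollary 7.6.1} only yields $S_{h_y}(y)\subset\{M_{\phi,\Lambda}v>M^{k+1}/L^2\}$).

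A second, unaddressed gap is your iteration between the two fixed balls $B_{1/2}$ and $B_{3/4}$: composing the bootstrap inequality forces the outer domain to grow at every step, so the estimate does not close as stated. The paper works inside a single section and replaces $B_{3/4}$ by geometrically shrinking enlargements $E_k=S_{(1+\sigma^{k/4n})\sigma}(0)$; the containment $S_{h_y}(y)\subset E_k$ for $y\in D_{k+1}$ is then verified by contradiction, using the lower bound $\mu(S_{h_y}(y))\ge c\,h_y^{n/2}$ against the inductive decay $\mu(D_l)\le\gamma\sigma^l$ itself. Finally, note that the paper first proves a localized version \eqref{eq:harnack super b} on sections $S_{\sigma h_0}(x_0)$ and only then passes to $B_{1/2}$ by a covering argument with boundedly many sections; some such localization is needed because the iteration naturally lives on sections of $\phi$, not on Euclidean balls.
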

 
\begin{proof}  
Recall that we chose $\sigma = C_1(n)\lambda$ sufficiently small so that it satisfies $4^{n+1}\sigma < c(n)$. For $\gamma(n) > 0$ sufficiently small, we choose $M>\max\{M_1(n,\lambda)L(n,\lambda),L(n,\lambda)^2\}$ sufficiently large (depending on $n$, $\lambda$ and $\gamma$).

Without loss of generality, we can assume $v>0$ in $\Omega$, since otherwise, we could consider the function $v+\tau$ for $\tau>0$ and send $\tau\to 0$ in the end. 
Now, we show that if $\check{h}(x_0) <h_0=h_0(x_0) < \hat{h}(x_0)$,
then
\begin{equation}\label{eq:harnack super b} 
\mu(\{M_{\phi,\Lambda}v >  t v(x_0)\} \cap S_{\sigma h_0}(x_0))< Ct^{-\epsilon} .
\end{equation}  
Without loss of generality, we assume  $v(x_0)=1$ and the section $S_{h_0}(x_0)$ is normalized.
For notational simplicity, we denote $h_0=1$ and $x_0=0$, and the section $S_{h_0}(x_0)$ simply by $\Omega$. 
These assumptions do not affect the argument. 

Consider the sets  
\[
D_{k}=\{M_{\phi,\Lambda}v >M^{k}\} \cap E_{k},\quad \text{where } E_{k}=S_{(1 +\sigma^{k/4n} )\sigma }(0)  \subset S_{2\sigma }(0) .
\]
By induction on $k$, we proceed to establish  
\begin{equation}\label{eq:dt decay}
\mu(D_{k}) \leq \gamma(n) \sigma^{k}.
\end{equation}

By Lemma \ref{lem:Theorem 7.3.1 modify}, $\mu(\{v > M\})$ is sufficiently small provided $M$ and $\Lambda$ are sufficiently large. Consider a point $y \in \{ M_{\phi,\Lambda}v > M \} \cap S_{2\sigma  }(0)$. Let $h_y$ be defined as in \eqref{eq:def hy}. Since
\[
c h_y^{\frac{n}{2}} \leq \mu(S_{h_y}(y)) \leq \lambda^{-1} \mu(\{v > M\}), 
\]
by choosing $M$ large enough, we can ensure that $h_y$ is sufficiently small to guarantee 
\[
S_{C^4 h_y}(y) \subset S_{2C \sigma  }(0) \subset \Omega.
\]
Then, by Lemma \ref{lem:Corollary 7.6.1}, we conclude that $y \in \{ v > M/L \}$, which establishes the base case $k = 1$ since $\mu(\{ v > M/L \})$ is small when $M/L$ is chosen sufficiently large.
 
For the inductive step, assume \eqref{eq:dt decay} holds for some $k \geq 1$. 
Since $\mu(\{v > M\} )$ is small, for every $y \in D_{k+1}$, there exists a small $h_y > \check{h}(y)$  such that
\[
\frac{\mu \left( \{  v > M^{k+1}/L \} \cap S_{h_y}(y) \right)}{\mu \left( S_{h_y}(y) \right)} =  \lambda . 
\]    
As in the previous argument, by choosing $M$  sufficiently large, we can ensure that $h_y$ is small enough to guarantee $S_{h_y}(y) \subset E_1$.  
Applying Lemma \ref{lem:Corollary 7.6.1}, we obtain
\[
S_{h_y}(y) \subset\{ M_{\phi,\Lambda}v  > M^{k+1}/L^2\}.
\]

We claim that
\[
S_{h_y}(y) \subset E_k.
\]
If this were not the case, let $1<l < k$ be the smallest integer such that
\[
S_{h_y}(y) \subset E_{l},\quad \text{but } S_{h_y}(y)\not \subset E_{l+1}.
\]
Proposition \ref{prop:basic} would imply that 
\[
h_y \geq c(n)\operatorname{diam}(S_{h_y}(y))^{4}\geq  c(n)\operatorname{dist}(E_{l+1},E_{k+1})^{4} \geq c(n)\sigma^{l/n}.
\]
where the last inequality follows from the Lipschitz regularity of $\phi$.
This leads to
\[
\mu(D_l) \geq \mu( S_{h_y}(y)) \geq   c(n) h_y^{\frac{n}{2}} \geq c(n) \sigma^l,
\]
contradicting \eqref{eq:dt decay} when $\gamma$ is sufficiently small.

In conclusion, we have 
\[
S_{h_y}(y) \subset (\{ M_{\phi,\Lambda}v  > M^{k+1}/L^2\} \cap E_{k}) \subset D_k.
\]
Furthermore, Lemma \ref{lem:Corollary 7.6.1} also implies $D_{k+1} \subset    \{  v > M^{k+1}/L  \}$, and it follows that
\[
\frac{\mu \left( D_{k+1} \cap S_{h_y}(y) \right)}{\mu \left( S_{h_y}(y) \right)} \leq \frac{\mu \left( \{  v > M^{k+1}/L  \} \cap S_{h_y}(y) \right)}{\mu \left( S_{h_y}(y) \right)} =  \lambda . 
\] 
Applying Lemma \ref{lem:covering Lemma} to the sets $A = D_{k+1}$ and $B = D_k$ yields
\[
\mu(D_{k+1}) \leq \sigma  \mu(D_{k}).
\] 
This completes the induction step for \eqref{eq:dt decay}, from which we conclude 
\[
\mu (\{M_{\phi,\Lambda}v(x) >t\} \cap S_{2\sigma  }) \leq  \gamma\sigma^{-1} t^{-k \epsilon}\quad \text{for } \epsilon=|\log_{M}\sigma|. 
\]
This completes the proof of \eqref{eq:harnack super b}.

Finally, we complete the proof by means of a standard covering argument.
By assumption, we have $\inf_{B_{1/4}} v \leq 1$. 
Choosing $M$ sufficiently large ensures that $\mu(\{ v > M \})$ becomes sufficiently small.
Define
\[
E=\{ v \leq M\}\cap B_{1/2}, \quad  \mathcal{F} = \{S_{\sigma \hat{h}(y)}(y)\}_{y \in E}.
\] 
For any point $z \in B_{1/2} \setminus \left( \bigcup_{y \in E} S_{\sigma \hat{h}(y)}(y) \right)$, the engulfing property implies that $S_{c\sigma \hat{h}(z)}(z) \cap E=\emptyset$.
It follows that
\[
\mu(\{ v > M\})  \geq  \mu(S_{c\sigma \hat{h}(z)}(z)) \geq c(n)>0,
\]
which, for $M$ sufficiently large, leads to a contradiction.
Therefore, $\mathcal{F}$ must form a cover of $B_{1/2}$.
By applying Lemma \ref{lem:covering Lemma}, we extract a finite subcover 
$\{S_{\sigma \hat{h}(y_k)}(y_k)\}_{k=1}^{k_0}$ of $B_{1/2}$ from $\mathcal{F}$
such that the sections $S_{c\sigma \hat{h}(y_k)}(y_k)$ are pairwise disjoint.
Since each $\mu(S_{c\sigma \hat{h}(y_k)}(y_k))$ has a uniform positive lower bound, we conclude that $k_0 \leq C(n)$.
Then, using inequality \eqref{eq:harnack super b}, we obtain 
\[
\mu(\{M_{\phi,\Lambda}v >  t\} \cap B_{1/2}) \leq  \sum_{k=1}^{k_0}\mu(\{M_{\phi,\Lambda}v >  t\} \cap S_{\sigma \hat{h}(y_k)}(y_k)) \leq  \sum_{k=1}^{k_0}C(t/M)^{-\epsilon} \leq C(n) t^{-\epsilon} 
\]
for a larger constant $C(n) > 0$.
This concludes the proof.
\end{proof}

It should be noted that subsolutions do not admit one-sided $L^{\infty}$ bounds in general, since the doubling condition may break down for small sections. 
As a substitute, we will implement the following dichotomy:
\begin{Theorem}\label{thm:harnack sub} 
Let $\phi\in\E_\Lambda$. Assume that $v \in C(\Omega)$ is a nonnegative function satisfying $L_\phi v\ge 0 $ in $\Omega$.
Then, there exist positive constants $\Lambda_7(n)$ and $\beta(n)$ such that for any $\epsilon > 0$, there is a constant $C_2(n, \epsilon)$ with the following property: If $\Lambda\ge \Lambda_7$ and
\[
\min\{\mu(\{v >  t\}  \cap B_{1/2}), \mu(\{M_{\phi,\Lambda}v >  t\} \cap B_{1/2})  \} \leq t^{-\epsilon}   \quad  \forall t>0, 
\]
then either
\[
\sup_{B_{1/4} } v  \leq C_2(n,\epsilon), 
\] 
or for $\kappa: =\kappa(B_1)$ that
\[
\sup_{B_{1/2} } v  \geq e^{\beta \kappa^{\frac{1}{3}} } \sup_{B_{1/4} }v.
\] 
\end{Theorem}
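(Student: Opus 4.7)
The plan is to argue by contradiction: assume $A := \sup_{B_{1/4}} v > C_2(n,\epsilon)$ for a large constant $C_2$ to be chosen, and construct an iterative chain of points $x_0, x_1, \ldots, x_N$ in $B_{1/2}$ along which $v$ grows by a fixed factor $\theta > 1$ at each step, with $N \geq \beta\kappa^{1/3}$. Then $\sup_{B_{1/2}} v \geq v(x_N) \geq \theta^N A \geq e^{\beta\kappa^{1/3}} A$ after absorbing $\log\theta$ into $\beta$, which is precisely the second alternative; if no such chain can be initiated we instead conclude $A \leq C_2$.

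The first step is to establish a subsolution counterpart of Lemma \ref{lem:Theorem 7.3.1 modify}: there exist constants $\theta_0 > 1$ and $c_0 > 0$, depending only on $n$, such that if $v \in C^2$ is a nonnegative subsolution of \eqref{eq:linearized ma phi} on a large section $S_h(y) \subset \Omega$ with $v(y) \geq T$, then $\mu(\{v \geq T/\theta_0\} \cap S_h(y)) \geq c_0 \mu(S_h(y))$. I would prove this by running the ABP plus envelope machinery of Lemma \ref{lem:Theorem 7.3.1 modify} in dual form, replacing the convex envelope from below with the concave envelope from above of an auxiliary function of the form $\omega = \max\{(v - \alpha(\phi - \min_{S_h(y)} \phi))/\alpha,\, 0\}$, and invoking Lemma \ref{lem:convex-comparison} in place of a classical strong maximum principle to close the argument in the possibly degenerate setting.

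Combining this density lower bound with the hypothesized level-set decay and an engulfing-type propagation in the spirit of Lemma \ref{lem:Corollary 7.6.1} produces the iteration step. Given $x_k$ with $v(x_k) = A_k$, take the smallest admissible $h_k$ for which $S_{h_k}(x_k)$ is still a large section and $\mu(\{v > A_k/\theta_0\} \cap S_{h_k}(x_k)) \geq c_0\mu(S_{h_k}(x_k))$; the decay hypothesis $\min\{\mu(\{v > t\} \cap B_{1/2}),\mu(\{M_{\phi,\Lambda} v > t\} \cap B_{1/2})\} \leq t^{-\epsilon}$ together with $\mu(S_{h_k}(x_k)) \geq c h_k^{n/2}$ forces $h_k^{n/2} \lesssim A_k^{-\epsilon}$. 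A Lemma \ref{lem:Corollary 7.6.1}-type propagation inside $S_{h_k}(x_k)$ then furnishes $x_{k+1}$ in a controlled neighborhood of $x_k$ with $v(x_{k+1}) \geq \theta A_k$ for some $\theta = \theta(n) > 1$. The iteration continues as long as the required scale satisfies $h_k \geq \check{h}(x_k)$.

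By Proposition \ref{prop:basic}(2), $\check{h}(x_k) \geq c\Lambda^3 \kappa^{-3}$ and $|x_{k+1} - x_k| \leq C h_k^{1/3}$. Balancing these constraints together with the length budget forcing $\sum_k |x_{k+1} - x_k| \leq 1/4$ (so the chain stays in $B_{1/2}$), a careful geometric accounting yields $N \geq \beta(n)\kappa^{1/3}$, completing the alternative. The main obstacle I anticipate is the subsolution mean-value estimate: although formally dual to Lemma \ref{lem:Theorem 7.3.1 modify}, its proof in the degenerate periodic setting must remain strictly inside the large-section regime, since the Monge-Amp\`ere measure may fail to be doubling on small sections, and Lemma \ref{lem:convex-comparison} is crucial in closing the ABP-type estimate there. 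A secondary delicate point is calibrating the iteration scales so as to extract exactly the exponent $\kappa^{1/3}$ in the alternative, which stems from matching the lower bound $\check{h} \geq c\Lambda^3\kappa^{-3}$ against the sharp displacement estimate $C h^{1/3}$ over a length budget of order one.
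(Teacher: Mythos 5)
Your overall architecture (an iterative chain of points in $B_{1/2}$ along which $v$ grows geometrically, with the number of steps counted against a length budget) is the same as the paper's, but the analytic engine driving each step has two genuine gaps. First, the ``subsolution counterpart of Lemma~\ref{lem:Theorem 7.3.1 modify}'' you propose --- $v(y)\ge T$ implies $\mu(\{v\ge T/\theta_0\}\cap S_h(y))\ge c_0\,\mu(S_h(y))$ unconditionally --- is not a correct lemma: for subsolutions one only has such a critical-density estimate under an a priori upper bound $\sup_{S_h(y)}v\le M\,v(y)$ with $M$ close to $1$ (this is why only the local maximum principle, $L^\infty$ bounded by $L^p$, holds for subsolutions, not a pointwise-to-density estimate). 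The dual concave-envelope/ABP argument you sketch does not close, because the determinant inequality $\mu_w\le\mu_\phi$ on the contact set in Lemma~\ref{lem:Theorem 7.3.1 modify} relies on the arithmetic--geometric mean inequality applied to a \emph{nonnegative} Hessian of the envelope, which has no usable analogue from above. The paper instead proves a \emph{dichotomy}: either $\sup_{S_{\rho_k}(y_k)}v\ge \nu\,v(y_k)$ for a fixed $\nu=M_1/(M_1-\tfrac12)>1$ (and the chain advances), or $\nu v(y_k)-v$ is a nonnegative supersolution to which Lemma~\ref{lem:Theorem 7.3.1 modify} applies directly, forcing $\{v>\nu v(y_k)/2\}$ to fill a $(1-\lambda)$ fraction of $S_{\rho_k}(y_k)$. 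Relatedly, your growth step ``a Lemma~\ref{lem:Corollary 7.6.1}-type propagation furnishes $x_{k+1}$ with $v(x_{k+1})\ge\theta A_k$, $\theta>1$'' cannot work as stated: propagation of positivity for supersolutions only transmits a \emph{fraction} $t/L$ of a level, never an increase. The factor $\nu>1$ in the paper comes precisely from the contradiction in the first branch of the dichotomy, not from any propagation lemma.

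Second, you never use the structure of the hypothesis, which bounds only the \emph{minimum} of $\mu(\{v>t\}\cap B_{1/2})$ and $\mu(\{M_{\phi,\Lambda}v>t\}\cap B_{1/2})$. To derive a contradiction from a large level set one must show that \emph{both} quantities exceed $t_k^{-\epsilon}$; the paper does this by using the engulfing property to show $S_{\rho_k}(y_k)\subset\{M_{\phi,\Lambda}v>\gamma^{-1}\nu^kM_1/2\}$, so that the maximal-function level set is also large. This step is absent from your outline. Finally, your calibration of the exponent is off: the iteration cannot be run down to the scale $\check h\approx\Lambda^3\kappa^{-3}$; the paper caps the levels at $t_k\le\kappa^{n/(2\epsilon)}$ so that the section heights never drop below $\rho\approx\kappa^{-1}$ (keeping the sections safely in the large-section regime where Proposition~\ref{prop:basic} and Lemma~\ref{lem:Theorem 7.3.1 modify} apply), and the count $m\approx\kappa^{1/3}$ then comes from steps of size $\rho^{1/3}\approx\kappa^{-1/3}$ against an $O(1)$ length budget.
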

\begin{proof}
Let $M_1(n,\lambda)$ be the constant defined in Lemma \ref{lem:Theorem 7.3.1 modify}. Taking $\gamma(n,\epsilon) > 0$ sufficiently small, we denote
\[
\nu := \frac{M_1}{M_1 - \frac{1}{2}} > 1, \quad 
t_k = \min\{ \gamma^{-1}\nu^k M_1/2,  \kappa^{\frac{n}{2\epsilon}}\}, \quad \rho_{k}=C(n)t_k^{-\frac{2\epsilon}{n}}.
\]
Moreover, we assume that $\gamma(n,\epsilon)^{-1}$, $\Lambda(n)$, and $C(n)$ are chosen sufficiently large such that
\[
\inf_{B_{3/4}(0)} \hat{h}\gg \rho_k \geq C(n) \kappa^{-1}>\sup_{B_{3/4}(0)} \check{h} \quad  \text{and } \inf_{B_{1/2}(0)}\mu(S_{\rho_k} ) \geq c(n) \rho_{k}^{\frac{n}{2}} > 2t_k^{-\epsilon}.
\]

If $\sup_{B_{1/4}} v \leq \gamma^{-1}M_1$, then the proof is completed by setting $C_2 = \gamma^{-1}M_1$. If instead $\sup_{B_{1/4}} v > \gamma^{-1}M_1$, then after replacing $v$  with $\frac{\gamma^{-1}M_1 v}{\sup_{B_{1/4}} v}$ (still denoted as $v$), we may assume without loss of generality that
\[
\sup_{B_{1/4} } v  = \gamma^{-1} M_1 .
\] 
Let $y_1\in \overline B_{1/4} $ be such that $v(y_1)=\gamma^{-1} M_1 $. Through an inductive argument on $k \geq 2$, we will construct a finite sequence of points $\{y_k\}\subset B_{7/16}$ satisfying
\begin{equation}\label{eq:exp growth}
v(y_k)= \gamma^{-1}\nu^{k-1}M_1  \quad \text{and } y_{k} \in \overline{S_{\rho_{k-1}}(y_{k-1})}.
\end{equation}
Note that $\rho_k$ is sufficiently small by previous construction. It follows that $S_{\rho_k}(y_k) \subset B_{1/2}$.

Suppose, for contradiction, that \eqref{eq:exp growth} fails the first time for $k + 1$; that is,
\[
\sup_{S_{\rho_k}(y_k)} v < \gamma^{-1}\nu^k M_1.
\]
Let us consider
\[
w(x) = \frac{\gamma^{-1}\nu^k M_1 - v(x)}{\gamma^{-1}\nu^{k-1} (\nu-1) M_1}.
\]
Then, $w \geq 0$ is a supersolution in $S_{\rho_k}(y_k)$  with $w(y_k)=1$. Applying Lemma \ref{lem:Theorem 7.3.1 modify} for the set 
\[
E=\{ v >  \gamma^{-1}\nu^k M_1 / 2\} \cap S_{\rho_k}(y_k)= \{ w < M_1\}  \cap S_{\rho_k}(y_k)
\]
yields  
\begin{equation}\label{eq:contract1}
\mu (E) \geq (1-\lambda) \mu \left(S_{\rho_k}(y_k) \right)>2t_k^{-\epsilon}.
\end{equation}
Note that by the choice of $\rho_k$, $S_{\rho_k}(y_k)$ is a large section. By applying the engulfing property between large sections, we obtain for $x\in S_{\rho_k}(y_k) $ that $S_{\rho_k}(y_k)\subset S_{C\rho_k}(x)$ and 
\[
\mu (\{ v>  \gamma^{-1}\nu^k M_1 / 2\} \cap S_{C\rho_{k}}(x)) \geq \mu (E) \geq (1-\lambda) \mu \left(S_{\rho_k}(y_k) \right) > \lambda \mu(S_{C\rho_{k}}(x)).
\]
This  implies $x\in \{ M_{\phi,\Lambda}v>  \gamma^{-1}\nu^k M_1 / 2\}$, and thus,
\[
S_{\rho_k}(y_k) \subset \{ M_{\phi,\Lambda}v >  \gamma^{-1}\nu^k M_1 / 2\}.
\]
Therefore, 
\begin{equation}\label{eq:large level set}
\mu (\{ M_{\phi,\Lambda}v>  \gamma^{-1}\nu^k M_1 / 2\}\cap S_{\rho_k}(y_k))  \geq  \mu\left( S_{\rho_k}(y_k)  \right)  \geq 2t_k^{-\epsilon}.
\end{equation}
The estimates \eqref{eq:contract1} and \eqref{eq:large level set} contradict with our assumptions that
\[
\mu (\{ M_{\phi,\Lambda}v >  \gamma^{-1}\nu^k M_1 / 2\}\cap B_{1/2}) \leq  (\gamma^{-1}\nu^k M_1 / 2)^{-\epsilon} \leq t_k^{-\epsilon}
\]
or
\[
\mu (E )\leq \mu (\{ v>  \gamma^{-1}\nu^k M_1 / 2\}\cap B_{1/2})\leq t_k^{-\epsilon}.
\]
Consequently, we must have $\sup_{S_{\rho_k}(y_k)} v \geq \gamma^{-1}\nu^k M_1$, which completes the induction step for $k+1$, provided that $y_{k+1}\in B_{1/2}$. 

From our choice of points ${y_k}$, we obtain the key estimate
\[
|y_{k+1}-y_k|\leq \operatorname{diam}(S_{4\rho_k}(y_k)) \leq C(n)\rho_k^{\frac{1}{3}},
\]
where we used Proposition \ref{prop:basic} in the second inequality. When $\gamma > 0$ is sufficiently small, the induction process indicates that for all $t_k < \kappa^{n/(2\epsilon)}$,
\[
|y_{k+1}| \leq  \frac{1}{4}+C(n) \sum_{i\leq k}\rho_i^{\frac{1}{3}}<  \frac{1}{4}+C(n) \sum_{i\leq k}(\gamma^{-1}\nu^k M_1/2)^{-\frac{2\epsilon}{3n}}< \frac{3}{8}.
\]
Furthermore, at the first time that $t_j = \kappa^{n/(2\epsilon)}$, we have $\rho_j=\rho_{j+1}=\cdots=C\kappa^{-1}$. By repeating the induction argument for $m =\lfloor c(n)\kappa^{\frac{1}{3}} \rfloor$ additional steps, we obtain
\[
|y_{j+m}- y_{j}| \leq C(n) \sum_{i= j}^{i=j+m-1}\rho_i^{\frac{1}{3}} \leq C(n)m\kappa^{-\frac{1}{3}} <\frac{1}{16}.
\]
This yields a point $y_{j+m} \in B_{7/16}$ 
satisfying
\begin{equation}\label{eq:induction step size}
v(y_{j+m}) \geq \nu^m v(y_j)\geq  \nu^{c(n)\kappa^{\frac{1}{3}}} v(y_1)\geq  e^{\beta(n) \kappa^{\frac{1}{3}}} \sup_{B_{1/4} } v , 
\end{equation}
where $\beta(n)=c(n)\log \nu$. This completes the proof.
\end{proof}
 
The following lemma will be used in the proof of Theorem \ref{thm:harnack}.
\begin{Lemma}\label{lem:critial density} 
Let $\phi\in\E_\Lambda$. There exist positive constants  $\Lambda_8(n)$ and $C(n)$ such that if $\Lambda\geq \Lambda_8$, then for any convex set $S \subset \frac{7}{8}\Omega$ whose inradius is larger than $C(n)\Lambda^{\frac12}\kappa^{-\frac{1}{2}} $, 
and any measurable subset $E \subset S$ with $\mu(E) \ge \frac12\mu(S)$, we have
\[
\mu (\{M_{\phi,\Lambda}\chi_E=1\} \cap S) > \frac14 \mu(S).
\] 
\end{Lemma}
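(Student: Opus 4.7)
The plan is a contradiction argument combined with a Vitali covering of a controlled enlargement of $S$. Assume $\mu(\{M_{\phi,\Lambda}\chi_E = 1\} \cap S) \leq \frac14 \mu(S)$ and set $F := S \setminus \{M_{\phi,\Lambda}\chi_E = 1\}$, so that $\mu(F) \geq \frac34 \mu(S)$. The hypothesis $\mu(E) \geq \frac12\mu(S)$ then forces
\[
\mu(E \cap F) \geq \mu(E) - \mu(S \setminus F) \geq \frac14 \mu(S).
\]
Since $\chi_E$ takes only the values $0$ and $1$, the condition $M_{\phi,\Lambda}\chi_E(y) < 1$ translates, for every $y \in F$ and every admissible height $h \in (\check{h}(y), \hat{h}(y))$, into the density bound $\mu(E \cap S_h(y)) \leq \lambda\, \mu(S_h(y))$.

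For each $y \in F$ I would select a smallest-large section by fixing $h(y) \in (\check{h}(y), 2\check{h}(y))$. By Proposition \ref{prop:basic}(2), each such section has diameter at most $C(n)\Lambda^{1/2}\kappa^{-1/2}$; consequently, by choosing the constant $C(n)$ in the hypothesis $r_S \geq C(n)\Lambda^{1/2}\kappa^{-1/2}$ (with $r_S$ the inradius of $S$) sufficiently large, the enlargement
\[
S^* := \bigcup_{y \in F} S_{h(y)}(y) \;\subset\; S + B_{r_S}
\]
satisfies $|S^*| \leq 2^n |S|$, via the classical containment $K + B_r \subset 2K - x_0$ valid for any convex $K$ with inradius $\geq r$ centered at an incenter $x_0$. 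Using $\Gamma$-periodicity with normalization $\mu(D(\Gamma)) = |D(\Gamma)|$, together with the surface-area bound $\mathrm{Area}(\partial K) \leq n|K|/\rho_K$ for a convex $K$ of inradius $\rho_K$, the $\mu$-mass of the boundary tiles of $K$ is at most a constant times $|K|/(\rho_K \kappa)$. Applied to $K = S$ and $K = S^*$, whose inradii dominate the fundamental-domain scale $\kappa^{-1}$ by a factor $\gtrsim \Lambda^{3/2}$ (as $\kappa \geq \Lambda^2$), this yields $\mu(K) \asymp |K|$, and hence $\mu(S^*) \leq C_0(n)\mu(S)$ after enlarging $\Lambda_8$.

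Now I invoke the Vitali covering Lemma \ref{lem:vitali covering Lemma} on $\{S_{c(n)h(y)}(y)\}_{y\in F}$ to extract a finite disjoint subfamily $\{S_{c(n)h_k}(y_k)\}_{k=1}^{k_0}$ with $F \subset \bigcup_k S_{h_k}(y_k)$. By Proposition \ref{prop:basic}(3), a section and its $c(n)$-contraction have comparable $\mu$-measure, so disjointness yields
\[
\sum_k \mu(S_{h_k}(y_k)) \leq C(n)\sum_k \mu(S_{c(n)h_k}(y_k)) \leq C(n)\, \mu(S^*) \leq C(n)C_0(n)\, \mu(S).
\]
Combining with the density bound at each center $y_k \in F$,
\[
\mu(E\cap F) \leq \sum_k \mu(E \cap S_{h_k}(y_k)) \leq \lambda \sum_k \mu(S_{h_k}(y_k)) \leq C(n)C_0(n)\,\lambda\,\mu(S).
\]
After possibly further decreasing the dimensional constant $\lambda = \lambda(n)$ fixed in Definition \ref{def:phi homogenization} so that $C(n)C_0(n)\lambda < \frac14$ (compatibly with the earlier constraint $4^{n+1}C_1\lambda < c(n)$), this contradicts $\mu(E\cap F) \geq \frac14\mu(S)$.

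The main obstacle is the quantitative comparison $\mu(S^*) \leq C_0(n)\mu(S)$: while the containment $S^* \subset S + B_{r_S}$ and the volume bound $|S^*| \leq 2^n|S|$ are elementary, converting Lebesgue to $\mu$-measure requires carefully exploiting periodicity via a boundary-tile analysis, and this is precisely what motivates the quantitative hypothesis $r_S \geq C(n)\Lambda^{1/2}\kappa^{-1/2}$ together with the small-period condition $\kappa \geq \Lambda^2$.
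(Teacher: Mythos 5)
Your proposal is correct and follows essentially the same route as the paper's proof: the set you call $E\cap F$ is exactly the paper's $A=E\setminus\{M_{\phi,\Lambda}\chi_E=1\}$, and in both arguments one covers the bad set by minimal large sections (height comparable to $\check{h}(y)$, hence of diameter $\lesssim \Lambda^{1/2}\kappa^{-1/2}$), uses the inradius hypothesis to confine the enlarged union inside a dilate of $S$ with $\mu$-mass comparable to $\mu(S)$ via periodicity, and concludes with the Vitali/covering lemma and the density bound $\lambda$ forced by $M_{\phi,\Lambda}\chi_E<1$. The only cosmetic differences are that you phrase it as a contradiction and unfold Lemma \ref{lem:covering Lemma} by hand, whereas the paper applies it directly; your extra smallness requirement on $\lambda(n)$ is harmless (and in fact the constraint $4^{n+1}C_1\lambda<c(n)$ already fixed in Definition \ref{def:phi homogenization} suffices).
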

\begin{proof}
For every point $x \in S \subset \frac{7}{8}\Omega$, Proposition \ref{prop:basic} implies that $\check{h}(x) \leq C(n)\Lambda^{\frac32}\kappa^{-\frac{3}{2}}$, and the diameter bound  $\operatorname{diam}(S_{\check{h}(x)}(x))  \leq C(n)\check{h}^{\frac{1}{3}} \leq  C(n)\Lambda^{\frac12}\kappa^{-\frac{1}{2}}$. Consequently, $S_{\check{h}(x)}(x) \subset (2S)\cap\Omega$. Since  $\kappa(S)\ge \kappa(2S)/2\ge\kappa(S_{\check{h}(x)}(x))/2\ge \Lambda/2$ is large, we also have $\mu(2S)\leq 3^n\mu(S)$.
Let us denote 
\[
A =E \setminus \{M_{\phi,\Lambda}\chi_E=1\}, \quad B= \bigcup_{y\in A}  S_{\check{h}(y)}(y) \subset 2S.
\]
For $y \in A$, since $M_{\phi,\Lambda}\chi_E(y)<1$, we have
\[
\frac{\mu(A \cap S_{\check{h}(y)}(y))}{\mu(S_{\check{h}(y)}(y))} \leq \frac{\mu(E \cap S_{\check{h}(y)}(y))}{\mu(S_{\check{h}(y)}(y))}  \leq \lambda. 
\]
Combining this with Lemma \ref{lem:covering Lemma}, we obtain
\[
\mu(A) \leq C_1(n)\lambda \mu(B) \leq C_1(n)\lambda  \mu(2S) <  C_1 (n)3^n\lambda\mu(S)<\frac{1}{4}\mu(S).
\]
This concludes the proof.
\end{proof}

Finally, we reach the following Harnack inequality with a quantitatively controlled error term.
\begin{Theorem}\label{thm:harnack}
Let $(\phi,\Omega,\mu,\Gamma)\in \E_\Lambda$ and $(\tilde \phi,\widetilde\Omega,\mu,\Gamma)\in \E_\Lambda$. Assume that 
$v \in C(\Omega \cup \widetilde{\Omega})$ is a nonnegative function satisfies
\[
L_{\phi}v \leq 0   \quad  \text{in } \Omega, \quad L_{\tilde{\phi}}v \geq 0   \quad  \text{in }\widetilde{\Omega}
\] 
in the sense of Definition \ref{def:approximated solution}.
Then there exist positive constants $\beta(n)$, $\Lambda_9(n)$,  and $C(n)$ such that if $\Lambda>\Lambda_9$, then either 
\[
\sup_{B_{1/4}}v\leq C\inf_{B_{1/4}}v,
\]
or for $\kappa =\kappa(B_1)$ that
\[
\sup_{B_{1/2}} v  \geq e^{\beta \kappa^{\frac{1}{6}}}\sup_{B_{1/4}}v.
\] 
Consequently, we have
\[
\sup_{B_{1/4}}v \leq C\inf_{B_{1/4}}v+e^{-\beta \kappa^{\frac{1}{6}}}\sup_{B_{1/2}} v.
\]
\end{Theorem}
\begin{proof} 
Without loss of generality, we can assume $v>0$ in $\Omega$. Then, we can further suppose $\inf_{B_{1/4}}v=1$. Applying Theorem \ref{thm:harnack super}, we derive that \eqref{eq:harnack super} holds for a large $\Lambda$.
The subsequent argument follows the same framework as the proof of Theorem \ref{thm:harnack sub}, with the following key modifications:

All sections $S_{\rho_{k-1}}$ are replaced by $S_{\rho_{k-1}}^{\tilde{\phi}}$. Consequently, in the growth condition \eqref{eq:exp growth}, the finite sequence ${y_k} \subset B_{1/2}$ to be constructed now satisfy $y_k \in \overline{S_{\rho_{k-1}}^{\tilde{\phi}}(y_{k-1})}$.
Similar to the proof of \eqref{eq:contract1},  we obtain
\[
\mu (\{ v>  \gamma^{-1}\nu^k M_1 / 2\} \cap S_{\rho_k}^{\tilde{\phi}}(y_k)\}) \geq (1-\lambda) \mu \left(S_{\rho_k}^{\tilde{\phi}}(y_k) \right) \geq \frac12 \mu \left(S_{\rho_k}^{\tilde{\phi}}(y_k) \right).
\] 
By Proposition \ref{prop:basic}, assuming $\rho_k \geq C \kappa^{-\frac{3}{4}}$, we conclude that the inradius of $S_{\rho_k}^{\tilde{\phi}}(x_k)$ is greater than $C \kappa^{-\frac{1}{2}}$.
In this way, we may still deduce from Lemma \ref{lem:critial density} the following lower bound, which serves as a replacement for estimate \eqref{eq:large level set}:
\[
\mu\left( \{ M_{\phi,\Lambda}v > \gamma^{-1}\nu^k M_1 / 2 \} \cap S_{\rho_k}^{\tilde{\phi}}(x_k) \right) \geq \frac14 \mu\left( S_{\rho_k}^{\tilde{\phi}}(x_k)  \right) \geq c(n)\rho_k^{\frac{n}{2}} .
\]  
To contradict the growth behavior \eqref{eq:harnack super}, and in view of the above, we now define the parameters $t_k$ and $\rho_k$ as follows:
\[
t_k = \min\{ \gamma^{-1}\nu^k M_1/2,  \kappa^{\frac{n}{4\epsilon}} \}, \quad \rho_{k}=C(n)t_k^{-\frac{2\epsilon}{n}} >\kappa^{-\frac{1}{2}}>C\kappa^{-\frac{3}{4}}.
\]
With these modifications, the total number of steps in our induction process becomes of order $\kappa^{\frac{1}{6}}$ in \eqref{eq:induction step size}. This accounts for the $\kappa^{\frac{1}{6}}$ factor appearing in the theorem's statement. 
\end{proof}

The above Harnack inequality leads to the following estimates.
\begin{Proposition}\label{prop:holder almost}
Suppose all the assumptions in Theorem \ref{thm:harnack}. Suppose in addition that there exists a constant $C_3(n) > 0$ such that for all $y \in B_{1/2}$ and all $h \geq \kappa^{-\frac{3}{4}}$, we have
\[
B_{h^{\frac{1}{2}}/2}(y)\subset S_h^{\phi}(y) \subset C_3B_{h^{\frac{1}{2}}}(y),\quad B_{h^{\frac{1}{2}}/2}(y)\subset S_h^{\tilde{\phi}}(y) \subset C_3B_{h^{\frac{1}{2}}}(y) .
\]
Then there exist positive constants $\alpha(n) \in (0,1)$, $\Lambda_{10}(n)$, and $C(n)$ such that if $\Lambda\ge\Lambda_{10}$ then for $\kappa = \kappa(B_1)$,
\[
|v(x) - v(y)| \leq C \left(|x - y|^{\alpha} + \kappa^{-\frac{\alpha}{4}}\right)\|v\|_{L^{\infty}(B_{1/2})} , \quad \forall\ x,y \in B_{1/4}.
\]
\end{Proposition}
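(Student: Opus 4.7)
I prove the Hölder estimate by an oscillation-decay iteration driven by the Harnack inequality of Theorem~\ref{thm:harnack}, scaled geometrically and truncated at $r_\ast \sim \kappa^{-1/4}$. The truncation is forced by the fact that the Harnack inequality only applies as long as the rescaled periodicity parameter $r\kappa$ remains large; stopping the iteration at $r_\ast$ and invoking the trivial $L^\infty$ bound below that scale accounts for the additive $\kappa^{-\alpha/4}$ correction in the conclusion.

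\emph{Step 1 (oscillation decay at scale $r$).} Fix $x_0 \in B_{1/4}$ and $r \in [\kappa^{-1/4}, r_0]$ with $r_0$ a small dimensional constant. Since $64r^2 \ge \kappa^{-3/4}$ for $\kappa$ large, the section-to-ball hypothesis gives
\[
B_{4r}(x_0) \subset \Omega_r := S^\phi_{64r^2}(x_0) \subset C_3 B_{8r}(x_0), \quad B_{4r}(x_0) \subset \tilde\Omega_r := S^{\tilde\phi}_{64r^2}(x_0) \subset C_3 B_{8r}(x_0).
\]
Let $T(x) := (x-x_0)/(4r)$ and set
\[
\phi'(y) := (4r)^{-2}\bigl(\phi - \ell^\phi_{x_0} - 64r^2\bigr)\circ T^{-1}(y), \quad \tilde\phi'(y) := (4r)^{-2}\bigl(\tilde\phi - \ell^{\tilde\phi}_{x_0} - 64r^2\bigr)\circ T^{-1}(y).
\]
These vanish on $\partial T(\Omega_r), \partial T(\tilde\Omega_r)$ respectively; the rescaled measure $\mu_{\phi'} = \mu_{\tilde\phi'}$ is $\Gamma'$-periodic for $\Gamma' := \Gamma/(4r)$ and automatically satisfies $\mu_{\phi'}(D(\Gamma')) = |D(\Gamma')|$ (the potential factor $(4r)^{-2}$ exactly compensates the pushforward factor $(4r)^{-n}$); and the rescaled relative size is $\kappa'(B_1) = 4r\kappa \ge \Lambda^2$ by the choice of $r$. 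The rescaled domains $T(\Omega_r), T(\tilde\Omega_r)$ are trapped between $B_1$ and $B_{2C_3}$, playing the role of $B_1 \subset \Omega \subset B_n$ in Theorem~\ref{thm:harnack}. Put $R := 8C_3 r$, $m := \inf_{B_R(x_0)} v$, and $M := \sup_{B_R(x_0)} v$; both $v-m$ and $M-v$ are nonnegative on $\Omega_r \cup \tilde\Omega_r$, with $v-m$ fitting directly into Theorem~\ref{thm:harnack} and $M-v$ doing so once the roles of $\phi$ and $\tilde\phi$ are swapped (the sub/supersolution relations flip under negation). Applying the theorem to each and adding the two inequalities gives, after standard algebra,
\[
\omega(r, x_0) \le \Bigl(\theta + C_0\, e^{-\beta(4r\kappa)^{1/6}}\Bigr) \omega(R, x_0),
\]
where $\omega(\rho, x_0) := \operatorname{osc}_{B_\rho(x_0)} v$ and $\theta := (C-1)/(C+1) \in (0,1)$ with $C = C(n)$ the Harnack constant of Theorem~\ref{thm:harnack}.

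\emph{Step 2 (iteration and conclusion).} Set $r_k := r_0(8C_3)^{-k}$ and $k_\ast := \max\{k : r_k \ge \kappa^{-1/4}\}$. For every $k \le k_\ast$ one has $e^{-\beta(4r_k\kappa)^{1/6}} \le e^{-\beta'\kappa^{1/8}}$, which for $\kappa$ sufficiently large is dominated by $(1-\theta)/(2C_0)$; hence the multiplicative factor in Step~1 is uniformly bounded above by $\theta' := (1+\theta)/2 \in (\theta, 1)$. Iterating gives
\[
\omega(r_k, x_0) \le (\theta')^k\, \omega(8C_3 r_0, x_0) \le C\,(r_k/r_0)^\alpha\, \|v\|_{L^\infty(B_{1/2})},
\]
with $\alpha := -\log\theta'/\log(8C_3) \in (0,1)$ depending only on $n$. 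For $x, y \in B_{1/4}$ and $r := |x-y|$: if $r \ge r_{k_\ast}$, then $|v(x) - v(y)| \le \omega(r, x) \le C r^\alpha \|v\|_{L^\infty(B_{1/2})}$; if $r < r_{k_\ast}$, then $|v(x) - v(y)| \le \omega(r_{k_\ast}, x) \le C\kappa^{-\alpha/4}\|v\|_{L^\infty(B_{1/2})}$. Both cases combine to give the stated bound.

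\emph{Main obstacle.} The technical crux lies in Step~1, namely verifying that the rescaled tuple $(\phi', \tilde\phi', T(\Omega_r), T(\tilde\Omega_r), \mu_{\phi'})$ honestly satisfies the hypotheses of Theorem~\ref{thm:harnack}. The three delicate points are (i) the preservation of the normalization~\eqref{eq:normalized assumption} under rescaling, which hinges on the specific $(4r)^{-2}$ scaling of the potentials; (ii) the simultaneous geometric containment $B_1 \subset T(\Omega_r), T(\tilde\Omega_r) \subset B_{2C_3}$, which is precisely what the section-to-ball hypothesis guarantees; and (iii) the combinatorial step of applying Theorem~\ref{thm:harnack} once to $v - m$ and once to $M - v$ with $\phi, \tilde\phi$ swapped, and combining them to extract oscillation decay. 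Once these are handled, Step~2 is a routine geometric iteration, made possible by the fact that the Harnack error $e^{-\beta(4r\kappa)^{1/6}}$ is super-polynomially small at every scale $r \ge r_\ast$.
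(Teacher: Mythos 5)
Your proposal is correct and follows essentially the same route as the paper's (very terse) proof: an oscillation-decay iteration obtained by applying Theorem \ref{thm:harnack} to $M-v$ and $v-m$ at geometrically decreasing scales, truncated at the scale where the rescaled period parameter would drop below the admissible threshold, with the truncation scale producing the additive $\kappa^{-\alpha/4}$ error. The only cosmetic difference is that the paper iterates directly on the sections $S_{\rho^k}^{\phi}(0)$ while you iterate on balls via the section-to-ball comparison hypothesis; your write-up usefully fills in the rescaling and normalization checks that the paper leaves implicit.
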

\begin{proof}
For simplicity, we assume that $\|v\|_{L^{\infty}(B_{1/2})} = 1$ and only provide the proof at $y=0$. By assumption, for $h \geq \kappa^{-\frac{3}{4}}$, we find that $S_h^\phi(0)$ and $S_h^{\tilde{\phi}}(0)$ are comparable, and  $\kappa(S_h) \geq c\kappa h^{\frac{2}{3}} \geq c \kappa^{\frac{1}{2}} $.

Let $\rho(n) > 0$ be small, and let $l=\lfloor \frac{3}{4}\log_{1/\rho}\kappa \rfloor $. For $k=1,2,\cdots,  l$, we denote 
\[
S_k=S_{\rho^k}^\phi(0),\quad \text{and } M_k = \sup_{S_k} v, \quad m_k = \inf_{S_k} v.
\]
Noting that
\[
L_{\phi} v \leq 0 \quad \text{in } \Omega, \quad L_{\tilde{\phi}} v \geq 0 \quad \text{in }\widetilde{\Omega},
\]
a standard argument applying the Harnack inequality in Theorem \ref{thm:harnack}  to  $M_k - v$ and $v - m_k$ in $S_k$ yields the relation
\[
M_{k+1} -m_{k+1}  \leq \max\{ (1-c)(M_{k}- m_{k}), 2e^{-\beta (\kappa(S_k) )^{\frac{1}{6}}}\}  
\]
for some $c(n) > 0$. Through an standard iterative process over $1\leq k \leq l$, we obtain the $\alpha$-H\"older continuity of $v$ at $0$, up to a small error term:
\[
2e^{-\beta (\kappa(S_l) )^{\frac{1}{6}}} +C\left(\operatorname{diam}(S_l)\right)^{\alpha}\leq e^{-c\beta\kappa^{\frac{1}{12}}} +C\rho^{\frac{\alpha l}{3}} \leq C\kappa^{-\frac{\alpha}{4}}
\]
for some small constant $\alpha(n)>0$ depending only on the dimension $n$.

\end{proof}

\section{Semiconcavity of global solutions}\label{sec:semiconvavity}

Let $\phi$ be a convex function in $\R^n$ whose Monge-Amp\`ere measure $\mu$ is $\Gamma_{\{\varepsilon_1, \cdots, \varepsilon_n\}}$-periodic. We define the  second incremental quotient associated with $\Gamma$ as
\[
\Delta^2_i\phi (x):=\Delta^2_{\varepsilon_i}\phi (x)= \frac{\phi(x+\varepsilon_i)+\phi(x-\varepsilon_i)-2\phi(x)}{|\varepsilon_i|^2} \geq 0.
\]

Note that $\Delta^2_i\phi$ is not affine invariant. Consider the rescaled function  
\[
\tilde{\phi}(x)=\frac{\phi(A  x)}{h}
\]
for some matrix $A \in \mathcal{S}_+^{n\times n}$ and $h>0$. Then $\tilde{\mu}:=\det D^2 \tilde{\phi}\,\ud x$ is $\Gamma_{\{A^{-1}\varepsilon_1, \cdots, A^{-1}\varepsilon_n\}}$-periodic, and we have the transformation law:
\begin{equation}\label{eq:transformation law}
\Delta^2_i\tilde{\phi} (x)  :=\Delta^2_{A^{-1}\varepsilon_i}\tilde{\phi} (x) =\frac{|\varepsilon_i|^2 }{h|A^{-1} \varepsilon_i|^2}  \Delta^2_{\varepsilon_i}\phi(A  x). 
\end{equation}

\begin{Lemma}\label{lem:control in small cube}
Let $\phi$ be a convex function and $\mathbb{Z}^n := \Gamma_{\{e_1,\cdots,e_n\}}$ be the standard lattice, then for the scaled lattice $\mathbb{Z}^n/8n := \Gamma_{\{e_1/8n, \dots, e_n/8n\}}$, it holds that 
\begin{equation}\label{eq:control in small cube}
\kappa_{\mathbb{Z}^n/8n}\left(\left\{x:\;\frac{ \max_{1 \leq i \leq n} \Delta^2_{e_i}\phi(x) }{\max_{1 \leq i \leq n} \Delta^2_{e_i}\phi(y)} >\frac{1}{8} \right\} \cap B_1(y)\right)>1 \quad \text{for all }  y\in \R^n.
\end{equation} 
\end{Lemma}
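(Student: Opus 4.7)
The plan is to translate to $y=0$, perform a one-dimensional analysis along the direction $e_{i_0}$ realizing $M(0)$, and then thicken the resulting interval to an $n$-dimensional cube by a convexity-based argument.

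First, by translation invariance I reduce to $y=0$ and pick $i_0\in\{1,\ldots,n\}$ with $M:=M(0)=\Delta^2_{e_{i_0}}\phi(0)$. The 1D slice $g(t):=\phi(te_{i_0})$ is convex, and its distributional second derivative $\nu:=g''\geq 0$ is a Borel measure on $\R$. Integration by parts yields
\[
\Delta^2_{e_{i_0}}\phi(te_{i_0})=g(t+1)+g(t-1)-2g(t)=\int_{\R}T(t-s)\,d\nu(s),\qquad T(s):=(1-|s|)_+,
\]
so $M=\int T\,d\nu$. I then split into two cases based on the concentration of $\nu$: (i) if $\nu([-1/2,1/2])>M/2$, for $t\in(-1/4,1/4)$ and $s\in[-1/2,1/2]$ one has $T(t-s)\geq 1/4$, giving $\Delta^2_{e_{i_0}}\phi(te_{i_0})>M/8$ on $(-1/4,1/4)$; (ii) otherwise the bounds $T\leq 1$ on $[-1/2,1/2]$ and $T\leq 1/2$ elsewhere in $[-1,1]$ combined with $M=\int T\,d\nu$ force $\nu(\{1/2<|s|\leq 1\})\geq M$, so WLOG $\nu((1/2,1])\geq M/2$, and for $t\in(1/2,1)$, $s\in(1/2,1]$ one has $T(t-s)>1/2$, giving $\Delta^2_{e_{i_0}}\phi(te_{i_0})>M/4$ on $(1/2,1)$. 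Either way, there is an open interval $I\subset(-1,1)$ of length $\geq 1/2$ on the $e_{i_0}$-axis where the slice estimate holds with a quantitative margin.

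The main technical challenge is to upgrade this 1D bound on $I\cdot e_{i_0}$ to a bound on an $n$-dimensional cube of side strictly bigger than $1/(8n)$ inside $B_1(0)$. Since $\phi$ is only assumed convex, no uniform transverse Lipschitz bound is available, so a naive continuity argument produces a cube of $\phi$-dependent size. The key tool is a convexity identity obtained by applying midpoint convexity in the transverse direction to each of the three translates that build $F_{i_0}(x):=\Delta^2_{e_{i_0}}\phi(x)$: for any $t_0\in\R$ and any transverse displacement $x'$,
\[
F_{i_0}(t_0 e_{i_0}+x')+F_{i_0}(t_0 e_{i_0}-x')\geq 2F_{i_0}(t_0 e_{i_0})-2F_\perp(t_0 e_{i_0},x'),
\]
where $F_\perp(t_0 e_{i_0},x'):=\phi(t_0 e_{i_0}+x')+\phi(t_0 e_{i_0}-x')-2\phi(t_0 e_{i_0})\geq 0$ is the transverse second difference. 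Choosing $t_0\in I$ where the 1D margin is at least $M/4$ (available directly in case (ii) and after a slight contraction of $I$ in case (i)) and a transverse cube $Q'\subset\R^{n-1}$ small enough that $F_\perp\leq M/8$ on $Q'$ guarantees that for each $x'\in Q'$, at least one of $t_0 e_{i_0}\pm x'$ lies in $\{F_{i_0}>M/8\}$. A careful symmetrization together with continuity of $F_{i_0}$ along the $e_{i_0}$-axis and a small thickening in $t$ then produces the desired $n$-dimensional cube of side $>1/(8n)$ inside $\{\max_i F_i>M/8\}\cap B_1(0)$, from which $\kappa_{\mathbb{Z}^n/8n}(\cdot)>1$ follows.

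The main obstacle is the thickening step: the lack of uniform Lipschitz control on a general convex $\phi$ forces reliance on convexity inequalities rather than differential estimates, and the constants must be tracked carefully to secure the universal side-length threshold $1/(8n)$. The balance is between the 1D margin obtained in the dichotomy and the transverse error $F_\perp$, which determines how small the transverse cube must be.
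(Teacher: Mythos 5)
Your 1D dichotomy along the $e_{i_0}$-axis is correct, and the transverse midpoint-convexity identity $F_{i_0}(t_0e_{i_0}+x')+F_{i_0}(t_0e_{i_0}-x')\ge 2F_{i_0}(t_0e_{i_0})-2F_\perp(t_0e_{i_0},x')$ is a valid observation. But the thickening step, which you yourself flag as the main obstacle, is where the proof actually lives, and as written it has two genuine gaps. First, you never show that the transverse cube $Q'$ on which $F_\perp\le M/8$ can be taken of a \emph{universal} side length exceeding $1/(8n)$, and indeed this fails for some admissible choices of $t_0$: take $n=2$ and $\phi(x)=K\bigl(x_1+\tfrac{9}{10}|x_2|-\tfrac{9}{10}\bigr)_+$. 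Then $\Delta^2_{e_2}\phi(0)=0$ and $M=\Delta^2_{e_1}\phi(0)=K/10$; the 1D measure is $\nu=K\delta_{9/10}$, so you are in your case (ii) with $t_0$ allowed anywhere in $(1/2,1)$; but at $t_0=9/10$ one computes $F_\perp(t_0e_1,se_2)=\tfrac{9}{5}K|s|=18M|s|$, which exceeds $M/8$ as soon as $|s|>1/144$, far short of the half-side $1/32$ you would need. So the admissible size of $Q'$ depends on where the singular part of $\mu_\phi$ sits relative to $t_0$, and your proof gives no rule for selecting $t_0$ to avoid such creases, nor a proof that a good $t_0$ always exists. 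Second, even granting $F_\perp\le M/8$ on $Q'$, the identity only yields that for each $x'\in Q'$ \emph{at least one} of $t_0e_{i_0}\pm x'$ lies in the good set; an open set meeting every antipodal pair of a cube need not contain any sub-cube of definite size (already with one transverse dimension the good set could be a union of arbitrarily short intervals whose reflections cover the gaps). The ``careful symmetrization'' and the ``small thickening in $t$'' are therefore precisely the missing arguments, not routine details.

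For comparison, the paper's proof avoids transverse regularity issues entirely. Normalize so that $\phi(0)=0$, $\phi\ge0$ (subtract the supporting plane at $0$), $M=\Delta^2_{e_1}\phi(0)$, and $\phi(e_1)\ge M/2$. Since $\phi(\pm e_i)\le M$ for all $i$, convexity on the cross-polytope gives $\phi\le M/7$ on $B_{1/(7\sqrt n)}(0)$. Let $\ell$ be a supporting affine function at $e_1$ of the convex sublevel set $\{\phi<\phi(e_1)\}$, so that $\phi(x+e_1)\ge\phi(e_1)\ge M/2$ whenever $\ell(x+e_1)>0$. Then for every $x$ with $|x|<1/(7\sqrt n)$ and $\ell(x+e_1)>0$,
\[
\Delta^2_{e_1}\phi(x)\;\ge\;\phi(x+e_1)-2\phi(x)\;\ge\;\tfrac{M}{2}-\tfrac{2M}{7}\;>\;\tfrac{M}{7},
\]
and this half-ball of radius $1/(7\sqrt n)$ contains an axis-parallel cube of side $1/(7n)=\tfrac{8}{7}\cdot\tfrac{1}{8n}$, which gives the claim. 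The key device is the supporting hyperplane at $e_1$, which provides the lower bound $\phi(\cdot+e_1)\ge M/2$ on a full half-space in one stroke. I would recommend either adopting that route or supplying complete proofs of the two missing steps above.
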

\begin{proof}
Without loss of generality, we may assume $y=0$, $\phi(0)=0$, $\phi\geq 0$, 
\[
M :=\max_{1 \leq i \leq n} \Delta^2_{e_i}\phi(0) =  \Delta^2_{e_1}\phi(0)\quad \text{and } \phi(e_1) \geq M/2.
\]
Since $\phi$ is convex, we have $0 \leq \phi(x) \leq \sqrt{n}M|x|$ in $B_{\sqrt{n}/n}(0)$, and consequently,
\[
0 \leq \phi(x) \leq M/7\quad \text{in } B_{\sqrt{n}/7n}(0).
\]
Moreover, there exists a supporting linear function $\ell$ satisfying $\ell(e_1) = 0$ such that the sublevel set $\{\phi < \phi(e_1)\} \subset \{\ell < 0\}$. Consequently, for any point $x + e_1 \in B_{\sqrt{n}/7n}(e_1) \cap \{\ell > 0\}$, we derive the lower bound:
\[
\Delta^2_{e_1}\phi(x) \geq  \phi(x+e_1)-2\phi(x) \geq \phi(e_1) -2 \phi(x)\geq M/2-2M/7>  M/7.
\]
Since $\kappa_{\mathbb{Z}^n/8n}(B_{\sqrt{n}/7n}(e_1) \cap \{\ell > 0\}) \geq 8/7 >1$,  this completes the proof.
\end{proof}

We will also use the following elementary lemma for convex functions.

\begin{Lemma}\label{lem:uniformly converge implies gradient converge}
Let $\phi$ be a convex function and $P(x) = \frac{1}{2}x^{\top} A x + b x + c$ be a quadratic polynomial with $A\in \mathcal{S}_+^{n\times n}$. If $|\phi - P| \leq \epsilon$ in $B_{2n}(0)$ for some sufficiently small $\epsilon > 0$, then
\[
|D\phi - DP| \leq 2(\operatorname{Tr}A  )^{\frac{1}{2}}\epsilon^{\frac{1}{2}} \quad \text{in } B_{n}(0).
\] 
\end{Lemma}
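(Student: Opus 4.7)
The plan is to exploit convexity of $\phi$ to turn the $L^\infty$ closeness $|\phi-P|\le\varepsilon$ into a pointwise gradient bound, with the $\sqrt{\varepsilon}$ rate coming from balancing a first-order term against the known Hessian of $P$.

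First, I would fix a point $x\in B_n(0)$ at which $D\phi(x)$ exists (which is almost every point, by Alexandrov/Rademacher for convex functions, and the statement is then extended by continuity of the subdifferential selection if needed). Set $p=D\phi(x)$ and $q=DP(x)=Ax+b$. For any unit vector $\xi$ and any $t\in(0,n]$, the point $x+t\xi$ still lies in $B_{2n}(0)$, so both hypotheses $|\phi(x)-P(x)|\le\varepsilon$ and $|\phi(x+t\xi)-P(x+t\xi)|\le\varepsilon$ apply. The exact identity $P(x+t\xi)=P(x)+tq\cdot\xi+\tfrac{t^2}{2}\xi^\top A\xi$ combined with the supporting-plane inequality $\phi(x+t\xi)\ge\phi(x)+tp\cdot\xi$ yields, after cancelling $\phi(x)-P(x)$,
\[
t(p-q)\cdot\xi \;\le\; 2\varepsilon \;+\;\tfrac{t^2}{2}\,\xi^\top A\xi \;\le\; 2\varepsilon \;+\;\tfrac{t^2}{2}\operatorname{Tr}A,
\]
where I used $\xi^\top A\xi\le\operatorname{Tr}A$ since $A\in\mathcal S_+^{n\times n}$.

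Second, I would optimize in $t$. The choice $t=2\sqrt{\varepsilon/\operatorname{Tr}A}$ makes the right-hand side equal to $2t\sqrt{\varepsilon\operatorname{Tr}A}$, giving $(p-q)\cdot\xi\le 2\sqrt{\varepsilon\operatorname{Tr}A}$. This choice of $t$ satisfies $t\le n$ precisely when $\varepsilon\le n^2\operatorname{Tr}A/4$, which is the meaning of ``$\varepsilon$ sufficiently small'' in the statement. Replacing $\xi$ by $-\xi$ gives the reverse inequality, so $|(p-q)\cdot\xi|\le 2(\operatorname{Tr}A)^{1/2}\varepsilon^{1/2}$ for every unit $\xi$, hence $|p-q|\le 2(\operatorname{Tr}A)^{1/2}\varepsilon^{1/2}$.

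There is no real obstacle here: the argument is essentially the standard fact that for convex functions $L^\infty$ closeness to a smooth function controls gradients up to a $\sqrt{\varepsilon}$ loss, with the constant determined by an upper bound on the second derivative of the smooth comparison function. The only minor care needed is the radius bookkeeping — picking $t=2\sqrt{\varepsilon/\operatorname{Tr}A}$ must keep $x+t\xi$ inside $B_{2n}$ — and this is exactly why the hypothesis holds on $B_{2n}$ while the conclusion is stated on $B_n$.
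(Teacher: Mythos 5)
Your proof is correct and follows essentially the same route as the paper's: the supporting-plane (subgradient) inequality for $\phi$, the $L^\infty$ hypothesis applied at $x$ and $x+t\xi$, and optimization in $t$ to balance $2\epsilon$ against $\tfrac{t^2}{2}\xi^\top A\xi$. The only cosmetic differences are that the paper normalizes $P$ so that $DP(y)=0$ and optimizes with $e^\top Ae$ before bounding it by $\operatorname{Tr}A$, whereas you bound by $\operatorname{Tr}A$ first; also note the argument applies verbatim to any $p\in\partial\phi(x)$, so the detour through a.e.\ differentiability is unnecessary.
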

\begin{proof} 
For any given $y \in B_{n}(0)$ and $p \in \partial \phi(y)$, denote $p - DP(y) = a e$ for some $e \in \mathbb{S}^{n-1}$ and $a \geq  0$. It suffices to estimate $a$. After subtracting the supporting linear function of $P$ at $y$ from both $\phi$ and $P$, we may assume that $P(x) = \frac{1}{2}(x-y)^{\top} A (x-y) $. Then, the convexity of $\phi$ implies that for any $0 < t < n$,
\[
ta = p\cdot te  \leq \phi(y+te)-\phi(y)\leq P(y+te)-P(y)+2\epsilon =
\frac{1}{2}e^{\top} A e t^2+2\epsilon.
\]
Taking $t =2(\epsilon/e^{\top} A e )^{\frac{1}{2}}$, we obtain the desired estimate that $a \leq 2(e^{\top}A e \epsilon)^{\frac{1}{2}} \leq 2(\operatorname{Tr}A  )^{\frac{1}{2}}\epsilon^{\frac{1}{2}} $.
\end{proof}

\begin{Lemma}\label{lem:no lay}
Let $\phi$ be a convex function in $\R^n$ whose Monge-Amp\`ere measure $\mu$ is $\Gamma_{\{\varepsilon_1, \cdots, \varepsilon_n\}}$-periodic. If $\mu \not\equiv 0$, then the graph of $\phi$ contains no line segments of infinite length (rays). Consequently, any section of $\phi$ is a bounded set.
\end{Lemma}
\begin{proof}
Suppose, to the contrary, that the graph of $\phi$ contains a ray. By subtracting an appropriate supporting linear function and applying a suitable affine transformation, we may assume that $\phi \geq 0$, $\phi = 0$ on $\{t e_n : t \geq 0\}$, and that $B_1(0)$ (and hence each $B_1(t e_n)$) contains a fundamental domain of the periodicity. Then, by the convexity of $\phi$, we have that $\phi$ is bounded on $B_2'(0) \times [1, \infty)$, where $B_2'(0)$ is the ball of radius $2$ in $\R^{n-1}$ centered at origin, and $\|\phi\|_{L^{\infty}(B_2'(0) \times [1, \infty))}\le \|\phi(\cdot,1)\|_{L^{\infty}(B_2'(0))}$.
Consequently,
\[
\partial \phi \left(B_1'(0) \times [2, \infty)\right) \subset \left\{ y : |y| \leq 2\|\phi\|_{L^{\infty}(B_2'(0) \times [1, \infty))} \right\}\subset \left\{ y : |y| \leq 2\|\phi(\cdot,1)\|_{L^{\infty}(B_2'(0))} \right\}.
\]
This contradicts the fact that
\[
\mu (B_1'(0) \times [2, \infty)) \geq \sum_{k \geq 2} \mu(B_1(2k e_n)) = \infty.
\]
\end{proof}

\begin{Lemma}\label{lem:approximated equation}
Let $\phi$ be a convex function in $\R^n$ whose Monge-Amp\`ere measure $\mu$ is $\Gamma_{\{\varepsilon_1, \cdots, \varepsilon_n\}}$-periodic. If $\mu \not\equiv 0$, then in any section $\Omega$ of $\phi$, the inequality
\[
L_{\phi} (\Delta^2_{i} \phi) \geq 0
\]
holds in the sense of Definition \ref{def:approximated solution}.
\end{Lemma}
\begin{proof}
Let $\{\mu_j\}$ and $\{\phi_j\}$ be the approximating sequences constructed in Remark \ref{rem:approximated construction} on a domain significantly larger than $\Omega$. The smooth functions $\Delta^2_{i} \phi_j$ converge locally uniformly to $\Delta^2_{i} \phi$ as $j \to \infty$.  Moreover, we recall that as in \cite[Lemma 2.2]{caffarelli2004liouville}, the concavity of the operator $F(D^2 w) =(\det D^2 w)^{\frac{1}{n}} $ on the space of positive definite matrices implies that 
\[
F_{kl}(D^2\phi_j(x)) D_{kl} \left[ \phi_j(x+\varepsilon_i) + \phi_j(x-\varepsilon_i) - 2\phi_j(x) \right] \geq 0,
\]
where $F_{kl}(D^2\phi_j) = \frac{\partial F}{\partial w_{kl}}(D^2\phi_j) = \frac{1}{n} (\det D^2\phi_j)^{\frac{1-n}{n}} (\Phi_j)^{kl}$, and $(\Phi_j)^{kl}$ denotes the cofactor matrix of $D^2 \phi_j$. 
This directly yields $L_{\phi_j} (\Delta^2_i \phi_j) \geq 0$, which verifies the third condition in Definition \ref{def:approximated solution}.
\end{proof}

Motivated by the results in \cite{caffarelli2004liouville} and \cite{li2019monge}, we establish the following semiconcavity estimate for global solutions to \eqref{eq:periodic equation ma}. 

\begin{Proposition}\label{prop:semiconcave}
Let $u$ be a convex solution to \eqref{eq:periodic equation ma} with $\mu$ being a $\mathbb{Z}^n/8n$-periodic measure (where $\mathbb{Z}^n := \Gamma_{\{e_1,\cdots,e_n\}}$) satisfying $\mu(\mathbb{Z}^n) = 1$. 
Under the nondegeneracy condition
\begin{equation}\label{eq:nondegenerate}
\Delta^2_{i} u(0):=\Delta^2_{e_i} u(0) > 0 \quad \text{for all } 1 \leq i \leq n,
\end{equation}
there exists a unique compatible matrix $A$ satisfying:
\[
\sup_{ \R^n}\Delta_z^2  u=z^{\top}Az ,\quad \forall z \in \mathbb{Z}^n.
\]
\end{Proposition}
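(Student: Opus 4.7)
The plan is to define $M(z):=\sup_{x\in\R^n}\bigl[u(x+z)+u(x-z)-2u(x)\bigr]$ for $z\in\mathbb{Z}^n$, prove $M(z)<\infty$, and then show $M(z)=z^{\top}Az$ for a unique $A\in\mathcal{S}_+^{n\times n}$. Uniqueness is automatic once $M$ is shown to be a quadratic form on the lattice, since $\mathbb{Z}^n$ spans $\R^n$ and symmetric matrices are determined by their quadratic forms.

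\emph{Boundedness.} The function $\psi_z(x):=u(x+z)+u(x-z)-2u(x)$ is nonnegative by convexity of $u$ and satisfies $L_u\psi_z\geq 0$, as noted at the start of this section. To invoke the machinery of Section~\ref{sec:harnack}, which requires the small-period regime, I would rescale: for $R\gg 1$ set $u_R(x):=R^{-2}u(Rx)$, whose Monge-Amp\`ere measure is periodic with period $\mathbb{Z}^n/(8nR)$ and relative-size parameter $\kappa_R\sim R$. The rescaled subsolution $\tilde\psi_z(x):=R^{-2}\psi_z(Rx)$ satisfies $L_{u_R}\tilde\psi_z\geq 0$, and the dichotomy of Theorem~\ref{thm:harnack sub} gives either a uniform $L^\infty$ bound on $B_{1/4}$ or an exponential blow-up $\sup_{B_{1/2}}\tilde\psi_z\geq e^{\beta\kappa_R^{1/3}}\sup_{B_{1/4}}\tilde\psi_z$. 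The latter alternative is ruled out by the at-most-quadratic growth of $u$ (a consequence of convexity together with periodicity of $\mu$, via Alexandrov's estimate). Unscaling and letting $R\to\infty$ gives $M(e_i)<\infty$; a telescoping decomposition of $\psi_z$ into translates of $\psi_{e_i}$ handles general $z\in\mathbb{Z}^n$.

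\emph{Quadratic structure.} A direct computation produces the pointwise identity
\[
\psi_{z_1+z_2}(x)+\psi_{z_1-z_2}(x)=\psi_{z_1}(x+z_2)+\psi_{z_1}(x-z_2)+2\psi_{z_2}(x).
\]
Taking suprema immediately yields $M(z_1+z_2)+M(z_1-z_2)\leq 2M(z_1)+2M(z_2)$. The heart of the proof is the reverse inequality. I would pick a maximizing sequence $x_k$ for $\psi_{z_1}$, replace $x_k$ by $x_k-y_k$ with $y_k\in\mathbb{Z}^n/8n$ the nearest lattice point, and correspondingly translate $u$ by $y_k$ (subtracting a supporting affine function) to obtain a normalized sequence $u_k$ still solving $\det D^2 u_k=\mu$. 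By Alexandrov compactness, extract a convex limit $u_\infty$ solving $\det D^2 u_\infty=\mu$; then $\psi_{z_1}^{u_\infty}$ is a nonnegative subsolution of $L_{u_\infty}$ attaining its supremum $M(z_1)$ at a bounded point. A strong maximum principle for subsolutions of the degenerate linearized Monge-Amp\`ere equation — obtained from the rigidity embedded in Theorem~\ref{thm:harnack} together with Lemma~\ref{lem:control in small cube} — propagates this to $\psi_{z_1}^{u_\infty}\equiv M(z_1)$ on all of $\R^n$. Performing the same extraction diagonally for all of $z_1,z_2,z_1\pm z_2$ (so the four suprema are approached along a common subsequence) gives $\psi_z^{u_\infty}\equiv M(z)$ simultaneously, and substitution into the identity forces equality in the parallelogram law. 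Hence $M$ is quadratic, and $M(z)=z^{\top}Az$ with $A\in\mathcal{S}_+^{n\times n}$ by $M\geq 0$ and the nondegeneracy $M(e_i)>0$.

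\emph{Main obstacle.} The critical difficulty is the reverse parallelogram inequality. A priori the maximizing sequences for different $\psi_z$'s are uncorrelated, so one must construct a \emph{simultaneous} near-extremizer. This requires combining the $(\mathbb{Z}^n/8n)$-periodicity of $\mu$ (which lets us retract maximizers into the fundamental cell) with a global constancy result for bounded subsolutions of the degenerate linearized Monge-Amp\`ere — both supplied by the small-period Harnack machinery developed in Section~\ref{sec:harnack}, since the doubling condition that would allow classical strong-maximum-principle arguments is unavailable here.
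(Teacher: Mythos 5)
Your outline has the right flavor (second differences are subsolutions of the linearized equation; combine the Section~\ref{sec:harnack} dichotomy with a blow-down to a quadratic polynomial), but there are several genuine gaps. First, Theorem~\ref{thm:harnack sub} is not a free dichotomy: it has the hypothesis $\min\{\mu(\{v>t\}\cap B_{1/2}),\mu(\{M_{\phi,\Lambda}v>t\}\cap B_{1/2})\}\le t^{-\epsilon}$ for all $t$, and verifying it for the second incremental quotients is the heart of the matter. The paper does this via the integral identity \eqref{eq:w21 estimate}, $\int_E\Delta^2_{i,h}u_h\,\ud\mu_h\approx\mu_h(E)\,\Delta^2_{i,h}P_A(0)$, which rests on the J\"orgens--Calabi--Pogorelov blow-down and the gradient estimate of Lemma~\ref{lem:uniformly converge implies gradient converge}; your proposal never supplies this input. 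The same identity (together with Lemma~\ref{lem:control in small cube}, which converts a huge pointwise value into a huge integral over a fundamental cell) is what rules out the exponential alternative; quadratic growth of $u$ alone does not, since that alternative is a statement about the ratio $\sup_{B_{1/2}}v/\sup_{B_{1/4}}v$. Second, your scaling is off: second incremental quotients are invariant under the isotropic rescaling $u_R(x)=R^{-2}u(Rx)$ (one has $\Delta^2_{z/R}u_R(x)=\Delta^2_z u(Rx)$, with no $R^{-2}$), so a bound $\sup_{B_{1/4}}\tilde\psi_z\le C_2$ for your $\tilde\psi_z=R^{-2}\psi_z(R\,\cdot)$ only gives $\psi_z\le C_2R^2$, which is vacuous. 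Moreover, for $u_R$ to fit the framework of Section~\ref{sec:harnack} its sublevel sets must be comparable to balls, i.e.\ $T_h\approx h^{1/2}I$; this is exactly what the nondegeneracy condition \eqref{eq:nondegenerate} is used to prove (Steps 2--3 of the paper, carried out first in the John-normalized frame), so assuming it at the outset is circular.

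For the quadratic structure your parallelogram-law route is genuinely different from the paper's, but it hinges on two unproven claims: a global strong maximum principle for bounded subsolutions of the degenerate linearized equation attaining their supremum (Theorem~\ref{thm:harnack} does not yield this, since it requires $v$ to be simultaneously a supersolution for one potential and a subsolution for another with the same periodic Monge--Amp\`ere measure, and its conclusion carries an error term), and the simultaneity of near-extremizers for $\psi_{z_1},\psi_{z_2},\psi_{z_1\pm z_2}$, which does not follow from recentering on the maximizers of one of them. The paper sidesteps both issues: it shows directly that $\sup_{\R^n}\Delta^2_{e_i}u=e_i^{\top}Ae_i$ for the (a priori subsequential) blow-down limit $A$, by applying Lemma~\ref{lem:Theorem 7.3.1 modify} to the normalized supersolution $(\alpha-\Delta^2\tilde u_h)/a_h$ and contradicting \eqref{eq:w21 estimate} if $\alpha>e_i^{\top}Ae_i$; uniqueness of $A$ and the identity for general $z\in\mathbb{Z}^n$ then follow by running the same argument in the directions $e_i\pm e_j$ and after a lattice change of coordinates. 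You would need to either adopt that quantitative argument or actually prove the rigidity statements you invoke.
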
 
\begin{proof}
Without loss of generality, we may assume $u(0) = 0$ and $u \geq 0$. 
Let us define $S_h := \{x \in \mathbb{R}^n : u(x) \leq h\}$, and select $\Lambda_u>0$ such that the section $S_{\Lambda_u}$ satisfies $\kappa_{\mathbb{Z}^n}(S_{\Lambda_u}) \geq \Lambda_0^2$. 
Let us now normalize $S_h$ such that $B_1(0) \subset T_h^{-1} S_h \subset B_n(0)$ and consider the normalized function
\[
u_{h}(x) =\frac{u(T_h x)}{(\det T_h)^{\frac{2}{n}}} \quad \text{with } \det D^2 u_h= \mu_h=\mu \circ  T_h .
\] 
Due to the periodicity of $\mu$ and large $h$, we know that 
\[
c(n)h^{\frac{n}{2}} \leq \det T_h \leq C(n)h^{\frac{n}{2}},
\] 
and thus, $c(n)h^{\frac{n}{2}} \leq |S_h| \leq C(n)h^{\frac{n}{2}}$. Applying Proposition \ref{prop:basic} to $\phi=u_h-h(\det T_h)^{-\frac{2}{n}}$, we obtain that for sufficiently large $h$,  
\[
c(n) (h/\Lambda_u)^{\frac{1}{3}} S_{\Lambda_u} \subset S_h \subset C(n) (h/\Lambda_u)^{\frac{2}{3}} S_{\Lambda_u},
\] 
and consequently, $B_{c(n) h^{\frac{1}{4}}}\subset S_h\subset B_{C(n) h^{\frac{3}{4}}}$ for all large $h$.  That is, for sufficiently large $h$, we now have
\[
\quad c(n)h^{\frac{1}{4}} I\leq  T_h  \leq C(n)h^{\frac{3}{4}} I.
\]
The transformation law \eqref{eq:transformation law} then yields the following estimates at corresponding points 
\begin{equation}\label{eq:transformh}
c(n)h^{-\frac{1}{2}}\Delta^2_{e_i} u \leq \Delta^2_{T_h^{-1} e_i} u_h \leq C(n)h^{\frac{1}{2}}\Delta^2_{e_i} u.
\end{equation}

In the following, we always work on a sequence $\{u_{h_j}\}$ for $\{h_j\}$ tending to $+\infty$. For simplicity, we refer to this as a sequence $u_h$ as $h \to \infty$, omitting the subscripts for notational convenience.

Since $\mu$ is periodic, there exists a sequence $\{h\}$ converging to $\infty$ such that the functions $\{u_{h}\}$ converge locally uniformly to a convex solution of $\det D^2 w = 1$ in $\R^n$. By the J\"orgens-Calabi-Pogorelov theorem and our normalization assumption, we conclude that $w(x):=P_A(x) = \frac{1}{2}x^{\top} A x$ for 
\[
A\in \mathcal{S}:=\{ A \in \mathcal{S}_+^{n\times n} :\;\det A = 1, \ |A| \leq C(n)  \}.
\]
In particular, for sufficiently large $h$, the function $u_h$ remains a small perturbation of a quadratic function, which yields
\[
\inf_{A \in \mathcal{S} } \|u_h- P_A \|_{L^{\infty}(B_{2n})} \leq \epsilon(h)  \quad \text{and } \inf_{A \in \mathcal{S} } \|D u_h-D P_A \|_{L^{\infty}(B_{n})} \leq C(n) \epsilon(h)^{\frac{1}{2}},
\]
where $\epsilon(h) \to 0$ as $h \to \infty$, and the gradient estimate follows from Lemma \ref{lem:uniformly converge implies gradient converge}. Moreover, as established in Lemma \ref{lem:approximated equation}, the inequality 
\[
L_{u_h} (\Delta^2_{i,h} u_h) \geq 0
\]
holds in the sense of Definition \ref{def:approximated solution}.

\textbf{Step 1.} 
 Let $c_1>0$. For any  open convex $E$ satisfying $B_{c_1}(0) \subset E \subset B_{n}(0)$ and any sequence $u_h$ converging to $w(x)=\frac{1}{2}x^{\top} A x$ with $A \in \mathcal{S}$, we show that
\begin{equation}\label{eq:w21 estimate}
\left|\frac{\int_{E} \Delta^2_{i,h} u_h(x) \ud\mu_h}{\mu_h(E) \Delta^2_{i,h} P_A(0)} - 1\right| \leq \sigma(h,n,c_1) \to 0 \quad \text{as } h \to \infty,
\end{equation}
where 
\[
\Delta^2_{i,h} u_h := \Delta^2_{T_h^{-1} e_i} u_h \quad\mbox{and}\quad \Delta^2_{i,h} w(0)=\Delta^2_{T_h^{-1}e_i} w(0) 
\]
denote the second incremental quotient associated with $\Gamma_h := \Gamma_{\{T_h^{-1}e_1, \dots, T_h^{-1}e_n\}}$.

For fixed $h$ and $i$, let us consider the rescaled direction $e = T_h^{-1}e_i$, and  the directional accessibility set
\[
E^{+}= (E+\{e\})\cup (E-\{e\})  \quad \text{and }
E^{-}=(E+\{e\}) \cap (E-\{e\}).
\]  
For $x\in E^{+}$, we consider 
\[
\nu(x) = 
\begin{cases} 
0 & \text{if } x\in E,\\
-e & \text{if } x\not\in E\mbox{ and if }\exists \, t > 0 \text{ such that } x + t e \in E \quad \text{(``left''-accessible)}, \\
e & \text{if }  x\not\in E\mbox{ and if } \exists \, t < 0 \text{ such that } x + t e \in E \quad \text{(``right''-accessible)}.
\end{cases}
\]
By direct computations, we have
%\begingroup
%\allowdisplaybreaks
\begin{align*}
\int_{E^{+}} \Delta^2_{i,h}u_h(x) \ud \mu_h
&= \int_{ E^{+} \setminus E } \frac{u_h(x+\nu)-u_h(x)}{|e|^2} \ud \mu_h \\
&= \int_{ E^{+} \setminus E } \frac{\int_{0}^1 D_{\nu}u_h(x+t\nu) dt  }{|e|^2} \ud \mu_h \\
& \leq  
\int_{ E^{+} \setminus E} \frac{\int_{0}^1 D_{\nu}P_A(x+t\nu) dt }{|e|^2} \ud \mu_h+C(n) \epsilon(h)^{\frac{1}{2}}\\
&= \int_{E^{+}} \Delta^2_{i,h} P_A \ud \mu_h +C(n) \epsilon(h)^{\frac{1}{2}}\\
&=\mu_h(E^+)\Delta^2_{i,h} P_A(0) +C(n) \epsilon(h)^{\frac{1}{2}}.
\end{align*}
%\endgroup
In a similar way, we have
\[
\int_{E^{-}} \Delta^2_{i,h}u_h(x) \ud \mu_h\geq \mu_h(E^-)\Delta^2_{i,h} P_A(0)- C(n) \epsilon(h)^{\frac{1}{2}}.
\]
Observing that $\Delta^2_{i,h}u_h \geq 0$, $c(n)\leq \Delta^2_{i,h} P_A(0)\leq C(n)$, the measure $\mu_h$ is nonnegative, and 
\[
1\leq \frac{\mu_h(E^+)}{\mu_h(E^-)} \leq 1+C(n,c_1)h^{-\frac{1}{4}}\to 1 \quad\mbox{as } h\to\infty,
\] 
we derive the estimate \eqref{eq:w21 estimate}.

\textbf{Step 2.}
From \eqref{eq:w21 estimate}, we derive that
\[
\mu_h(\{\Delta^2_{i,h} u_h >  t\}  \cap B_{1/2})\leq  C(n)t^{-1}   \quad  \forall t>0. 
\]
Applying Theorem \ref{thm:harnack sub} to $L_{u_h}  \Delta^2_{i,h} u_h  \geq 0$ yields either
\begin{equation}\label{eq:w2infty bound}
\sup_{B_{1/4} } \Delta^2_{i,h} u_h  \leq C_2(n),
\end{equation}
or for $\kappa \geq \kappa_{\Gamma_h}(B_1) \geq c(n)h^{\frac{1}{4}}$ and some $y\in B_{1/2} $ that
\[
\begin{split}
\Delta^2_{i,h} u_h(y)&=\sup_{B_{1/2} } \Delta^2_{i,h} u_h \\  
&\geq e^{\beta(n) \kappa^{\frac{1}{3}} } \sup_{B_{1/4} }\Delta^2_{i,h} u_h \\
&\geq c(n)e^{\beta(n) \kappa^{\frac{1}{3}} } \|\Delta^2_{i,h} u_h\|_{L^1(B_{1/4})} \\ &
\geq c(n)e^{\beta_1(n) h^{\frac{1}{12}} } .
\end{split}
\]
For sufficiently large $h$, the combination of  the local control \eqref{eq:control in small cube} and the transformation law \eqref{eq:transformh}, along with the $\Gamma_h/8n$-periodicity of $\mu_h$, implies that
\[
 \max_{1\leq j \leq n}\Delta^2_{j,h} u_h(x)\geq   c(n)h^{-1}\max_{1\leq j \leq n}\Delta^2_{j,h} u_h(y)  ,\quad \forall x\in y+D(\Gamma_h/8n) .
\]
Consequently, the second scenario leads to a contradiction with the polynomial growth bound
\[
\begin{split}
C(n) & \geq \sum_{1\leq j \leq n} \int_{B_1}  \Delta^2_{j,h} u_h(x) \ud\mu_h \\
&\geq \int_{B_1} \max_{1\leq j \leq n}\Delta^2_{j,h} u_h(x) \ud\mu_h \\  
& \geq \int_{y+D(\Gamma_h/8n)} \max_{1\leq j \leq n}\Delta^2_{j,h} u_h(x)  \ud\mu_h \\
& \geq c(n)h^{-1} \int_{y+D(\Gamma_h/8n)}  \max_{1\leq j \leq n}\Delta^2_{j,h} u_h(y)\ud\mu_h \\
& \geq c(n)h^{-\frac{n+2}{2}}e^{\beta_1(n) h^{\frac{1}{12}} },
\end{split}
\]
where we used \eqref{eq:w21 estimate} in the first equality. Hence, we are always in the first case, and the inequality \eqref{eq:w2infty bound} is satisfied. 

\textbf{Step 3.}
By our nondegeneracy assumption $\min_{1\leq i\leq n}\Delta^2_{i} u(0) > 0$, the transformation \eqref{eq:transformation law} and \eqref{eq:w2infty bound} imply that $T_h \leq C_u h^{\frac{1}{2}}I$. Combined with the estimate $\det T_h \approx h^{\frac{n}{2}}$, this yields the two-sided bound
\[
c_u h^{\frac{1}{2}}I \leq T_h \leq C_u h^{\frac{1}{2}}I.
\]
This allows us to  consider the normalized scaling defined by $\tilde{T}_h = h^{-\frac{1}{2}}$,
\[
\tilde{u}_{h}(x) =\frac{u(h^{\frac{1}{2}} x)}{h} \quad \text{with } \det D^2 \tilde{u}_{h}= \tilde{f}_{h}=f \circ  \tilde{T}_{h}.
\] 
This normalization yields $\Delta^2_{h^{\frac{1}{2}}e_i}\tilde{u}_{h} =\Delta^2_{e_i} u$ for all $1\leq i\leq n$, and by repeating our previous argument, we obtain an upper bound $\sup_{ \R^n}\Delta^2_{e_i} u \leq C(n,u)$.

\textbf{Step 4.} Suppose for some sequence $h \to \infty$, the rescaled functions $\tilde{u}_h$ locally converge to a quadratic form $P_A(x) = \frac{1}{2}x^{\top}A x$ for $A \in \mathcal{S}_+^{n\times n}$ with $\det A = 1$. 
For each fixed direction $i$, we denote
\[
\alpha=\sup_{\R^n}\Delta^2_{e_i} u,\quad \beta= e_i^{\top}  A e_i.
\] 
Note that the estimate \eqref{eq:w21 estimate} continues to hold for $\tilde{u}_{h}$, from which we derive $\alpha \geq \beta$.
We claim
\begin{equation}\label{eq:w2infty sup}
\alpha=\beta.
\end{equation}
Assume by contradiction that $\alpha = \beta + 4s$ for some $s > 0$.
Through rescaling from infinity, we may assume without loss of generality that there exist points $x_h \in B_{1/8}$ satisfying
\[
\alpha- \Delta^2_{h^{\frac{1}{2}}e_i} \tilde{u}_{h}(x_h)= \inf_{B_{1/8}} (\alpha- \Delta^2_{h^{\frac{1}{2}} e_i} \tilde{u}_{h} ) \leq a_h,
\]
where $a_h \to 0 $ as $h \to \infty$. Define the normalized function $v = (\alpha - \Delta^2_{h^{\frac{1}{2}}e_i} \tilde{u}_{h})/a_h$ and set $\delta = s/[C(\beta + 2s)]$. Applying Lemma \ref{lem:Theorem 7.3.1 modify} yields with $\phi=\tilde{u}_h-1$ and $\Omega=S_{1,h}=\{\tilde{u}_h \leq 1\} $, we obtain that
\[
\tilde{\mu}_h ( \{v\leq M_1(n,\delta) \} \cap S_{1,h}  ) \geq (1-\delta) \tilde{\mu}_h(S_{1,h} ),
\]
where $\tilde{\mu}_h=\tilde{f}_{h}\,\ud x$. For sufficiently small $a_h$, we consequently obtain
\[
\tilde{\mu}_h\left(\left\{\Delta^2_{h^{\frac{1}{2}} e_i} \tilde{u}_h>\beta  +2 s\right\} \cap S_{1,h}\right)= \tilde{\mu}_h\left(\left\{v <\frac{2s}{a_h}\right\}  \cap S_{1,h}\right)\geq  (1-\delta) \tilde{\mu}_h (S_{1,h}).
\]
This leads to the integral lower bound
\[
\int_{S_{1,h}} \Delta^2_{h^{\frac{1}{2}} e_i} \tilde{u}_h \ud \tilde{\mu}_h \geq (1-\delta) (\beta+2s) \tilde{\mu}_h (S_{1,h})\geq 
(\beta+s) \tilde{\mu}_h (S_{1,h}).
\]
This contradicts the uniform estimate \eqref{eq:w21 estimate} after performing a suitable rescaling.

\textbf{Step 5.} We show that $\tilde{u}_h$ converges to a unique quadratic form $P_A$ as $h \to \infty$, for some $A \in \mathcal{S}_+^{n \times n}$, and   $\sup_{\R^n}\Delta^2_z u = z A z^\top$ for all $z\in\mathbb{Z}^n$. 

Indeed, let $P_A$ and $P_B$ be two quadratic forms with $A, B \in \mathcal{S}_+^{n \times n}$, obtained as limits of $\tilde{u}_h$ along two sequences $h \to \infty$. From \eqref{eq:w2infty sup}, we first observe that the diagonal entries of $A$ and $B$ coincide. Moreover, for each pair $i\neq j$, considering in the rotated coordinates, replacing $\{e_i,e_j\}$ by $\{e_i+e_j, e_i-e_j\}$, and noting that $\mu$ remains periodic with respect to these directions, the same argument shows that $(e_i+e_j)^{\top}  A (e_i+e_j) = (e_i+e_j)^{\top} B (e_i+e_j) $. Since $i,j$ are arbitrary, we conclude that $A = B$. This proves the uniqueness of $A$.

Now, for any $z \in \mathbb{Z}^n$, let us select vectors $\mathbf{z}_1, \mathbf{z}_2, \dots, \mathbf{z}_n$ in $\mathbb{Z}^n$ with $\mathbf{z}_1 = z$ such that they form a coordinate system of $\mathbb{R}^n$. For simplicity, assume $\{\mathbf{z}_1,   \dots, \mathbf{z}_n\} = \{\T e_1, \dots, \T e_n\}$. Then, $\tilde{A}=(\det \T )^{-\frac{2}{n}} \T^{\top} A \T$ is the unique matrix corresponds to the blow-down limit of $\tilde{u}(x)= (\det \T )^{-\frac{2}{n}}u(\T x)$. Applying the  conclusion \eqref{eq:w2infty sup} to the function $\tilde{u}(y)$ implies that $\sup_{\R^n}\Delta^2_{e_1} \tilde{u}=e_1\tilde{A}e_1^\top$. Scaling back yields the affine invariant estimate $\sup_{\R^n}\Delta^2_z u=z A z^\top$.  
\end{proof}

\section{Proof of the main Theorems}\label{sec:main theorem}

\begin{proof}[Proof of the existence of periodic solutions in Theorem \ref{thm:uniqueness within periodic}]
Let $v(x) + P(x)$ be a periodic solution to equation \eqref{eq:periodic equation ma} in the sense of Definition \ref{def:periodic solution}, where $v(x)$ is periodic and $P(x)=\frac{1}{2}x^{\top}Ax+bx+c$ is quadratic. By the convexity of $v + P$, we obtain from \cite[Lemma 2.2]{li1990existence} that
\begin{equation}\label{eq:c0 estimates}
|Dv(x)| \leq C(n)|A|.
\end{equation}
The existence of such periodic solutions was established in \cite{li1990existence} via the continuity method for cases where $\mu = f(x) \, \ud x$ with $f$ being smooth and strictly positive. 
This result naturally extends to a general $\mu$ by considering the approximating solutions $v_j$ to $\det D^2 v_j = \mu \ast \eta_j$, where $\{\eta_j\}$ is a sequence of strictly positive symmetric mollifiers with exponential decay  (e.g., rescaled Gaussian kernels). The convergence of $v_j$ to a solution is then guaranteed by the uniform $C^0$ estimate \eqref{eq:c0 estimates} and standard stability results for the Monge-Ampère equation.

The compatibility condition \eqref{eq:A compatible} can be verified as follows. First, consider the smooth case: if $v$ is smooth, then by direct computation as in \cite[Lemma 2.1]{li1990existence} we have
\[
\det A=\int_{\mathbb{T}^n} \det A \,\ud x= \int_{\mathbb{T}^n} \ud (v_1+A_{1j}x_j) \wedge \cdots \wedge \ud (v_n+A_{nj}x_j)=\mu(\mathbb{T}^n).
\] 
For the general case, we establish the identity via an approximation argument. First, we approximate $v+P$ by its smooth regularization $(v+P)*\eta_{\epsilon}$.
Since the Monge-Ampère measure is stable under the uniform convergence of solutions, $\det D^2((v+P)*\eta_{\epsilon})$ converges weakly to $\det D^2(v+P)$.
Passing to the limit then yields the compatibility condition between $A$ and $\mu$.
\end{proof}

\begin{proof}[Proof of Theorems \ref{thm:uniqueness within periodic} and \ref{thm:solutions are periodic}]  
Let $u$ be a global solution of \eqref{eq:periodic equation ma}. By Lemma \ref{lem:no lay}, $u$ cannot be linear on any ray. 
Therefore, there exists a sufficiently large integer $R$ such that $S_1(0) \subset B_R(0)$.
We now rescale $u$ by defining  $u(8nRx)/64n^2R^2$. For simplicity, we continue to denote this rescaled function as $u$. Under this scaling, one can verify that condition \eqref{eq:nondegenerate} in Proposition \ref{prop:semiconcave} is satisfied. Hence, by Proposition \ref{prop:semiconcave}, there exists a compatible matrix $A \in \mathcal{S}_+^{n \times n}$ such that 
\[
\sup_{ \R^n}\Delta_z^2  u=z^{\top}Az,\quad \forall z \in \mathbb{Z}^n.
\]
Let $w$ be a solution to $\det D^2 w=\mu(8nR\,\cdot)$ in $\R^n$ such that $w(x)-\frac{1}{2}x^{\top} A x $ is $\mathbb{Z}^n/(8nR)$ periodic. Then $\Delta_z^2  w=z^{\top}Az$. Let 
\[
v(x)=u(x)-w(x).
\]
By subtracting a suitable linear function, we now assume that $v(0)=0$ and 
\[
v(e_i)=v(-e_i),\quad 1\leq i\leq n.
\]
Noting that $\sup_{\R^n} \Delta_z^2 v = \sup_{\R^n} \Delta_z^2 (u - w) = 0$, we then find that
\[
v(\pm ke_i)\leq 0 \quad  \text{for all } k\in \mathbb{Z},\ 1\leq i\leq n.
\]

To simplify our discussion, let us set $A = \T^2$ and consider the functions
\[
(\det \T)^{\frac{2}{n}} u(\T^{-1}x),\quad(\det \T)^{\frac{2}{n}}  v(\T^{-1}x),\quad (\det \T)^{\frac{2}{n}}  w(\T^{-1}x)
\]
which we still denote as $u$, $v$, and $w$. Then the measure $\mu$ and the function $w$ become $\Gamma_{\{\varepsilon_1, \cdots, \varepsilon_n\}}$-periodic with $\varepsilon_i = \T e_i$. And we have 
\begin{equation}\label{eq:v convex midpoint}
\sup_{ \R^n}\Delta_z^2  u= \Delta_z^2  w=1 \quad \text{and } \sup_{ \R^n}\Delta_z^2  v=0, \quad \forall z \in \Gamma_{\{\varepsilon_1, \cdots, \varepsilon_n\}},
\end{equation}
and
\begin{equation}\label{eq:1/2}
\quad v(\pm k\varepsilon_i)\leq v(0)= 0 \quad \text{for all } k\in \mathbb{Z},\ 1\leq i\leq n.
\end{equation}
Moreover, since $\Delta_z^2  u\le 1$, for every $y$ with supporting linear function $\ell_y$ of $u$ at $y$, we also have 
\[
(u-\ell_y)(y+z)\le (u-\ell_y)(y+z)+(u-\ell_y)(y-z)\le |z|^2 \quad\mbox{for all } z\in\mathbb{Z}^n.
\]
This implies that for each section $S_h^u(y)$ with $h \geq C(n)$, we have $B_{h^{\frac{1}{2}}/2}(y) \subset S_h^u(y)$. Since $|S_h^u(y)| \leq C(n) h^{\frac{n}{2}}$ for large $h$, it follows that
\[
B_{h^{\frac{1}{2}}/2}(y) \subset S_h^u(y) \subset B_{C(n)h^{\frac{1}{2}}}(y) .
\]
Similar results hold for $w$ for large $h$,
\[
\quad B_{h^{\frac{1}{2}}/2}(y) \subset S_h^w(y) \subset B_{C(n)h^{\frac{1}{2}}}(y) .
\]
Consequently, we have $|u(x)|\le 4|x|^2$ and $|w(x)|\le 4 |x|^2$ for large $x$.

\textbf{Step 1.}  We claim that
\begin{equation}
\sup_{\R^n} v < \infty.
\end{equation}
The proof follows from the arguments in \cite[Lemma 2.9]{caffarelli2004liouville} and \cite[Lemma 4.5]{li2019monge}.
The constants below shall also depend on $A$ and the period $\{\varepsilon_1, \cdots, \varepsilon_n\}$.

Let us denote
\[
M_{R} =\sup_{B_{R} } v. 
\]
Suppose $v$ is not bounded above. Then we have $\lim_{R \to \infty} {M}_R = \infty$.
For large $R$,  by noting for each $x \in B_{3R/2}$, there exists $z \in  \Gamma$ such that $x-2z \in \left\{\sum_{i=1}^n t_i \varepsilon_i,\ -1 \leq t_i \leq 1 \right\}$. Using the fact that $\Delta_z^2 v \leq 0$, we derive $v(x) \leq 2v(x-z)- v(x-2z) \leq 2v(x-z)+C$ for $x-z \in B_{R}$, where $C>0$ depends on $A$. This yields 
\begin{equation}\label{eq:compare m}
M_{3R/2}\leq 2M_{R}+C\quad\mbox{for all large }R.
\end{equation}

For sufficiently large $R$, let us denote
\[
H_R(x) = \frac{M_{CR}-v(CRx)}{M_{CR}} \geq 0 \quad \text{in } B_1.
\] 
Let us note that the inequalities
\[
L_w v \leq 0 \quad \text{and} \quad L_u v \geq 0
\]
hold in the sense of Definition \ref{def:approximated solution}. This is verified by the approximating sequences constructed in Remark \ref{rem:approximated construction}, where the approximants of $v = u - w$ are inherited directly from the respective approximations of the potentials $w$ and $u$.
By applying the Harnack inequality (Theorem \ref{thm:harnack}) to the rescaled function  $\phi=\frac{u(CRx)}{R^2}$ and $\tilde{\phi}=\frac{w(CRx)}{R^2}$, we conclude that either:
\begin{equation}\label{eq:HR bounded}
\sup_{B_{1/4} } H_R  \leq C(n)\inf_{B_{1/4}}H_R ,
\end{equation}
or the exponential growth behavior that
\[
\sup_{B_{1/2} } H_R \geq   e^{\beta(n) R^{\frac{1}{6}}}\sup_{B_{1/4}}H_R \geq e^{\beta(n) R^{\frac{1}{6}}}H_R (0)=e^{\beta(n) R^{\frac{1}{6}}}.
\]
When $R$ is large, the second scenario contradicts the known polynomial growth bound $|v(x)| \leq 8|x|^2$ for large $x$.
Thus, we conclude for large $R$ that
\[
0 \leq H_R \leq C(n) \quad \text{in } B_{1/4}.  
\]
Moreover, applying Proposition \ref{prop:holder almost}, we obtain
\[
|H_R(x) - H_R(y)| \leq C\left( |x - y|^{\alpha} + R^{-\frac{\alpha}{4}}\right), \quad \forall\ x,y \in B_{1/8}.
\] 
Using the Arzel\`a-Ascoli theorem, there exists a sequence $\{R_j\}$ with $R_j\to\infty$ that  that $H_{R_j}$ converges uniformly to a H\"older continuous function $H$ in $B_{1/16}$ with 
$H(0)=1$.

From \eqref{eq:v convex midpoint} on $v$, we have 
\[
\Delta_z^2 H_R \geq 0,\quad \forall\ z \in \Gamma_{\{(CR)^{-1}\varepsilon_1, \cdots, (CR)^{-1}\varepsilon_n\}}.
\]
This, combined with the uniform convergence, implies that $H$ is convex along the directions $\{\varepsilon_1, \cdots, \varepsilon_n\}$, and thus $H$ is a convex function.
Now, let $\ell(x)=p \cdot x+1$ be a supporting function of $H$ at $0$, for some $p \in \R^n$, that is,
\[
H(x) \geq \ell(x)=p \cdot x +1 \quad \text{in } B_{1/16}.
\]
We claim that 
\begin{equation}\label{eq:Hlinear}
H(x) = \ell(x) \quad \text{in } B_{1/64}.
\end{equation}
Indeed, by the uniform convergence, we may assume along a subsequence $R_j$ of $R$ (still denoted by $R$) that $\|H_R-H\|_{L^{\infty}(B_{1/16})} \leq \delta_R $, where $\delta_R \to 0$ as $R \to \infty$. Thus,
\[
H_R(x) \geq \ell(x)-\delta_R\quad \text{in } B_{1/16} .
\]
Applying the Harnack inequality (Theorem \ref{thm:harnack}) to the nonnegative function $H_R-\ell(x)+\delta_R$ in $B_{1/16}$ now yields
\[
\sup_{B_{1/64}} \left(H_R-\ell+\delta_R \right)\leq  C\delta_R+e^{-\beta(n) R^{\frac{1}{6}}} \sup_{B_{1/32}}  \left(H_R-\ell +\delta_R\right) \leq  C(n)(\delta_R+e^{-\beta(n) R^{\frac{1}{6}}}).
\]
This implies \eqref{eq:Hlinear} by sending $R\to\infty$.

Recalling \eqref{eq:1/2}, we have
\[
H_R\left(z\right) \geq H_R(0)=1,\quad \forall  z \in \Gamma_{\{(CR)^{-1}, \cdots, (CR)^{-1}\varepsilon_n\}}.
\]
In particular, it follows that $H(t \varepsilon_i) \geq H(0)= 1$ for $|t \varepsilon_i| \leq \frac{1}{64}$, which, together with \eqref{eq:Hlinear}, implies
\[
H \equiv 1 \quad \text{in } B_{1/64}.
\]
However, by \eqref{eq:compare m}, we find
\[
\inf_{B_{1/64}} H_R= \frac{M_{CR}-M_{CR/32}}{M_{CR}} \leq 1-c <1
\]
for a constant $c > 0$ independent of sufficiently large $R$. Hence, $\inf_{B_{1/64}} H \leq 1 - c$, which  contradicts to the previously established fact that $H \equiv 1$ in $B_{1/64}$.

\textbf{Step 2.} 
We now prove that $u$ must be a periodic solution in the sense of Definition \ref{def:periodic solution} and is unique up to an additive constant.

After adjusting by constants, we may assume the function $\tilde{v}:=-v=w-u$ satisfies 
\[
\tilde{v} \geq 0 \quad \text{and } \inf_{\R^n}\tilde{v}= 0.
\]
We prove $\tilde{v} \equiv 0$ by contradiction. Assuming $\tilde{v} \not\equiv 0$, say  $\tilde{v}(x_0) > 0$.
For sufficiently large $R$, we apply the Harnack inequality (Theorem \ref{thm:harnack}) to the rescaled function $\tilde{v}_R(x) := (C(n)R)^{-2}\tilde{v}(C(n)Rx)$ and obtain that either:
\begin{equation}\label{eq:v<C/j}
\sup_{B_{R/4} } \tilde{v}  \leq C(n)\inf_{B_{R/4}}\tilde{v},
\end{equation}
or the exponential growth behavior that
\[
\sup_{B_{R/2} } \tilde{v} \geq   e^{\beta(n) R^{\frac{1}{6}}}\sup_{B_{R/4}}\tilde{v} \geq e^{\beta(n) R^{\frac{1}{6}}}v(x_0).
\]
When $R$ is large, the second scenario contradicts the known polynomial growth bound $\tilde{v}(x) \leq 8|x|^2$ for large $x$.
Thus, we conclude from \eqref{eq:v<C/j} that
\[
\tilde{v}(x) \leq C \lim_{R\to \infty}\inf_{B_{R/4}} \tilde{v} = C \inf_{\R^n} \tilde{v}= 0 \quad \text{for all } x,
\]
which completes the proof.
\end{proof}

\section{An example: the Dirac comb}\label{sec:periodicdelta}
This section is devoted to a detailed analysis of the periodic Dirac measure case introduced in Example \ref{ex:periodicdelta}. We study global convex solutions of the Monge-Amp\`ere equation
\begin{equation}\label{eq:periodicdelta}
\det D^2 u =\sum_{z \in \mathbb{Z}^n} \delta_z  \quad \text{in } \mathbb{R}^n, 
\end{equation}
where the right-hand side is the $\mathbb{Z}^n$-periodic Dirac measure supported on the integer lattice.  

By Theorems~\ref{thm:uniqueness within periodic} and~\ref{thm:solutions are periodic}, any solution $u$
admits the decomposition
\[
u(x)=v(x)+\frac{1}{2} x^{\top}Ax+b\cdot x+c,
\]
where $v$ is $\mathbb{Z}^n$-periodic, $A\in\mathcal{S}_+^{n\times n}$ with $\det A=1$, $b\in\mathbb{R}^n$,
and $c\in\mathbb{R}$. Consequently, it suffices to construct and analyze solutions in this periodic form.

For simplicity, we assume below that $b=c=0$ and $u(0)=v(0)=0$. Then
\[
u(z)= \frac{1}{2}z^{\top}Az \quad \text{for all } z \in \mathbb{Z}^n.
\]

In one dimension, we have $A=1$, and 
\[
u(z+y)=\frac{1}{2} z^2 + \left(z+\frac{1}{2}\right)\cdot y,  \quad \text{for all } z \in \mathbb{Z}^n, \ y\in [0,1].
\] 
And  its subdifferential at each $z \in \mathbb{Z}^n$ satisfies $\partial u(z)=[z-\frac{1}{2},z+\frac{1}{2}]$.

In higher dimensions, the explicit form of $u$ depends on $A$ and is, in general, considerably more complicated.
We observe that the subdifferentials $\{\partial u(z)\}_{z\in\mathbb{Z}^n}$ are convex bodies and that they form a tiling of $\R^n$:
they cover $\R^n$, and distinct tiles have disjoint interiors.
Moreover, by Theorem \ref{thm:solutions are periodic}, any two tiles $\partial u(z)$ and $\partial u(\tilde z)$ are translates of each other.
Such convex bodies are necessarily centrally symmetric convex polytopes, usually referred to as \emph{parallelohedra}.

The systematic study of parallelohedra goes back to Fedorov \cite{fedorov1885elements}, Minkowski \cite{minkowski1897allgemeine}, Voronoi \cite{voronoi1908nouvelles}, and Delaunay (Delone) \cite{delaunay1929partition}. The seminal works of Minkowski \cite{minkowski1910geometrie} and Voronoi \cite{voronoi1908nouvelles} connected parallelohedra with the geometry of quadratic forms. 
In subsequent work, Venkov \cite{venkov1954aclass} and McMullen \cite{mcmullen1980convex} established structural criteria for a convex polytope to be a parallelohedron, and proved that any convex polytope which tiles space by translation necessarily does so in a face-to-face manner.

Among parallelohedra, a distinguished special case arises from lattices: the Dirichlet-Voronoi polytope, defined as follows.

\begin{Definition}[Dirichlet-Voronoi polytope]
Let $n\ge 1$, and $\Gamma$ be an $n$-dimensional lattice.
The \emph{Dirichlet-Voronoi polytope} at $\xi\in\Gamma$ is denoted by $\operatorname{Vor}_{\Gamma}(\xi)$ and consists of all points $x\in\mathbb{R}^n$ that are closer to $\xi$ than to any other lattice point in $\Gamma$; that is,
\[
\operatorname{Vor}_{\Gamma}(\xi)
=  \{
x \in \R^n :\; |x-\xi|\le |x-\tilde{\xi}|, \ \forall  \tilde{\xi}\in \Gamma  \}.
\] 
Each $\operatorname{Vor}_{\Gamma}(\xi)$ is a convex polytope satisfying $|\operatorname{Vor}_{\Gamma}(\xi)| = |D(\Gamma)|$ and is a translate of $\operatorname{Vor}_{\Gamma}(0)$, namely
$\operatorname{Vor}_{\Gamma}(\xi)=\operatorname{Vor}_{\Gamma}(0)+\xi$.
Moreover, the family $\{\operatorname{Vor}_{\Gamma}(\xi)\}_{\xi\in\Gamma}$ tiles $\mathbb{R}^n$.
We denote by $\operatorname{vert}(\operatorname{Vor}_{\Gamma})$ the set of all vertices of the Voronoi tiling, i.e.,
\[
\operatorname{vert}(\operatorname{Vor}_{\Gamma})
=\bigcup_{\xi\in\Gamma}\operatorname{vert}\bigl(\operatorname{Vor}_{\Gamma}(\xi)\bigr),
\]
where $\operatorname{vert}(P)$ denotes the vertex set of a polytope $P$.
\end{Definition} 

A complete classification of parallelohedra is highly non-trivial.
One of the central conjectures in the theory is the Voronoi conjecture, posed by Voronoi in 1909, which relates this classification to Dirichlet-Voronoi polytopes of $n$-dimensional lattices.

\begin{conjecture}[Voronoi]
For every $n$-dimensional parallelohedron $P$, there exists a $n$-dimensional lattice $\Gamma$ and an affine transformation $A$ such that $A(P)$ is the Dirichlet-Voronoi polytope of $\Gamma$.
\end{conjecture}

The conjecture was proved by Voronoi \cite{voronoi1908nouvelles} for $n=2,3$, by Delaunay \cite{delaunay1929partition} for $n=4$, and more recently by Garber \cite{garber2025voronoi} for $n=5$.
In particular, the 110{,}244 five-dimensional Dirichlet-Voronoi polytopes enumerated in \cite{sikiric2016complete} constitute the complete list of five dimensional parallelohedra.

This Voronoi viewpoint  allows us to describe the tiling $\{\partial u(z)\}_{z\in\mathbb{Z}^n}$ for every solution $u$ of \eqref{eq:periodicdelta}.

\begin{Theorem}\label{thm:periodicdelta-tiling}
Let $u$ be a global convex solution of \eqref{eq:periodicdelta}. Let $A$ be as in \eqref{eq:periodic solution} and set $\Gamma:=A^{\frac{1}{2}}\mathbb{Z}^n$. 
Then the family $\{\partial u(z)\}_{z\in\mathbb{Z}^n}$ forms a tiling of $\mathbb{R}^n$ that is periodic under translations by $A\mathbb{Z}^n$, and each tile is a translate of $A^{\frac{1}{2}}\operatorname{Vor}_{\Gamma}(0)$.
\end{Theorem}

To obtain an explicit formula for $u$, we shall introduce the Delaunay decomposition associated with $\Gamma$.

\begin{Definition}[Delaunay decomposition]\label{def:delaunay}
Let $\Gamma\subset\mathbb{R}^n$ be an $n$-dimensional lattice.
A subset $S\subset \Gamma$ is called a \emph{Delaunay set} if there exists a (closed) ball $B$ such that
\[
S=\Gamma\cap \partial B
\quad\text{and}\quad
\operatorname{int}(B)\cap \Gamma=\emptyset.
\]
The \emph{Delaunay decomposition} (also called the \emph{Delaunay tessellation}) of $\Gamma$ is the collection
\[
\operatorname{Del}_\Gamma:=\{\operatorname{conv}(S): S\subset \Gamma \text{ is a Delaunay set}\},
\]
where $\operatorname{conv}(\cdot)$ denotes the convex hull.
Elements of $\operatorname{Del}_\Gamma$ are called \emph{Delaunay cells}.
\end{Definition}

The following duality originates from Delaunay's foundational work \cite{delaunay1934sphere}.
\begin{Proposition}[Voronoi-Delaunay duality]\label{rem:voronoi-delaunay-duality}
The Delaunay decomposition is dual to the Dirichlet-Voronoi decomposition of $\Gamma$:
a set $\{\xi_0,\dots,\xi_m\}\subset\Gamma$ defines a $k$-dimensional Delaunay cell if and only if   $\operatorname{Vor}_\Gamma(\xi_0),\dots,\operatorname{Vor}_\Gamma(\xi_m)$ intersect in a common$(n-k)$-dimensional face, and no other $\operatorname{Vor}_\Gamma(\xi)$ contains this intersection.
 
In particular, each vertex $p$ of the Voronoi tiling determines a Delaunay cell
\[
\operatorname{Del}_\Gamma(p):=\operatorname{conv}\{\xi\in\Gamma:\ p\in \operatorname{Vor}_\Gamma(\xi)\}.
\]
And we have $\Gamma\cap\partial B=\{\xi\in\Gamma:\ p\in \operatorname{Vor}_\Gamma(\xi)\}$ for some nonempty ball $B$ centered at $p$.
\end{Proposition}

In two dimensions, the Dirichlet-Voronoi polytope is either a centrally symmetric parallelogram or a centrally symmetric hexagon \cite{voronoi1908nouvelles}.

\begin{figure}[H]
    \centering
    \begin{minipage}{0.45\textwidth}
        \centering
\includegraphics[trim = 10mm 10mm 10mm 20mm, clip, width=6.8cm]{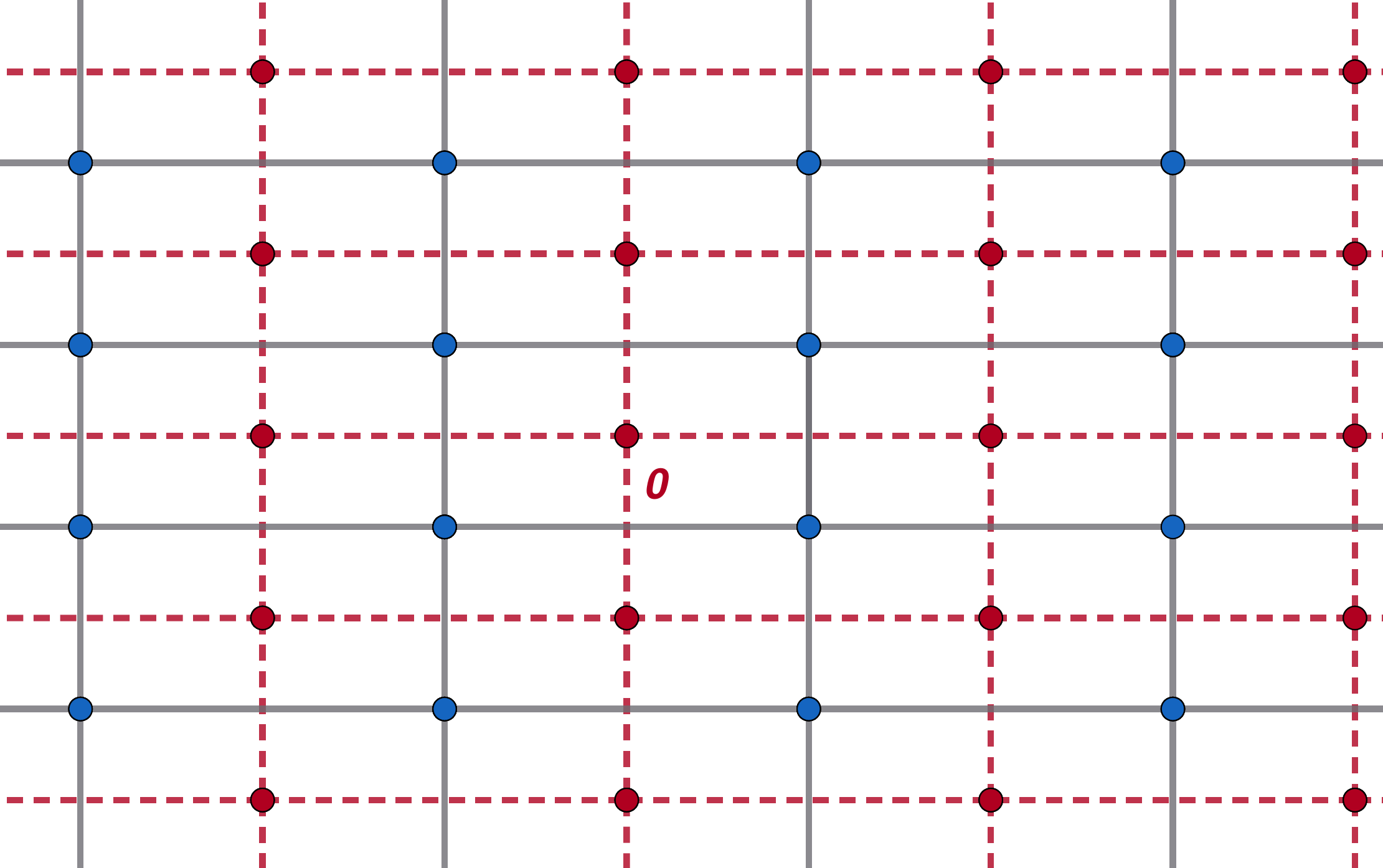}
%    \caption{Parallelogram}
        \label{fig:B2_left}
    \end{minipage}
    \hfill
    \begin{minipage}{0.45\textwidth}
        \centering
\includegraphics[trim = 20mm 10mm 10mm 20mm, clip, width=6.8cm]{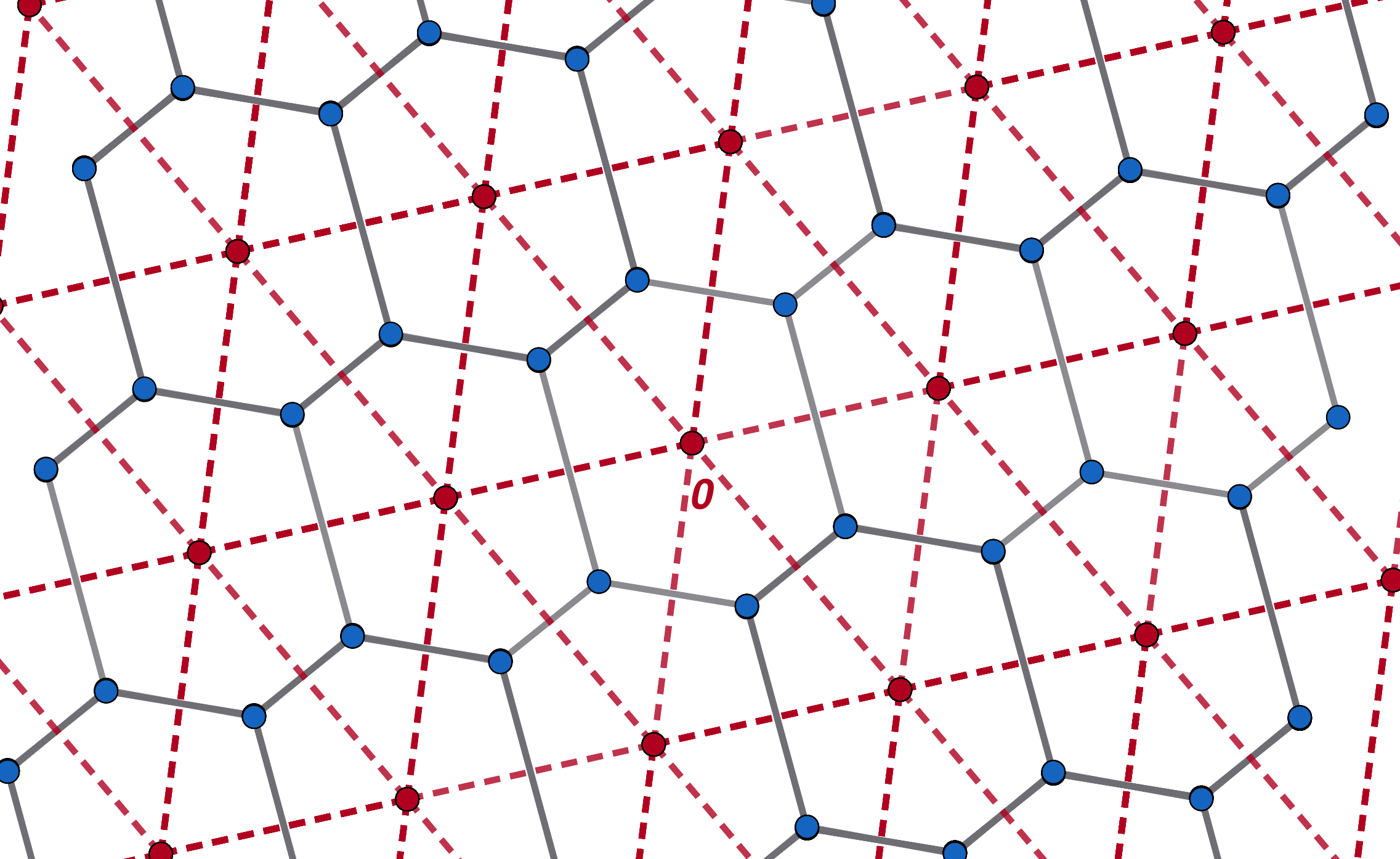} 
%    \caption{Hexagon}
        \label{fig:B2_right}
    \end{minipage}
\caption{The Voronoi-Delaunay duality: the gray edges depict the location of Dirichlet-Voronoi polytopes, and the red edges depict the Delaunay decomposition of lattices.}
   \label{fig:both}
\end{figure}

\begin{Theorem}\label{thm:periodicdelta-formula}
Let $A\in\mathcal{S}_+^{n\times n}$ satisfy $\det A=1$, and set $\Gamma:=A^{\frac{1}{2}}\mathbb{Z}^n$.
For each $p\in\operatorname{vert}(\operatorname{Vor}_{\Gamma})$ and each $x\in\operatorname{Del}_{\Gamma}(p)$, define
\begin{equation}\label{eq:defudirac}
w(x):=\frac{1}{2}|\xi|^2+p\cdot (x-\xi),
\end{equation}
where $\xi\in\Gamma$ is any point such that $p\in \operatorname{Vor}_{\Gamma}(\xi)$.
Here, the admissible $\xi$ are precisely the vertices of $\operatorname{Del}_{\Gamma}(p)$, and the right-hand side of \eqref{eq:defudirac} is independent of the choice of $\xi$.

Then, up to an additive constant, the function $u(x):=w(A^{\frac{1}{2}}x)$ is the unique global convex solution of \eqref{eq:periodicdelta} with quadratic part $\frac{1}{2} x^{\top}Ax$.
\end{Theorem}

\begin{proof}[Proof of Theorems \ref{thm:periodicdelta-tiling} and \ref{thm:periodicdelta-formula}] 
For $\xi\in\Gamma$ and $x\in\operatorname{Vor}_{\Gamma}(\xi)$, we note from the definition of Dirichlet-Voronoi polytope that
\begin{align*}
\frac{1}{2}|\xi|^2+\xi \cdot (x-\xi)
=\sup_{\tilde{\xi}\in \Gamma} \left(\frac{1}{2}|\tilde{\xi}|^2+\tilde{\xi} \cdot (x-\tilde{\xi})\right).
\end{align*}
Motivated by this, we define
\[ 
w^*(x)=\sup_{\tilde{\xi}\in \Gamma} \left(\frac{1}{2}|\tilde{\xi}|^2+\tilde{\xi} \cdot (x-\xi)\right)= \frac{1}{2}|\xi|^2+\xi \cdot (x-\xi), \quad  \xi \in \Gamma, \ x \in \operatorname{Vor}_{\Gamma}(\xi).
\]
Then $w^*$ is the supremum of a family of affine functions; in particular, $w^*$ is a well-defined convex function and is super-linear at infinity. 

Let us consider the Legendre transform $w^{**}$ of $w^*$. Then $w^{**}$ is a well-defined convex function on $\mathbb{R}^n$, and we have
\[
w^{**}(q)=q\cdot x-w^*(x)\quad \text{ if }q\in\partial w^*(x).
\]
Since $w^*$ is affine on each Dirichlet-Voronoi polytope $\operatorname{Vor}_{\Gamma}(\xi)$, 
\[
\partial w^*(x)=\{\xi\}
\quad \text{for all }\ x\in \operatorname{int}\bigl(\operatorname{Vor}_{\Gamma}(\xi)).
\]
More generally, the subdifferential of $w^*$ at $x\in \R^n$ is
\[
\partial w^*(x)=\operatorname{conv}\{\xi\in \Gamma:\; x\in \operatorname{Vor}_{\Gamma}(\xi)\},
\quad x\in\R^n.
\]  
In particular, for $\xi\in\Gamma$ and  $p\in \operatorname{vert}(\operatorname{Vor}_{\Gamma}(\xi))$, the Voronoi-Delaunay duality implies that
\[
\partial w^*(p)=\operatorname{Del}_{\Gamma}(p),
\]
and thus, for each $q\in \operatorname{Del}_{\Gamma}(p)=\partial w^*(p)$, the Legendre transform gives
\[
w^{**}(q)= q\cdot p-w^*(p)= q\cdot p-\left(\frac12|\xi|^2+\xi\cdot(p-\xi)\right) =\frac{1}{2}|\xi|^2+p\cdot(q-\xi).
\]
Since the cells $\{\operatorname{Del}_{\Gamma}(p)\}_{p\in \operatorname{vert}(\operatorname{Vor}_{\Gamma}(\Gamma))}$ form a decomposition of $\mathbb{R}^n$,
$w^{**}$ agrees with the piecewise-affine function $w$ defined in \eqref{eq:defudirac}  on each cell and hence everywhere. In conclude, 
\[
w\equiv w^{**},
\]
and its value at a given $q$ is independent of the choice of $\xi$ (equivalently, of the choice of a vertex $p$ with $q\in\operatorname{Del}_{\Gamma}(p)$).

Finally, since $q\in \partial w^{*}(x)$ if and only if $x\in \partial w^{**}(q)$, and $\xi \in \partial w^*(x)$ if and only if $x \in \operatorname{Vor}_{\Gamma}(\xi)$, we obtain 
\[
\partial w^{**}(\xi)=\{ x:\; \xi
\in \partial w^{*}(x) \}=\operatorname{Vor}_{\Gamma}(\xi)\quad\text{ for all }\xi \in \Gamma.
\]
This implies 
\[
\det D^2 w^{**}= \sum_{\xi \in \Gamma} \delta_{\xi}.
\]
Therefore, $w^{**}(A^{\frac{1}{2}}x)$ solves \eqref{eq:periodicdelta} in the Alexandrov sense. 
Moreover, it is straightforward to verify that the function  $w^{**}(x)-\frac{1}{2}|x|^2$ is bounded on $\mathbb{R}^n$. Hence, by Theorems \ref{thm:uniqueness within periodic} and \ref{thm:solutions are periodic}, we deduce that  $w^{**}(A^{\frac{1}{2}}x)$
is the unique global convex solution of \eqref{eq:periodicdelta} with quadratic part $\frac12\,x^{\top}Ax$, up to an additive constant. Furthermore, every global convex solution $u$ of~\eqref{eq:periodicdelta} satisfies 
\[
u(x)=w^{**}(A^{\frac{1}{2}}x)+bx+c,
\]
for some $b\in \R^n$ and $c\in \R$, and that $\{\partial u(z)\}_{z\in \mathbb{Z}^n}=\{A^{\frac{1}{2}}\operatorname{Vor}_{\Gamma}(A^{\frac{1}{2}}z)+b\}_{z\in \mathbb{Z}^n }$ forms a tiling of $\mathbb{R}^n$ that is periodic under translations by $A\mathbb{Z}^n$.
\end{proof}

Based on Theorem \ref{thm:periodicdelta-formula}, we obtain an explicit formula for the solution by computing the associated Dirichlet-Voronoi polytope. In general, the computation requires identifying the relevant lattice points in $\Gamma(=A^{\frac{1}{2}}\mathbb{Z}^n)$ that determine the supporting half-spaces (equivalently, the facet-defining inequalities) of the polytope $\operatorname{Vor}_{\Gamma}(0)$. Below, we consider two special cases: when $A$ is a scalar dilation, and when the eigenvalues of $A$ are close to $1$ in two dimensions.

\begin{ex}
In the case $A=\operatorname{diag}\{ \lambda_1,\cdots ,\lambda_n\} \in \mathcal{S}_+^{n\times n}$, the function
\[
u(z+y)=\frac{1}{2}z^{\top}Az+z^{\top}Ay+\frac{1}{2}\sum_{i=1}^{n} Ay_i
\quad \text{for all } z\in \mathbb{Z}^n,\ \ y\in[0,1]^n
\]
is a solution of \eqref{eq:periodicdelta}. Moreover, the sets $\partial u(z) =A \left(\{z\} +[-\frac{1}{2},\frac{1}{2}]^n\right) , z\in \mathbb{Z}^n$  form a tiling of $\R^n$ under the translation by $ A\mathbb{Z}^n $.
\end{ex}

\begin{ex}
In the case of $n=2$ and 
\[ 
A=
\begin{pmatrix}
\cos \theta & -\sin \theta\\
\sin \theta & \cos \theta 
\end{pmatrix}
\begin{pmatrix}
5/4 & 0 \\
0 & 4/5
\end{pmatrix} 
\begin{pmatrix}
\cos \theta & \sin \theta\\
-\sin \theta & \cos \theta 
\end{pmatrix}
= \frac{41}{40} I_2 
+ \frac{9}{40} 
\begin{pmatrix} 
\cos 2\theta & \sin 2\theta \\ 
\sin 2\theta & -\cos 2\theta 
\end{pmatrix},
\] 
let $\{p_i\}$ be the set of the vertices of the polygon 
\[
\begin{split}
&A^{\frac{1}{2}} \operatorname{Vor}_{A^{\frac{1}{2}}\mathbb{Z}^2}(0)\\
&=  \{
x \in \R^2 :\;  x \cdot z\le \frac{1}{2}z^{\top}Az  , \ \forall  z\in \mathbb{Z}^2  \}\\
&=\left\{y\in \R^2 :\; |y_1|\leq \frac{41-9\cos 2\theta}{80} ,\ |y_2|\leq \frac{41+9\cos 2\theta}{80},\ |y_1\pm y_2|\leq \frac{41+9\sin 2\theta}{40}  \right\}.
\end{split}
\]
Then, the function
\[
u(z+y)=\frac{1}{2}z^{\top}Az+  z^{\top}A y+  \max_i   p_i \cdot y \quad \text{for all } z \in \mathbb{Z}^2, \ y \in [-1,1]^2
\]
is a solution of \eqref{eq:periodicdelta}.  Moreover, $\partial u(0) =  A^{\frac{1}{2}} \operatorname{Vor}_{A^{\frac{1}{2}}\mathbb{Z}^2}(0)$ and $\{\partial u(z)\}_{z\in \mathbb{Z}^2}=\{\partial u(0)+A z: z\in \mathbb{Z}^2\} $ form a tiling of $\R^2$ under the translation in $A\mathbb{Z}^2 $.
\end{ex}

\appendix

\section{A strong maximum principle}
The following is a result in Caffarelli-Li-Nirenberg \cite{CLN}, see Theorem 3.1 there. We make some improvement on exposition. Let $\mathcal{S}^{n\times n}$ denote the set of symmetric $n\times n$ matrices.

\begin{Theorem}[Theorem 3.1 in \cite{CLN}]\label{thm:strong maximum principle}
Let $F\in C^{0}\big(B_1\times \R\times \R^{n}\times \mathcal{S}^{n\times n}\big)$,
$v\in C^{2}\big(B_1\big)$, and $u$ is lower semicontinuous in $B_1$.
Assume that $u(0)=v(0)$, $u\ge v$ near $0$, the nonlinear operator $F(x,s,p,M)$ is $C^{1}$ in $(s,p,M)$ near
$\big(0, v(0), \nabla v(0), \nabla^{2}v(0)\big)$,
\[
F\big(x,u,\nabla u,\nabla^{2}u\big)\le
F\big(x,v,\nabla v,\nabla^{2}v\big)
\quad\text{in the viscosity sense near }0,
\]
and
\begin{equation}\label{eq:posdef}
\left(\frac{\partial F}{\partial M_{ij}}\right)
\text{ is positive definite near }
\big(0, v(0), \nabla v(0), \nabla^{2}v(0)\big).
\end{equation}
Then $u\equiv v$ near $0$.
\end{Theorem}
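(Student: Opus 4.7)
The strategy is a Hopf-type barrier argument, with the subtlety that the nonlinear viscosity inequality must be linearized using the $C^1$ regularity of $F$ in the $(s,p,M)$-variables. I would argue by contradiction: assume $u \not\equiv v$ in every neighborhood of $0$. Setting $w := u - v$, I obtain that $w \ge 0$ near $0$, $w$ is lower semicontinuous, and $w(0) = 0$. Since $\{w > 0\}$ is open by lower semicontinuity and accumulates at $0$, I can choose $x_0 \in \{w > 0\}$ arbitrarily close to $0$ and put $\rho := \mathrm{dist}(x_0,\{w = 0\})$, obtaining $B_{\rho}(x_0) \subset \{w > 0\}$ together with a boundary point $y_0 \in \partial B_{\rho}(x_0)$ satisfying $w(y_0) = 0$ (the existence of $y_0$ uses lower semicontinuity again). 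Since $\rho$ is small, both $x_0$ and $y_0$ lie in the neighborhood of $0$ where $F$ is $C^1$ in $(s,p,M)$ and $\partial F/\partial M$ is positive definite.

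On $B_\rho(x_0)$ I would introduce the classical Hopf barrier $\eta(x) = e^{-\alpha|x - x_0|^2} - e^{-\alpha\rho^2}$, positive inside and vanishing on $\partial B_\rho(x_0)$. Using \eqref{eq:posdef}, a direct computation shows that for $\alpha$ sufficiently large,
\[
L\eta(x) := F_{p_i}(x,v,\nabla v,\nabla^2 v)\,\eta_i(x) + F_{M_{ij}}(x,v,\nabla v,\nabla^2 v)\,\eta_{ij}(x) \ge c_\alpha > 0
\]
throughout the annular region $B_{\rho}(x_0) \setminus B_{\rho/2}(x_0)$, with $c_\alpha \to \infty$ as $\alpha \to \infty$.

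Next I would establish the comparison $w \ge \epsilon\eta$ on the closed annulus for small $\epsilon > 0$. Lower semicontinuity supplies $\delta > 0$ with $w \ge \delta$ on $\partial B_{\rho/2}(x_0)$, so for $\epsilon < \delta/\|\eta\|_\infty$ the inequality holds on both boundary spheres. If it failed in the interior, $w - \epsilon\eta$ would attain a negative minimum $c_0 < 0$ at some $y_\ast$ in the open annulus, making $\varphi := v + \epsilon\eta + c_0$ a smooth function touching $u$ from below at $y_\ast$. The viscosity inequality $F[u] \le F[v]$ at $y_\ast$, combined with a fundamental-theorem-of-calculus expansion of $F$ in $(s,p,M)$, yields
\[
\epsilon L\eta(y_\ast) + F_s(y_\ast,\ldots)\,(\epsilon\eta(y_\ast) + c_0) + o(\epsilon) \le 0.
\]
Since $|\epsilon\eta(y_\ast) + c_0| \le \epsilon\|\eta\|_\infty$ and $F_s$ is bounded, the middle term is $O(\epsilon)$ with constant independent of $\alpha$. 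Choosing $\alpha$ large so that $c_\alpha$ exceeds this constant produces a contradiction for all sufficiently small $\epsilon$.

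Knowing $w \ge \epsilon\eta$ on the closed annulus, I apply this one more time at $y_0$: since $w(y_0) = 0 = \epsilon\eta(y_0)$, the function $v + \epsilon\eta$ touches $u$ from below at $y_0$. The viscosity inequality with the same Taylor expansion (now with $\eta(y_0) = 0$, so the $F_s$ contribution vanishes to leading order), divided by $\epsilon$ and with $\epsilon \to 0^+$, gives $L\eta(y_0) \le 0$, contradicting $L\eta(y_0) \ge c_\alpha > 0$. Hence $u \equiv v$ near $0$. The main technical obstacle is executing the Taylor expansion cleanly within the viscosity framework: since $u$ is only lower semicontinuous, there is no a priori global linearization of $F$ along $w$, and I would instead insert only small barriers $v + \epsilon\eta$ (or $v + \epsilon\eta + c_0$ with $|c_0| = O(\epsilon)$), so that the perturbations in $(s,p,M)$-space stay within the $C^1$-region of $F$ and the remainder is genuinely $o(\epsilon)$, while the principal term $\epsilon L\eta$ is arranged to be of order $\epsilon$ and strictly positive via the Hopf-barrier choice of $\alpha$.
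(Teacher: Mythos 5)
First, a remark on scope: the paper does not prove this statement at all --- it is imported verbatim from Caffarelli--Li--Nirenberg \cite{CLN} (Theorem 3.1 there), so there is no in-paper proof to compare against. On its own merits, your proposal follows the standard Hopf-lemma scheme for viscosity solutions, linearizing $F$ only along the smooth comparison functions $v+\epsilon\eta$ rather than along $u$ itself, and its architecture (closedness of $\{w=0\}$ by lower semicontinuity, the touching ball $B_\rho(x_0)$ with $\rho\le|x_0|$ small so that all evaluation points stay in the $C^1$-neighborhood of $(0,v(0),\nabla v(0),\nabla^2 v(0))$, the annular barrier, the comparison $w\ge\epsilon\eta$, and the final touching at $y_0$ where the $F_s$-increment vanishes because $u(y_0)=v(y_0)$) is sound and is essentially the argument of \cite{CLN}.

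There is, however, one false claim on which your absorption of the zeroth-order term rests: that $c_\alpha\to\infty$ as $\alpha\to\infty$. Since $L\eta$ carries the factor $e^{-\alpha|x-x_0|^2}$, on the outer sphere $|x-x_0|=\rho$ one has $L\eta\le C\alpha^2\rho^2 e^{-\alpha\rho^2}$, so $c_\alpha=\min_{B_\rho(x_0)\setminus B_{\rho/2}(x_0)}L\eta\to 0$; consequently the step ``choose $\alpha$ large so that $c_\alpha$ exceeds this ($\alpha$-independent) constant'' cannot be executed as written, and without it the comparison $w\ge\epsilon\eta$ --- hence the whole argument --- collapses. The repair is standard and should be done pointwise rather than in sup-norm: at the interior negative minimum $y_*$ one has $0\le w(y_*)=\epsilon\eta(y_*)+c_0<\epsilon\eta(y_*)$, so the offending term is bounded by $\epsilon\,\sup|F_s|\,\eta(y_*)$, and it suffices to arrange $L\eta-\sup|F_s|\,\eta>0$ throughout the annulus. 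This holds for every sufficiently large fixed $\alpha$, because $L\eta(x)-\sup|F_s|\,\eta(x)\ge e^{-\alpha|x-x_0|^2}\bigl(\lambda\alpha^2\rho^2-C\alpha-\sup|F_s|\bigr)$ with $\lambda>0$ the uniform ellipticity constant furnished by the positive definiteness hypothesis. (Alternatively, one can keep your sup-norm comparison but shrink $\rho$, which is permissible since $\rho\le|x_0|$ and $x_0$ may be taken arbitrarily close to $0$.) With this correction the proof is complete.
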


\begin{Remark}
It is easy to see from the proof of Theorem \ref{thm:strong maximum principle} that the $C^{1}$
regularity of $F$ can be weakened to Lipschitz regularity, and
\eqref{eq:posdef} can be weakened to that
\[
F(x,s,p,M+N)\ge F(x,s,p,M)+\epsilon_{1} \|N\|
\]
holds for $N\in \mathcal{S}^{n\times n}_+$ satisfying $\|N\|\le\epsilon_1$ and all $|(x,s,p,M)|\le\epsilon_1$ with some positive constant $\epsilon_{1}>0$.
\end{Remark}

\small

% \bib, bibdiv, biblist are defined by the amsrefs package.
\begin{bibdiv}
\begin{biblist}

\bib{avellaneda1989}{article}{
      author={Avellaneda, Marco},
      author={Lin, Fang-Hua},
       title={Un th\'eor\`eme de {L}iouville pour des \'equations elliptiques
  \`a{} coefficients p\'eriodiques},
        date={1989},
        ISSN={0764-4442},
     journal={C. R. Acad. Sci. Paris S\'er. I Math.},
      volume={309},
      number={5},
       pages={245\ndash 250},
      review={\MR{1010728}},
}

\bib{caffarelli1990ilocalization}{article}{
      author={Caffarelli, Luis~A.},
       title={A localization property of viscosity solutions to the
  {M}onge-{A}mp\`ere equation and their strict convexity},
        date={1990},
        ISSN={0003-486X,1939-8980},
     journal={Ann. of Math. (2)},
      volume={131},
      number={1},
       pages={129\ndash 134},
         url={https://doi.org/10.2307/1971509},
      review={\MR{1038359}},
}

\bib{caffarelli1990interiorw2p}{article}{
      author={Caffarelli, Luis~A.},
       title={Interior {$W^{2,p}$} estimates for solutions of the
  {M}onge-{A}mp\`ere equation},
        date={1990},
        ISSN={0003-486X,1939-8980},
     journal={Ann. of Math. (2)},
      volume={131},
      number={1},
       pages={135\ndash 150},
         url={https://doi.org/10.2307/1971510},
      review={\MR{1038360}},
}

\bib{caffarelli1991regularity}{article}{
      author={Caffarelli, Luis~A.},
       title={Some regularity properties of solutions of {M}onge {A}mp\`ere
  equation},
        date={1991},
        ISSN={0010-3640,1097-0312},
     journal={Comm. Pure Appl. Math.},
      volume={44},
      number={8-9},
       pages={965\ndash 969},
         url={https://doi.org/10.1002/cpa.3160440809},
      review={\MR{1127042}},
}

\bib{Caffarelli2001}{article}{
      author={Caffarelli, Luis~A.},
      author={de~la Llave, Rafael},
       title={Planelike minimizers in periodic media},
        date={2001},
        ISSN={0010-3640,1097-0312},
     journal={Comm. Pure Appl. Math.},
      volume={54},
      number={12},
       pages={1403\ndash 1441},
         url={https://doi.org/10.1002/cpa.10008},
      review={\MR{1852978}},
}

\bib{caffarelli1997properties}{article}{
      author={Caffarelli, Luis~A.},
      author={Guti\'errez, Cristian~E.},
       title={Properties of the solutions of the linearized {M}onge-{A}mp\`ere
  equation},
        date={1997},
        ISSN={0002-9327,1080-6377},
     journal={Amer. J. Math.},
      volume={119},
      number={2},
       pages={423\ndash 465},
  url={http://muse.jhu.edu/journals/american_journal_of_mathematics/v119/119.2caffarelli.pdf},
      review={\MR{1439555}},
}

\bib{caffarelli2004liouville}{article}{
      author={Caffarelli, Luis~A.},
      author={Li, Yan~Yan},
       title={A {L}iouville theorem for solutions of the {M}onge-{A}mp\`ere
  equation with periodic data},
        date={2004},
        ISSN={0294-1449,1873-1430},
     journal={Ann. Inst. H. Poincar\'e{} C Anal. Non Lin\'eaire},
      volume={21},
      number={1},
       pages={97\ndash 120},
         url={https://doi.org/10.1016/S0294-1449(03)00039-8},
      review={\MR{2037248}},
}

\bib{CLN}{article}{
      author={Caffarelli, Luis~A.},
      author={Li, Yan~Yan},
      author={Nirenberg, Louis},
       title={Some remarks on singular solutions of nonlinear elliptic
  equations {III}: viscosity solutions including parabolic operators},
        date={2013},
        ISSN={0010-3640,1097-0312},
     journal={Comm. Pure Appl. Math.},
      volume={66},
      number={1},
       pages={109\ndash 143},
         url={https://doi.org/10.1002/cpa.21412},
      review={\MR{2994551}},
}

\bib{caffarelli1986dirichlet}{article}{
      author={Caffarelli, Luis~A.},
      author={Nirenberg, Louis},
      author={Spruck, Joel},
       title={The {D}irichlet problem for the degenerate {M}onge-{A}mp\`ere
  equation},
        date={1986},
        ISSN={0213-2230},
     journal={Rev. Mat. Iberoamericana},
      volume={2},
      number={1-2},
       pages={19\ndash 27},
         url={https://doi.org/10.4171/RMI/23},
      review={\MR{864651}},
}

\bib{calabi1958improper}{article}{
      author={Calabi, Eugenio},
       title={Improper affine hyperspheres of convex type and a generalization
  of a theorem by {K}. {J}\"orgens},
        date={1958},
        ISSN={0026-2285,1945-2365},
     journal={Michigan Math. J.},
      volume={5},
       pages={105\ndash 126},
         url={http://projecteuclid.org/euclid.mmj/1028998055},
      review={\MR{106487}},
}

\bib{delaunay1929partition}{article}{
      author={Delaunay, B.},
       title={Sur la partition r{\'e}guli{\`e}re de l'espace {\`a} 4
  dimensions},
        date={1929},
     journal={Bull. l'Acad. Sci. l'URSS.},
      volume={7},
         url={https://www.mathnet.ru/eng/im5329},
        note={Premi{\`e}re partie: no.~1, pp.~79--110,
  \url{https://www.mathnet.ru/eng/im5329}; Deuxi{\`e}me partie: no.~2,
  pp.~147--164, \url{https://www.mathnet.ru/eng/im5333}.},
}

\bib{delaunay1934sphere}{article}{
      author={Delaunay, B.},
       title={Sur la sph{\`e}re vide. {\`a} la m{\'e}moire de georges
  vorono{\"i}},
        date={1934},
     journal={Bull. l'Acad. Sci. l'URSS. Cl. Sci. Math. Nat.},
      volume={6},
       pages={793\ndash 800},
         url={https://www.mathnet.ru/eng/im4937},
}

\bib{sikiric2016complete}{article}{
      author={Dutour~Sikiri\'c, Mathieu},
      author={Garber, Alexey},
      author={Sch\"urmann, Achill},
      author={Waldmann, Clara},
       title={The complete classification of five-dimensional
  {D}irichlet-{V}oronoi polyhedra of translational lattices},
        date={2016},
        ISSN={0108-7673,2053-2733},
     journal={Acta Crystallogr. Sect. A},
      volume={72},
      number={6},
       pages={673\ndash 683},
         url={https://doi.org/10.1107/s2053273316011682},
      review={\MR{3573502}},
}

\bib{fedorov1885elements}{article}{
      author={Fedorov, E.~S.},
       title={Elements of the study of figures},
    language={Russian},
        date={1885},
     journal={Zap. Mineralog. Obsch.},
}

\bib{figalli2017monge}{book}{
      author={Figalli, Alessio},
       title={The {M}onge-{A}mp\`ere equation and its applications},
      series={Zurich Lectures in Advanced Mathematics},
   publisher={European Mathematical Society (EMS), Z\"urich},
        date={2017},
        ISBN={978-3-03719-170-5},
         url={https://doi.org/10.4171/170},
      review={\MR{3617963}},
}

\bib{garber2025voronoi}{article}{
      author={Garber, Alexey},
       title={Voronoi conjecture for five-dimensional parallelohedra},
        date={2025},
        ISSN={0020-9910,1432-1297},
     journal={Invent. Math.},
      volume={240},
      number={2},
       pages={587\ndash 631},
         url={https://doi.org/10.1007/s00222-025-01325-0},
      review={\MR{4892794}},
}

\bib{gutierrez2016monge}{book}{
      author={Guti\'errez, Cristian~E.},
       title={The {M}onge-{A}mp\`ere equation},
     edition={Second},
      series={Progress in Nonlinear Differential Equations and their
  Applications},
   publisher={Birkh\"auser/Springer, [Cham]},
        date={2016},
      volume={89},
        ISBN={978-3-319-43372-1; 978-3-319-43374-5},
         url={https://doi.org/10.1007/978-3-319-43374-5},
      review={\MR{3560611}},
}

\bib{hedlund1932}{article}{
      author={Hedlund, Gustav~A.},
       title={Geodesics on a two-dimensional {R}iemannian manifold with
  periodic coefficients},
        date={1932},
        ISSN={0003-486X,1939-8980},
     journal={Ann. of Math. (2)},
      volume={33},
      number={4},
       pages={719\ndash 739},
         url={https://doi.org/10.2307/1968215},
      review={\MR{1503086}},
}

\bib{jorgens1954losungen}{article}{
      author={J\"orgens, Konrad},
       title={\"{U}ber die {L}\"osungen der {D}ifferentialgleichung
  {$rt-s^2=1$}},
        date={1954},
        ISSN={0025-5831,1432-1807},
     journal={Math. Ann.},
      volume={127},
       pages={130\ndash 134},
         url={https://doi.org/10.1007/BF01361114},
      review={\MR{62326}},
}

\bib{le2024analysis}{book}{
      author={Le, Nam~Q.},
       title={Analysis of {M}onge-{A}mp\`ere equations},
      series={Graduate Studies in Mathematics},
   publisher={American Mathematical Society, Providence, RI},
        date={[2024] \copyright 2024},
      volume={240},
        ISBN={978-1-4704-7420-1; [9781470476250]; [9781470476243]},
         url={https://doi.org/10.1090/gsm/240},
      review={\MR{4720871}},
}

\bib{liwang2001}{article}{
      author={Li, Peter},
      author={Wang, Jiaping},
       title={Polynomial growth solutions of uniformly elliptic operators of
  non-divergence form},
        date={2001},
        ISSN={0002-9939,1088-6826},
     journal={Proc. Amer. Math. Soc.},
      volume={129},
      number={12},
       pages={3691\ndash 3699},
         url={https://doi.org/10.1090/S0002-9939-01-06167-6},
      review={\MR{1860504}},
}

\bib{li1990existence}{article}{
      author={Li, Yan~Yan},
       title={Some existence results for fully nonlinear elliptic equations of
  {M}onge-{A}mp\`ere type},
        date={1990},
        ISSN={0010-3640,1097-0312},
     journal={Comm. Pure Appl. Math.},
      volume={43},
      number={2},
       pages={233\ndash 271},
         url={https://doi.org/10.1002/cpa.3160430204},
      review={\MR{1038143}},
}

\bib{li2019monge}{article}{
      author={Li, Yan~Yan},
      author={Lu, Siyuan},
       title={Monge-{A}mp\`ere equation with bounded periodic data},
        date={2022},
        ISSN={1672-4070,1573-8175},
     journal={Anal. Theory Appl.},
      volume={38},
      number={2},
       pages={128\ndash 147},
      review={\MR{4468910}},
}

\bib{mcmullen1980convex}{article}{
      author={McMullen, P.},
       title={Convex bodies which tile space by translation},
        date={1980},
        ISSN={0025-5793},
     journal={Mathematika},
      volume={27},
      number={1},
       pages={113\ndash 121},
         url={https://doi.org/10.1112/S0025579300010007},
      review={\MR{582003}},
}

\bib{minkowski1897allgemeine}{article}{
      author={Minkowski, H.},
       title={Allgemeine lehrs{\"a}tze {\"u}ber die convexen polyeder},
        date={1897},
     journal={Nachr. Ges. Wiss. Gottingen, Math.-Phys. Kl.},
       pages={198\ndash 220},
         url={http://www.digizeitschriften.de/dms/img/?PID=GDZPPN002497875},
}

\bib{minkowski1910geometrie}{book}{
      author={Minkowski, H.},
       title={Geometrie der zahlen},
     edition={2},
   publisher={Teubner},
     address={Leipzig},
        date={1910},
}

\bib{Morse1924}{article}{
      author={Morse, Harold~Marston},
       title={A fundamental class of geodesics on any closed surface of genus
  greater than one},
        date={1924},
        ISSN={0002-9947,1088-6850},
     journal={Trans. Amer. Math. Soc.},
      volume={26},
      number={1},
       pages={25\ndash 60},
         url={https://doi.org/10.2307/1989225},
      review={\MR{1501263}},
}

\bib{moser1992}{article}{
      author={Moser, J\"urgen},
      author={Struwe, Michael},
       title={On a {L}iouville-type theorem for linear and nonlinear elliptic
  differential equations on a torus},
        date={1992},
        ISSN={0100-3569},
     journal={Bol. Soc. Brasil. Mat. (N.S.)},
      volume={23},
      number={1-2},
       pages={1\ndash 20},
         url={https://doi.org/10.1007/BF02584809},
      review={\MR{1203171}},
}

\bib{pogorelov1972improper}{article}{
      author={Pogorelov, A.~V.},
       title={On the improper convex affine hyperspheres},
        date={1972},
     journal={Geometriae Dedicata},
      volume={1},
      number={1},
       pages={33\ndash 46},
         url={https://doi.org/10.1007/BF00147379},
      review={\MR{319126}},
}

\bib{venkov1954aclass}{article}{
      author={Venkov, B.~A.},
       title={On a class of {E}uclidean polyhedra},
        date={1954},
     journal={Vestnik Leningrad. Univ. Ser. Mat. Fiz. Him.},
      volume={9},
      number={2},
       pages={11\ndash 31},
      review={\MR{94790}},
}

\bib{voronoi1908nouvelles}{article}{
      author={Voronoi, Georges},
       title={Nouvelles applications des param\`etres continus \`a{} la
  th\'eorie des formes quadratiques. {D}euxi\`eme m\'emoire. {R}echerches sur
  les parall\'ello\`edres primitifs},
        date={1908},
        ISSN={0075-4102,1435-5345},
     journal={J. Reine Angew. Math.},
      volume={134},
       pages={198\ndash 287},
         url={https://doi.org/10.1515/crll.1908.134.198},
      review={\MR{1580754}},
}
\end{biblist}
\end{bibdiv}

\smallskip

\noindent T. Jin

\noindent Department of Mathematics, The Hong Kong University of Science and Technology\\
Clear Water Bay, Kowloon, Hong Kong\\
Email: \textsf{tianlingjin@ust.hk}

\bigskip

\noindent Y.Y. Li

\noindent Department of Mathematics, Rutgers University\\ 
110 Frelinghuysen Road, Piscataway, NJ 08854-8019, USA\\
Email: \textsf{yyli@math.rutgers.edu}

\bigskip

\noindent H.V. Tran

\noindent Department of Mathematics, University of Wisconsin at Madison,\\
Van Vleck Hall, 480 Lincoln Drive, Madison, WI 53706, USA\\
Email: \textsf{hung@math.wisc.edu}

\bigskip

\noindent X. Tu

\noindent Department of Mathematics, The Hong Kong University of Science and Technology\\
Clear Water Bay, Kowloon, Hong Kong\\[1mm]
Email:  \textsf{maxstu@ust.hk}

\end{document}